\newtheorem{thm}{Theorem}
\newtheorem{defn}{Definition}
\newtheorem{prop}{Proposition}
\newtheorem{cor}{Corollary}
\newtheorem{oss}{Remark}
\newcommand{\e}{\epsilon}
\newcommand{\phie}{\varphi_{\epsilon}}
\newcommand{\phij}{\varphi_j}
\newcommand{\ue}{u_{\epsilon}}
\newcommand{\mue}{\mu_{\epsilon}}
\newcommand{\Fe}{F_{\epsilon}}
\newcommand{\Fie}{F_{1\epsilon}}
\newcommand{\me}{m_{\epsilon}}
\newcommand{\ze}{z_{\epsilon}}
\newcommand{\duavg}[1]{\left\langle{#1}\right\rangle}
\begin{document}

\title{A diffuse interface model \\ for two-phase incompressible flows \\ with nonlocal interactions \\ and nonconstant mobility}

\author{
{Sergio Frigeri \thanks{{\color{black} Weierstrass Institute for Applied Analysis and Stochastics, Mohrenstr. 39, D-10117 Berlin,
Germany.
E-mail: \textit{SergioPietro.Frigeri@wias-berlin.de}  The author
is supported by the FP7-IDEAS-ERC-StG
Grant \#256872 (EntroPhase)}}}
\and
{Maurizio Grasselli\thanks{Dipartimento di Matematica,
Politecnico di Milano,
Milano I-20133, Italy.
E-mail: \textit{mau\-ri\-zio.grasselli@polimi.it}}}
\and
{Elisabetta Rocca\thanks{{\color{black}
Weierstrass Institute for Applied Analysis and Stochastics, Mohrenstr. 39, D-10117 Berlin,
Germany.
E-mail: \textit{Elisabetta.Rocca@wias-berlin.de} The author
is supported by the FP7-IDEAS-ERC-StG
Grant \#256872 (EntroPhase)}}}}

\maketitle

\begin{abstract}\noindent
We consider a diffuse interface model for incompressible isothermal mixtures of two immiscible fluids with matched constant densities.
This model consists of the Navier-Stokes system coupled with a convective nonlocal Cahn-Hilliard equation with non-constant mobility. We first prove the existence of a global weak solution in the case of non-degenerate mobilities and regular potentials of polynomial growth. Then we extend the result to degenerate mobilities and singular (e.g. logarithmic)  potentials. In the latter case we also establish the existence of the global attractor in dimension two. Using a similar technique, we show that there is a global attractor for the convective nonlocal Cahn-Hilliard equation with degenerate  mobility  and singular potential in dimension three.
\\
\\
\noindent \textbf{Keywords}: Navier-Stokes equations, nonlocal
Cahn-Hilliard equations, degenerate mobility, incompressible binary
fluids, weak solutions, global attractors.
\\
\\
\textbf{MSC 2010}: 35Q30, 37L30, 45K05, 76D03, 76T99.
\end{abstract}

\section{Introduction}\setcounter{equation}{0}

Model H is a diffuse interface model for incompressible isothermal two-phase flows which consists of
the Navier-Stokes equations for the (averaged) velocity $u$ nonlinearly coupled with a convective Cahn-Hilliard equation for the (relative) concentration difference $\varphi$
(cf., for instance, \cite{AMW,GPV,HMR,HH,JV,Kim2012,LMM}).
The resulting evolution system has been studied by several authors (see, e.g., \cite{A1,A2,B,CG,GG1,GG2,GG3,LS,S,ZWH,ZF} and
references therein,  cf. also \cite{ADT,Bos,KCR,GP} for models with shear dependent viscosity).
In the case of matched densities, setting the constant density equal to one, the system can be written as follows
\begin{align}
& u_t-\nu\Delta u+(u\cdot\nabla)u+\nabla\pi=\mu\nabla\varphi+h\qquad \hbox{in }\Omega\times(0,T)\label{Sy01}\\
& \mbox{div}(u)=0\qquad \hbox{in }\Omega\times(0,T)\label{Sy02}\\
& \varphi_t+u\cdot\nabla\varphi=\mbox{div}(m(\varphi)\nabla\mu)\qquad \hbox{in }\Omega\times(0,T)\label{Sy03} \\
& \mu=-\sigma\Delta \varphi+\frac{1}{\sigma}F^\prime(\varphi)\qquad \hbox{in }\Omega\times(0,T), \label{Sy04}
\end{align}
where $\Omega\subset \mathbb{R}^d,$ $d=2,3,$ is a bounded domain. Here  $\nu>0$ is the viscosity (supposed to be constant for simplicity), $\pi$ is the pressure, $h$ is a given (non-gradient) external force, $m$ is the so-called mobility and $\sigma>0$ is related to the (diffuse) interface thickness.

A more realistic version of the Cahn-Hilliard equation is characterized by a (spatially) nonlocal free energy.
The physical relevance of nonlocal interactions
was already pointed out in the pioneering paper \cite{Ro} (see also \cite[4.2]{Em}
and references therein). Though
isothermal and nonisothermal models containing nonlocal terms have only recently
been studied from the analytical viewpoint (cf., e.g., \cite{BH1, CKRS, GZ, GL1, GL2, GLM, KRS2} and their references).
The difference between local and nonlocal models consists in the choice
of the  interaction potential. The nonlocal contribution to the free energy has typically
the form $\int_\Omega J(x,y)\,|\varphi(x) - \varphi(y)|^2\,dy$ with a given
symmetric kernel $J$ defined on  $\Omega\times \Omega$; its  local Ginzburg-Landau counterpart
has the form $(\sigma/2)|\nabla\varphi(x)|^2$
with a positive parameter $\sigma$. The latter can be obtained as a formal
limit as ${\color{black}\zeta}\to \infty$ from the nonlocal one with the choice $J(x,y)
= \zeta^{d+2} J(|\zeta(x-y)|^2)$, where $J$ is a nonnegative function with
support in $[0,1]$. This follows from the formula formally deduced in \cite{KRS}
\begin{align}\nonumber
\int_\Omega \zeta^{d+2} J(|\zeta(x-y)|^2)\,|\varphi(x) - \varphi(y)|^2\,dy
&= \int_{\Omega_\zeta(x)} J(|z|^2) \left|\frac{\varphi\left(x+ \frac{z}{\zeta}\right)
- \varphi(x)}{\frac{1}{\zeta}}\right|^2\,dz\\ \nonumber
\stackrel{\zeta\to\infty}{\longrightarrow}&
\int_{\mathbb{R}^d} J(|z|^2)\duavg{\nabla \varphi(x), z}^2\,dz
= \frac{\sigma}{2} |\nabla \varphi(x)|^2
\end{align}
for a sufficiently regular $\varphi$, where
$\sigma = 2/d\int_{\mathbb{R}^d} J(|z|^2)|z|^2\,dz$ and $\Omega_\zeta(x) =
\zeta(\Omega - x)$. Here we have used the identity
$\int_{\mathbb{R}^d} J(|z|^2)\duavg{e,z}^2\,dz = 1/d\,\int_{\mathbb{R}^d}
J(|z|^2)|z|^2\,dz$ for every unit vector $e \in\mathbb{R}^d$.
As a consequence, the Cahn-Hilliard equation \eqref{Sy03}-\eqref{Sy04} can be viewed as
an approximation of the nonlocal one.

Nonlocal interactions have been taken into account in a series of recent papers (see \cite{CFG,FGG,FG1,FG2,FGK})
where a modification of the model H with matched densities has been considered and analyzed .
More precisely, a system of the following form has been considered (cf. \cite{CFG})
\begin{align}
& \varphi_t+u\cdot\nabla\varphi=\mbox{div}(m(\varphi)\nabla\mu)\qquad \hbox{in }\Omega\times(0,T)\label{sy1}\\
& \mu=a\varphi-J\ast\varphi+F^\prime(\varphi)\qquad \hbox{in }\Omega\times(0,T)\label{sy2}\\
& u_t-\nu\Delta u+(u\cdot\nabla)u+\nabla\pi=\mu\nabla\varphi+h\qquad \hbox{in }\Omega\times(0,T)\label{sy3}\\
& \mbox{div}(u)=0\qquad \hbox{in }\Omega\times(0,T)\label{sy4}\\
& \frac{\partial\mu}{\partial n}=0,\quad u=0 \qquad\mbox{on }
\partial\Omega\times (0,T)\label{sy5}\\
& u(0)=u_0,\quad\varphi(0)=\varphi_0 \qquad\mbox{in }\Omega,
\label{sy6}
\end{align}
where $n$ stands for the outward normal to $\partial\Omega$, while $u_0$ and $\varphi_0$ are given initial conditions. Here
the interaction kernel $J:\mathbb{R}^d \to \mathbb{R}$ is an even function and $a(x) = \displaystyle\int_\Omega J(x-y)dy$.

Nonlocal system \eqref{sy1}-\eqref{sy4} is more challenging with respect to \eqref{Sy01}-\eqref{Sy04}, even in dimension two. One of
the reasons is that $\varphi$ has a poorer regularity and this influences the treatment of the Navier-Stokes system
through the so-called Korteweg force $\mu\nabla\varphi$. {\color{black} For instance, uniqueness in dimension two is an open issue
under sufficiently general conditions which ensures the existence of a weak solution (see \cite[Remark~8]{CFG},  cf. also \cite{FGG}).
Due to this   difficulty, only the constant mobility case has been considered so far (though viscosity depending
on $\varphi$ has been handled).
Let us briefly recall the main existing results for system \eqref{sy1}--\eqref{sy4} with $m$ constant.

In \cite{CFG} the authors proved the existence of global dissipative weak
solutions in 2D and 3D, and study some regularity properties, for the case of regular potentials of arbitrary polynomial growth.
For such potentials the longterm behavior of weak solutions was analyzed in \cite{FG1}. More precisely, the existence of the global attractor in 2D and
of the trajectory attractor in 3D were established. In \cite{FG2} the previous results of \cite{CFG,FG1} were extended to the case of singular potentials. The existence of (unique) strong solutions in 2D for regular potentials with arbitrary polynomial growth were obtained in \cite{FGK}.
There, in addition, the regularity of the global attractor and the convergence of weak solutions to single equilibria were shown.

Uniqueness of weak solutions in 2D has been demonstrated only recently for both regular and singular potentials (see \cite{FGG}) .
In the same paper, further results have been proven. For instance, the existence of strong solutions and the weak-strong uniqueness for the case of nonconstant (i.e., $\varphi-$dependent) viscosity and regular potentials, the existence of exponential attractors (regular potentials), and the connectedness of the global attractor in the case of constant viscosity.

 On the other hand, despite the variety of results with $m$ constant,} in the rigorous derivation of the nonlocal Cahn-Hilliard equation done in \cite{GL1} the mobility depends on $\varphi$ and degenerates at the pure phases, {\color{black} while the potential is of logarithmic type. This motivates the main goal of this contribution, namely, the analysis of the so-called nonlocal Cahn-Hilliard-Navier-Stokes system in the case of degenerate mobility and singular potential.

{\color{black} The local Cahn-Hilliard equation with degenerate mobility {\color{black} (i.e., system (\ref{sy1}--\ref{sy6}) with $u=0$)} was considered in the seminal paper \cite{EG}, where
the authors established the existence of a weak solution (cf. also \cite{LMS,ANC} and references therein, for nondegenerate mobility see \cite{BB,Sc})}. This result was then extended to the standard {\color{black}(local)} Cahn-Hilliard-Navier-Stokes system in \cite{B}. The nonlocal Cahn-Hilliard equation with degenerate mobility and logarithmic potential was rigorously justified and analyzed in \cite{GL1} (see also \cite{GLM} and references therein). In particular, in the case of periodic boundary conditions, an existence and uniqueness result was proven in \cite{GL2}. Then a more general case was considered in \cite{GZ}. The convergence to single equilibria was recently studied in \cite{LP,LP2} (cf. also \cite{GG4} for further results).

{\color{black} Inspired by} the strategy devised in \cite{EG}, we first analyze the nonlocal system by } taking a non-degenerate mobility $m$ and a regular potential $F$ with polynomial growth. We prove the existence of a global weak solution which satisfies an energy inequality (equality if $d=2$). This result extends \cite{FG1} and allows us to construct a rigorous approximation of the case where $m$ is degenerate and $F$ is singular (e.g. logarithmic). Therefore we can pass to the limit and obtain a similar result for the latter case. In addition, since the energy identity holds in two dimensions, we can construct a {\color{black} generalized} semiflow which possesses a global attractor by using Ball's method (see \cite{Ba}). {\color{black} In the above mentioned recent contribution \cite{FGG},  uniqueness of weak solutions to system \eqref{sy1}-\eqref{sy4}
with degenerate mobility and singular potential in 2D has also been proven (constant viscosity). Hence, we can say that our semiflow is indeed a semigroup and the global attractor is connected. Regarding the 3D case, the validity of a suitable energy
  inequality (cf. \eqref{energineq}) allows to generalize the results on the trajectory attractors (cf.~\cite{FG1,FG2}) to system \eqref{sy1}-\eqref{sy4} with degenerate mobility and singular potential. By means of Ball's approach, we can also show that the convective nonlocal Cahn-Hilliard equation with degenerate mobility and singular potential possesses a (connected) global attractor. In this case uniqueness can be proven even in dimension three.} Note that this result entails, in particular, that the nonlocal Cahn-Hilliard equation which has been obtained as hydrodynamic limit in \cite{GL1} has indeed a global attractor.

  {\color{black} We point out that uniqueness of solutions is still an open issue for the local Cahn-Hilliard equation analyzed in \cite{EG}. This is one of the main advantages of the nonlocal versus the local. We remind that uniqueness of weak solutions and continuous dependence estimates are fundamental starting points, for example, in view of the study of related optimal control problems. This issue will be the subject of  forthcoming contributions.}

Let us notice here that the main difficulty encountered while dealing with the degenerate mobility case
is that the gradient of the chemical potential $\mu$ in \eqref{sy2} can no longer be controlled
in any $L^p$ space. Hence, in order to get an existence result a suitable notion of weak solution \color{black} needs \color{black}
to be introduced {\color{black} (cf. \cite{EG} for the local Cahn-Hilliard equation)}. More precisely, in this new formulation the gradient of $\mu$ does not appear anymore (cf. Definition~\ref{wsdeg} in Section~\ref{sec:deg}). It worth observing that, in the present case, our main Thm.~\ref{Theor2} does not require the (conserved) mean value of the order parameter $\varphi$  to be strictly in between $-1$ and $1$, but $|\int_\Omega\varphi_0|\leq |\Omega|$
suffices.  Thus the model allows pure phase solutions for all $t\geq 0$. This is not possible in the case of constant or {\em strongly degenerate} mobility (cf. Remarks~\ref{pure} and \ref{purebis} for further comments on this topic).

We conclude by observing that it would be particularly interesting {\color{black} (albeit nontrivial)} to extend the present and previous results on nonlocal Cahn-Hilliard-Navier-Stokes systems to {\color{black} a model with} unmatched densities (for the local {\color{black} case} see \cite{ADG1,ADG2} and references therein)  or to the compressible case (cf. \cite{AbFei} {\color{black} for the local Cahn-Hilliard equation}).

The plan of the paper goes as follows. In Section~\ref{sec:notation} we introduce the notation and the functional setting. Section~\ref{sec:nondeg} is devoted to prove the existence of a global weak solution satisfying a suitable energy inequality (equality in the 2D case) when the mobility is non-degenerate and $F$ is regular. In Section~\ref{sec:deg}, using a convenient approximation scheme, we extend the proven result to the case of degenerate  mobility and a singular $F$ (e.g. of logarithmic type). Some regularity issues for $\varphi$ and $\mu$ are also discussed.
Section~\ref{sec:long} is devoted to the existence of global attractor in the two dimensional case. Finally, in Section~\ref{sec:convCahnHill},  we consider the convective nonlocal Cahn-Hilliard equation with degenerate mobility. We deduce the existence of a global weak solution from the previous result for the coupled system. Then, even in dimension three, we establish the uniqueness of weak solutions as well as the existence of a (connected) global attractor {\color{black} under rather general assumptions {\color{black} on $F$, $J$ and $m$.}
\color{black}

\section{Notation and functional setting}\setcounter{equation}{0}
\label{sec:notation}

We set $H:=L^2(\Omega)$ and $V:=H^1(\Omega)$, where $\Omega$ is supposed to have
a sufficiently smooth boundary (say, e.g., of class $C^{1,1}$).
If $X$ is a (real) Banach space, $X'$ will denote its dual.
For every $f\in V'$ we denote by $\overline{f}$ the average of $f$
over $\Omega$, i.e., $\overline{f}:=|\Omega|^{-1}\langle
f,1\rangle$. Here $|\Omega|$ is the Lebesgue measure of $\Omega$.
Let us introduce also the spaces $V_0:=\{v\in
V:\overline{v}=0\}$, $ V_0':=\{f\in V':\overline{f}=0\}$ and the
operator $A:V\to V'$, $A\in\mathcal{L}(V,V')$ defined by
$$\langle Au,v\rangle:=\int_{\Omega}\nabla u\cdot\nabla v\qquad\forall u,v\in V.$$
We recall that $A$ maps $V$ onto $V_0'$ and the restriction of $A$
to $V_0$ maps $V_0$ onto $V_0'$ isomorphically. Let us denote by
$\mathcal{N}:V_0'\to V_0$ the inverse map defined by
$$A\mathcal{N}f=f,\quad\forall f\in V_0'\qquad\mbox{and}\qquad\mathcal{N}Au=u,\quad\forall u\in V_0.$$
As is well known, for every $f\in V_0'$, $\mathcal{N}f$ is the
unique solution with zero mean value of the Neumann problem
\begin{equation*}
\left\{\begin{array}{ll}
-\Delta u=f,\qquad\mbox{in }\Omega\\
\frac{\partial u}{\partial n}=0,\qquad\mbox{on }\partial\Omega.
\end{array}\right.
\end{equation*}
Furthermore, the following relations hold
\begin{align}
&\langle Au,\mathcal{N}f\rangle=\langle f,u\rangle,\qquad\forall u\in V,\quad\forall f\in V_0',\nonumber\\
&\langle f,\mathcal{N}g\rangle=\langle
g,\mathcal{N}f\rangle=\int_{\Omega}\nabla(\mathcal{N}f)
\cdot\nabla(\mathcal{N}g),\qquad\forall f,g\in V_0'.\nonumber
\end{align}
Recall that $A$ can be also viewed as an unbounded operator
$A:D(A)\subset H\to H$ from the domain $D(A)=\{\phi\in
H^2(\Omega):\partial\phi/\partial n=0\mbox{ on }\partial\Omega\}$
onto $H$ and that $\mathcal{N}$ can also be viewed as a
self-adjoint compact operator $\mathcal{N}=A^{-1}:H\to H$ in $H$.
Hence, the fractional powers $A^r$ and $\mathcal{N}^s$, for
$r,s\geq 0$, can be defined through classical spectral theory.

We shall repeatedly need the standard Hilbert spaces for the Navier-Stokes
equations with no-slip boundary condition (see, e.g., \cite{T})
$$
G_{div}:=\overline{\{u\in
C^\infty_0(\Omega)^d:\mbox{
div}(u)=0\}}^{L^2(\Omega)^d},\quad
V_{div}:=\{u\in H_0^1(\Omega)^d:\mbox{ div}(u)=0\}.
$$
We denote by $\|\cdot\|$ and $(\cdot,\cdot)$ the norm and the
scalar product on both $H$ and $G_{div}$, respectively. Instead, $V_{div}$ is
endowed with the scalar product
$$(u,v)_{V_{div}}=(\nabla u,\nabla v),\qquad\forall u,v\in V_{div}.$$

In the proof of Theorem \ref{thm} we shall introduce the family of the eigenfunctions
of the Stokes operator $S$
with no-slip boundary condition. We recall that $S:D(S)\subset G_{div}\to G_{div}$ is defined as
$S:=-P\Delta$  with domain $D(S)=H^2(\Omega)^d\cap V_{div}$,
where $P:L^2(\Omega)^d\to G_{div}$ is the Leray projector. Notice that we have
$$(Su,v)=(u,v)_{V_{div}}=(\nabla u,\nabla v),\qquad\forall u\in D(S),\quad\forall v\in V_{div},$$
and $S^{-1}:G_{div}\to G_{div}$ is a self-adjoint compact operator in $G_{div}$.
Thus, according with classical results,
$S$ possesses a sequence of eigenvalues $\{\lambda_j\}$ with $0<\lambda_1\leq\lambda_2\leq\cdots$ and $\lambda_j\to\infty$,
and a family $\{w_j\}\subset D(S)$ of eigenfunctions which is orthonormal in $G_{div}$.
Let us also recall the Poincar\'{e} inequality
$$\lambda_1\Vert u\Vert^2\leq\Vert\nabla u\Vert^2,\qquad\forall u\in V_{div}.$$


The trilinear form $b$ which appears in the weak formulation of the
Navier-Stokes equations is defined as follows
$$b(u,v,w)=\int_{\Omega}(u\cdot\nabla)v\cdot w,\qquad\forall u,v,w\in V_{div}.$$
We recall that we have
\begin{equation}
b(u,w,v)=-b(u,v,w),\qquad\forall u,v,w\in V_{div}.\nonumber
\end{equation}
For  the basic estimates satisfied by the trilinear form $b$ the reader is referred to, e.g., \cite{T}.

 Finally, if $X$ is a (real) Banach space and $\tau\in\mathbb{R}$, we shall denote by $L^p_{tb}(\tau,\infty;X)$, $1\leq p<\infty$,
the space of functions $f\in L^p_{loc}([\tau,\infty);X)$ that are translation bounded in $L^p_{loc}([\tau,\infty);X)$,
i.e. such that
\begin{align}
&\Vert f\Vert_{L^p_{tb}(\tau,\infty;X)}^p:=\sup_{t\geq\tau}\int_t^{t+1}\Vert f(s)\Vert_X^p ds<\infty.
\end{align}

\section{Non-degenerate mobility}\setcounter{equation}{0}
\label{sec:nondeg}

Let us first consider the case where the mobility $m$ does not degenerate, i.e. $m$ satisfies the following assumption
\begin{description}
\item[(H1)] $m\in C^{0,1}_{loc}(\mathbb{R})$
and there exist $m_1,m_2>0$ such that
\begin{align}
m_1\leq m(s)\leq m_2,\qquad\forall s\in\mathbb{R}.\nonumber
\end{align}
\end{description}
The other assumptions we need are the ones on the kernel $J$, on
the potential $F$ and the forcing term $h$ which are the same as
in \cite{CFG}
\begin{description}
\item[(H2)] $J(\cdot-x)\in W^{1,1}(\Omega)$ for almost any
$x\in\Omega$ and satisfies
 \begin{align}
 & J(x)=J(-x),\qquad a(x) := \displaystyle
\int_{\Omega}J(x-y)dy \geq 0,\quad\mbox{ a.e. }
x\in\Omega,\nonumber
\end{align}
\begin{align}
&
a^\ast:=\sup_{x\in\Omega}\int_\Omega|J(x-y)|dy<\infty, \qquad
b:=\sup_{x\in\Omega}\int_\Omega|\nabla J(x-y)|dy<\infty.\nonumber
\end{align}

\item[(H3)] $F\in C^{2,1}_{loc}(\mathbb{R})$ and there exists $c_0>0$
    such that
            $$F^{\prime\prime}(s)+a(x)\geq c_0,\qquad\forall s\in\mathbb{R},\quad\mbox{a.e. }x\in\Omega.$$
\item[(H4)] 
    There exist $c_1>(a^\ast-a_\ast)/2$, where
   $$a_\ast:=\inf_{x\in\Omega}\int_\Omega J(x-y)dy,$$
     and $c_2\in\mathbb{R}$ such that
            $$F(s)\geq c_1 s^2 - c_2,
            \qquad\forall s\in\mathbb{R}.$$

\item[(H5)] There exist $c_3>0$, $c_4\geq0$ and $r\in(1,2]$
    such that
            $$|F^\prime(s)|^r\leq c_3|F(s)|+c_4,\qquad
            \forall s\in\mathbb{R}.$$
\item[(H6)] $h\in L^2(0,T;V_{div}')\quad$ for all $T>0$.
\end{description}
Some further regularity properties of the weak solution can be established
by using the following assumption
\begin{description}
\item[(H7)] $F\in C^{2,1}_{loc}(\mathbb{R})$ and there exist $c_5>0$, $c_6>0$ and $p>2$
    such that
            $$F^{\prime\prime}(s)+a(x)\geq c_5\vert s\vert^{p-2} - c_6,
            \qquad\forall s\in\mathbb{R},\quad\mbox{a.e. }x\in\Omega.$$

\end{description}

\begin{oss}{\upshape
A well known example of potential $F$ and
kernel $J$ which satisfy the above conditions is given by
$F(s)=(s^2-1)^2$ and
\color{black} $J(x)=j_3|x|^{-1}$, \color{black} if $d=3$, and
$J(x)=-j_2\log|x|$, if $d=2$,
where $j_2$ and $j_3$ are positive constants.}
\end{oss}

Before stating the main result of this section, let us recall the definition of weak solution to
system \eqref{sy1}--\eqref{sy6}.
\begin{defn}
\label{wfdfn}
Let $u_0\in G_{div}$, $\varphi_0\in H$ such that $F(\varphi_0)\in
L^1(\Omega)$, and $0<T<\infty$ be given. Then, a couple
$[u,\varphi]$ is a weak solution to \eqref{sy1}--\eqref{sy6} on
$[0,T]$ if
\begin{align}
&u\in L^{\infty}(0,T;G_{div})\cap L^2(0,T;V_{div}),\qquad\varphi \in L^\infty(0,T;H)\cap L^2(0,T;V),
\label{propreg1}\\
&u_t\in L^{4/3}(0,T;V_{div}'),\qquad\varphi_t\in L^{4/3}(0,T;V'),\qquad\mbox{if }\: d=3,\label{propreg2}\\
&u_t\in L^{2-\gamma}(0,T;V_{div}'),\qquad\varphi_t\in L^{2-\delta}(0,T;V')
\qquad\gamma,\delta\in(0,1),\qquad\mbox{if }\: d=2,\label{propreg3}\\
&\mu:=a\varphi-J\ast\varphi+F'(\varphi)\in L^2(0,T;V),\label{propreg7}
\end{align}
and the following variational formulation is satisfied for almost any $t\in(0,T)$
\begin{align}
&\langle\varphi_t,\psi\rangle+(m(\varphi)\nabla\mu,\nabla\psi)=(u\varphi,\nabla\psi),\qquad\forall\psi\in V,
\label{wdefnd1}\\
&\langle u_t,v\rangle+\nu(\nabla u,\nabla v)+b(u,u,v)=-(\varphi\nabla\mu,v)+\langle h,v\rangle,
\qquad\forall v\in V_{div},\label{wdefnd2}
\end{align}
together with the initial conditions \eqref{sy6}.
\end{defn}
\begin{oss}
\label{inconds}
{\upshape
It is easy to see that
$u\in C_w([0,T];G_{div})$ and $\varphi\in C_w([0,T];H)$.
Hence, the initial conditions \eqref{sy6} make sense.
}
\end{oss}

\begin{thm}
\label{thm} Let $u_0\in G_{div}$, $\varphi_0\in H$ such that
$F(\varphi_0)\in L^1(\Omega)$, and suppose that (H1)-(H6) are
satisfied. Then, for every given $T>0$, there exists a weak
solution $[u,\varphi]$ to \eqref{sy1}--\eqref{sy6}
 in the sense of Definition \ref{wfdfn} \color{black}
satisfying the energy inequality
\begin{equation}
\mathcal{E}(u(t),\varphi(t)) +\int_0^t\Big(\nu\|\nabla u \|^2+\|\sqrt{m(\varphi)}\nabla\mu \|^2\Big)d\tau
\leq\mathcal{E}(u_0,\varphi_0)+\int_0^t\langle h(\tau),u \rangle d\tau,\label{ei}
\end{equation}
for every $t>0$,
where we have set
\begin{align}
\mathcal{E}(u(t),\varphi(t))=\frac{1}{2}\|u(t)\|^2+\frac{1}{4}
\int_{\Omega}\int_{\Omega}J(x-y)(\varphi(x,t)-\varphi(y,t))^2
dxdy+\int_{\Omega}F(\varphi(t)).\label{endef}
\end{align}
Furthermore, assume that
assumption (H4) is replaced by (H7). Then, for every $T>0$ there
exists a weak solution $[u,\varphi]$ to \eqref{sy1}--\eqref{sy6}
on $[0,T]$ corresponding to $[u_0,\varphi_0]$  (in the sense of Definition \ref{wfdfn}) \color{black} satisfying
\eqref{propreg1}, \eqref{propreg2} and also
\begin{align}
&\varphi \in L^\infty(0,T;L^p(\Omega)), \label{propreg4}\\
&\varphi_t\in L^2(0,T;V'),\quad\mbox{if}\quad d=2\quad\mbox{ or }\quad\big(d=3 \mbox{ and } p\geq 3\big),\label{propreg5}\\
& u_t\in L^2(0,T;V_{div}'),\quad\mbox{if}\quad d=2.\label{propreg6}
\end{align}
Finally, assume that $d=2$ and (H4) is replaced by (H7). Then,
\begin{enumerate}
\item any weak solution satisfies the energy identity
\begin{equation}
\frac{d}{dt}\mathcal{E}(u,\varphi)
+\nu\|\nabla u \|^2+\|\sqrt{m(\varphi)}\nabla\mu \|^2=\langle h(t),u\rangle,\qquad t>0.
\label{eniden}
\end{equation}
In particular we have $u\in C([0,\infty);G_{div})$, $\varphi\in C([0,\infty);H)$ and $\int_\Omega F(\varphi)\in C([0,\infty))$.
\item If in addition $h\in L^2_{tb}(0,\infty;V_{div}')$,
then any weak solution satisfies
also the dissipative estimate
\begin{align}
&\mathcal{E}(u(t),\varphi(t))\leq \mathcal{E}(u_0,\varphi_0)e^{-kt}+ F(m_0)|\Omega| + K,
\qquad\forall t\geq 0,\label{disest}
\end{align}
where $m_0=(\varphi_0,1)$ and $k$, $K$ are two positive constants
which are independent of the initial data, with $K$ depending on
$\Omega$, $\nu$, $J$, $F$ and
$\|h\|_{L^2_{tb}(0,\infty;V_{div}')}$.
\end{enumerate}

\end{thm}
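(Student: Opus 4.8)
The plan is to build the solution through a Faedo--Galerkin scheme whose master a priori bound is the energy balance, pass to the limit to obtain the weak solution together with the energy inequality \eqref{ei}, and then recover the finer assertions --- the extra regularity under (H7), the $d=2$ energy identity \eqref{eniden}, and the dissipative estimate \eqref{disest} --- by upgrading the time-derivative regularity and running a uniform Gronwall argument. Concretely, I would use the eigenfunctions $\{w_j\}$ of the Stokes operator $S$ as a basis for the velocity and the eigenfunctions of the Neumann operator $A$ for the order parameter, and seek finite-dimensional $u_n,\varphi_n$ solving the projected system. Since $m$ is bounded and Lipschitz by (H1) and $F\in C^{2,1}_{loc}$ by (H3), the nonlinearities are locally Lipschitz in the Galerkin coefficients, so Cauchy--Lipschitz yields a unique local-in-time solution, made global by the bounds below.

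The core step is the discrete energy identity. Testing the approximate Cahn--Hilliard equation with $\mu_n=a\varphi_n-J*\varphi_n+F'(\varphi_n)$ and the approximate Navier--Stokes equation with $u_n$ and adding, $b(u_n,u_n,u_n)=0$ and the two Korteweg contributions $(u_n\varphi_n,\nabla\mu_n)$ and $(\varphi_n\nabla\mu_n,u_n)$ cancel identically, so that, using that $\langle\varphi_{n,t},\mu_n\rangle$ equals the time derivative of the nonlocal-plus-bulk part of $\mathcal{E}$,
\[
\frac{d}{dt}\mathcal{E}(u_n,\varphi_n)+\nu\|\nabla u_n\|^2+\|\sqrt{m(\varphi_n)}\nabla\mu_n\|^2=\langle h,u_n\rangle .
\]
Controlling $\langle h,u_n\rangle$ by Young's inequality and exploiting the coercivity (H4) --- where $c_1>(a^\ast-a_\ast)/2$ is precisely what makes the quadratic part of $\mathcal{E}$ coercive after estimating the kernel term --- I obtain uniform bounds on $u_n$ in $L^\infty(0,T;G_{div})\cap L^2(0,T;V_{div})$, on $\varphi_n$ in $L^\infty(0,T;H)$, on $\int_\Omega F(\varphi_n)$ in $L^\infty(0,T)$, and, since $m\geq m_1>0$, on $\nabla\mu_n$ in $L^2(0,T;H)$. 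To upgrade this to $\mu_n$ bounded in $L^2(0,T;V)$ as in \eqref{propreg7} I control the mean: $\int_\Omega(a\varphi_n-J*\varphi_n)=0$ by the symmetry of $J$, so $\overline{\mu_n}=|\Omega|^{-1}\int_\Omega F'(\varphi_n)$, which (H5) bounds through $\int_\Omega|F(\varphi_n)|$, finite because $F$ is bounded below. Finally the time-derivative bounds \eqref{propreg2}--\eqref{propreg3} come from testing with general $\psi\in V$, $v\in V_{div}$ and estimating term by term; the Korteweg force is placed in the correct dual space by H\"older together with the interpolation $\varphi_n\in L^\infty(0,T;H)\cap L^2(0,T;L^6)$, which in $d=3$ only delivers $u_{n,t}\in L^{4/3}(0,T;V_{div}')$ but in $d=2$ gives the sharper exponents.

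Extracting weakly and weakly-$\ast$ convergent subsequences, I would apply Aubin--Lions--Simon to get $\varphi_n\to\varphi$ strongly in $L^2(0,T;H)$ and a.e., and $u_n\to u$ strongly in $L^2(0,T;G_{div})$. The new feature due to the nonconstant mobility is the flux $m(\varphi_n)\nabla\mu_n$: a.e. convergence of $\varphi_n$ with the boundedness and continuity of $m$ give $m(\varphi_n)\to m(\varphi)$ strongly in every $L^p$, $p<\infty$, which paired with $\nabla\mu_n\rightharpoonup\nabla\mu$ in $L^2$ (strong times weak) identifies the limit as $m(\varphi)\nabla\mu$; likewise $F'(\varphi_n)\to F'(\varphi)$, and the convective and Korteweg terms pass to the limit by the strong convergence of $u_n,\varphi_n$. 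Passing to the limit in the discrete identity yields \eqref{ei} as an inequality, the dissipative and convex/quadratic terms being weakly lower semicontinuous. Replacing (H4) by the stronger (H7) and testing suitably gives the extra coercive control $\varphi\in L^\infty(0,T;L^p)$ of \eqref{propreg4}, which feeds back into the sharper time-derivative estimates \eqref{propreg5}--\eqref{propreg6}.

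For $d=2$ under (H7), the regularity $\varphi_t\in L^2(0,T;V')$ and $u_t\in L^2(0,T;V_{div}')$ together with $\mu\in L^2(0,T;V)$ and $u\in L^2(0,T;V_{div})$ makes $\mu$ and $u$ admissible test functions, so the formal computation becomes rigorous via a chain-rule/Lions--Magenes argument and the energy balance upgrades to the identity \eqref{eniden}; absolute continuity of $t\mapsto\mathcal{E}$ then gives the stated continuities. For the dissipative estimate I bound $\langle h,u\rangle\leq\frac{\nu}{2}\|\nabla u\|^2+C\|h\|_{V_{div}'}^2$, absorb the gradient term, and prove that the dissipation $\nu\|\nabla u\|^2+m_1\|\nabla\mu\|^2$ dominates $k\big(\mathcal{E}-F(m_0)|\Omega|\big)$ up to an additive constant --- a Poincar\'e--Wirtinger argument relating $\mu-\overline{\mu}$ to $\varphi-\overline{\varphi}$, together with conservation of the mean $m_0=(\varphi_0,1)$ that fixes the floor $F(m_0)|\Omega|$ --- after which a uniform Gronwall lemma for translation-bounded forcing yields \eqref{disest}. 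I expect the main obstacle to be the Korteweg force $\varphi\nabla\mu$ in the Navier--Stokes limit: since the nonlocal energy controls $\varphi$ only in $L^\infty(0,T;H)\cap L^2(0,T;V)$ rather than in $L^\infty(0,T;V)$ as in the local model, one must interpolate carefully to place $\varphi\nabla\mu$ in the right dual space and justify its weak limit, and this is precisely what dictates the time-regularity exponents in Definition \ref{wfdfn} and makes $d=3$ genuinely weaker than $d=2$; the coercivity inequality behind \eqref{disest} is a secondary but delicate point.
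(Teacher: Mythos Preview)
Your proposal is correct and follows essentially the same route as the paper: Faedo--Galerkin with Stokes eigenfunctions for $u$ and Neumann eigenfunctions for $\varphi$, the discrete energy identity as the master bound, Aubin--Lions compactness, the strong--times--weak product argument for the variable-mobility flux $m(\varphi_n)\nabla\mu_n$, and then (H7) together with the $d=2$ duality pairings to upgrade to the energy identity and run the Gronwall argument for \eqref{disest}.

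Two places where your sketch is thinner than the paper and would need to be filled in. First, you invoke $\varphi_n\in L^\infty(0,T;H)\cap L^2(0,T;L^6)$ when estimating the Korteweg term, but the energy identity alone only delivers $\varphi_n\in L^\infty(0,T;H)$ and $\nabla\mu_n\in L^2(0,T;H)$; to recover $\nabla\varphi_n\in L^2(0,T;H)$ the paper tests $\mu_n$ against $-\Delta\varphi_n$ and uses (H3) to obtain the key lower bound
\[
\|\nabla\mu_n\|^2\ \ge\ \frac{c_0^2}{4}\|\nabla\varphi_n\|^2-c\|\varphi_n\|^2,
\]
which is what actually produces \eqref{propreg1} for $\varphi$. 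Second, for general $\varphi_0\in H$ with $F(\varphi_0)\in L^1(\Omega)$ there is no reason the Galerkin projections $P_n\varphi_0$ satisfy $\int_\Omega F(P_n\varphi_0)\le C$ uniformly in $n$, so the right-hand side of the discrete energy identity is not controlled. The paper first runs the entire scheme assuming $\varphi_0\in D(B)$ (so that $P_n\varphi_0\to\varphi_0$ in $H^2$, hence uniformly, and $F(P_n\varphi_0)\to F(\varphi_0)$ in $L^1$), and only at the very end removes this extra regularity by approximating $\varphi_0$ with $\varphi_{0m}=(I+B/m)^{-1}\varphi_0$ and using the convexity built into (H3) to control $\int_\Omega F(\varphi_{0m})$.
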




\begin{proof}
The argument follows the lines of \cite[Proof of Theorem 1]{CFG} and is based
on a Faedo-Galerkin approximation scheme. For the reader's convenience, we give a sketch of it.
Let us first assume that $\varphi_0\in D(B)$, where the operator $B$ is given by
$B=-\Delta+I$ with homogeneous Neumann boundary condition.
We introduce the family $\{w_j\}_{j\geq 1}$ of the eigenfunctions of the Stokes operator $S$ as a Galerkin
base in $V_{div}$ and the family $\{\psi_j\}_{j\geq 1}$ of the eigenfunctions of $B$
as a Galerkin base in $V$. We define the $n-$dimensional subspaces
$\mathcal{W}_n:=\langle w_1,\cdots,w_n\rangle$ and
$\Psi_n:=\langle\psi_1,\cdots,\psi_n\rangle$ and consider the
orthogonal projectors on these subspaces in $G_{div}$ and $H$,
respectively, i.e., $\widetilde{P}_n:=P_{\mathcal{W}_n}$ and
$P_n:=P_{\Psi_n}$. We then look for three functions of the form
$$u_n(t)=\sum_{k=1}^n \alpha^{(n)}_k(t)w_k,\quad \varphi_n(t)=\sum_{k=1}^n \beta^{(n)}_k(t)\psi_k,
\quad \mu_n(t)=\sum_{k=1}^n \gamma^{(n)}_k(t)\psi_k$$ which solve
the following approximating problem
\begin{align}
&(\varphi_n',\psi)+(m(\varphi_n)\nabla\mu_n,\nabla\psi)=(u_n\varphi_n,\nabla\psi)\label{apb1}\\
&(u_n',v)+\nu(\nabla u_n,\nabla v)+b(u_n,u_n,v)=-(\varphi_n\nabla\mu_n,v)+(h_n,v)\label{apb2}\\
&\mu_n=P_n\big(a\varphi_n-J\ast\varphi_n+F'(\varphi_n)\big)\label{apb3}\\
&\varphi_n(0)=\varphi_{0n},\quad u_n(0)=u_{0n},\label{apb4}
\end{align}
for every $\psi\in\Psi_n$ and every $v\in\mathcal{W}_n$, where
$\varphi_{0n}=P_n\varphi_0$ and $u_{0n}=\widetilde{P}_nu_0$
(primes denote derivatives with respect to time). In \eqref{apb2}
$\{h_n\}$ is a sequence in $C^0([0,T];G_{div})$ such that $h_n\to
h$ in $L^2(0,T;V_{div}')$. It is easy to see that this
approximating problem is equivalent to solve a Cauchy problem
for a system of ODEs in the
$2n$ unknowns $\alpha^{(n)}_i$, $\beta^{(n)}_i$
($\gamma^{(n)}_i$ can be deduced from \eqref{apb3}).
  Since $F^{\prime\prime}$ and $m$ are
  locally Lipschitz on $\mathbb{R}$, the
Cauchy-Lipschitz theorem ensures that there exists
$T^{\ast}_n\in(0,+\infty]$ such that system \eqref{apb1}-\eqref{apb4} has a unique
maximal solution
$\textbf{a}^{(n)}:=(\alpha^{(n)}_1,\cdots,\alpha^{(n)}_n)$,
$\textbf{b}^{(n)}:=(\beta^{(n)}_1,\cdots,\beta^{(n)}_n)$ on
$[0,T^{\ast}_n)$ with $\textbf{a}^{(n)}$, $\textbf{b}^{(n)}\in
C^1([0,T^{\ast}_n);\mathbb{R}^n)$.

By taking $\psi=\mu_n$ and $v=u_n$ in \eqref{apb1} and \eqref{apb2}, respectively,
and adding the resulting identities together, we get
\begin{align}
&\frac{d}{dt}\mathcal{E}(u_n,\varphi_n)+\nu\Vert\nabla u_n\Vert^2+
\Vert \sqrt{m(\varphi_n)}\nabla\mu_n\Vert^2=(h_n,u_n),\nonumber
\end{align}
where $\mathcal{E}$ is defined as in \eqref{endef}. Integrating
this identity between $0$ and $t$
\begin{align}
&\mathcal{E}(u_n(t),\varphi_n(t)) +\int_0^t\Big(\nu\|\nabla u_n
\|^2+\|\sqrt{m(\varphi_n)}\nabla\mu_n \|^2\Big)d\tau
=\mathcal{E}(u_{0n},\varphi_{0n})+\int_0^t\langle h_n(\tau),u_n
\rangle d\tau.\label{eiapp}
\end{align}
Observe now that
\begin{align}
&\int_0^t\langle h_n(\tau),u_n \rangle
d\tau\leq\frac{\nu}{2}\int_0^t\Vert\nabla u_n\Vert^2
d\tau+\frac{1}{2\nu}\int_0^t\Vert h_n\Vert_{V_{div}'}^2
d\tau.\nonumber
\end{align}
On the other hand, taking (H2) and (H4) into account,  we get
\begin{align}
&\mathcal{E}(u_n,\varphi_n)=\frac{1}{2}\Vert u_n\Vert^2+
\frac{1}{2}\Vert\sqrt{a}\varphi_n\Vert^2-\frac{1}{2}(\varphi_n,J\ast\varphi_n)
+\int_\Omega F(\varphi_n)\nonumber\\
&\geq\frac{1}{2}\Vert u_n\Vert^2+\frac{1}{2}\int_\Omega a\varphi_n^2
-\frac{1}{2}\Vert\varphi_n\Vert\Vert J\ast\varphi_n\Vert+\int_\Omega(c_1\varphi_n^2-c_2)\nonumber\\
&\geq \frac{1}{2}\Vert u_n\Vert^2+c_1'\Vert\varphi_n\Vert^2-c_2',\nonumber
\end{align}
where $c_1'=(a_\ast-a^\ast)/2+c_1>0$ and $c_2'=c_2|\Omega|$. Using
also \color{black} the convergence assumption for $\{u_{0n}\}$,
$\{h_n\}$, the fact that $\varphi_{0n}\to\varphi_0$ in
$H^2(\Omega)$ (since $\varphi_0\in D(B)$) and the lower bound
$m_1>0$ for the mobility $m$ (cf. (H1)), we first deduce that
$T_n^\ast=+\infty$ for every $n\geq 1$ (notice that
$|\textbf{a}^{(n)}(t)|=\Vert u_n(t)\Vert$ and
$|\textbf{b}^{(n)}(t)|=\Vert \varphi_n(t)\Vert$) and furthermore
we get the following estimates which hold for any given
$0<T<+\infty$
\begin{align}
&\|u_n\|_{L^{\infty}(0,T;G_{div})\cap L^2(0,T;V_{div})}\leq C,\label{est1}\\
&\|\varphi_n\|_{L^{\infty}(0,T;H)}\leq C,\label{est2}\\
&\| F(\varphi_n)\|_{L^{\infty}(0,T;L^1(\Omega))}\leq C,\label{est3}\\
&\|\nabla\mu_n\|_{L^2(0,T;H)}\leq C,\label{est4}
\end{align}
with $C$ independent of $n$. Henceforth we shall denote by $C$ a
positive constant which depends at most on $\Vert u_0\Vert$, $\Vert\varphi_0\Vert$,
$\int_\Omega F(\varphi_0)$, $\Vert
h\Vert_{L^2(0,T;V_{div}')}$ and on $J$, $F$, $\nu$, $m_1$, $\Omega$, but they are independent of $n$. Instead, $c$ will stand for a generic positive constant depending on the parameters of the problem only, i.e. on $J$, $F$, $\nu$, $m_1$ and $\Omega$, but it is independent of $n$.
The values of both $C$ and $c$ may possibly vary even within the same line.

Let us now recall the estimate
\begin{equation}
\label{chemlowbd}
\|\nabla\mu_n\|^2\geq\frac{c_0^2}{4}\|\nabla\varphi_n\|^2-c\|\varphi_n\|^2,
\end{equation}
which can be deduced as in \cite[Proof of Theorem 1]{CFG}
by multiplying $\mu_n$ by $-\Delta\varphi_n$ in $L^2$ and integrating by parts.
By means of \eqref{chemlowbd}, from \eqref{est2} and \eqref{est4} we get
\begin{align}
\|\varphi_n\|_{L^2(0,T;V)}\leq C.\label{est6}
\end{align}
Due to (H5), which in particular implies that $|F'(s)|\leq c|F(s)|+c$, we have
$$|\overline{\mu}_n|=\Big|\int_\Omega F'(\varphi_n)\Big|\leq c\Vert F(\varphi_n)\Vert_{L^1(\Omega)}+c\leq C,$$
and therefore, thanks to \eqref{est3}, \eqref{est4} and to the Poincar\'{e}-Wirtinger inequality, we deduce
\begin{align}
&\|\mu_n\|_{L^2(0,T;V)}\leq C.\label{est7}
\end{align}

In addition, (H5) and \eqref{est3} yield the following control
\begin{align}
&\Vert F'(\varphi_n)\Vert_{L^\infty(0,T;L^r(\Omega))}\leq C.
\label{est8}
\end{align}

Let us now derive the estimates for the sequences of time derivatives $\{u_n'\}$ and $\{\varphi_n'\}$.
By taking $\psi_I:=P_n\psi$, for $\psi\in V$ arbitrary, as test function in \eqref{apb1} we obtain
\begin{align}
&\langle\varphi_n',\psi\rangle=(\varphi_n',\psi_I)=-(m(\varphi_n)\nabla\mu_n,\nabla\psi_I)+(u_n\varphi_n,\nabla\psi_I).
\nonumber
\end{align}

Assume first $d=3$. We have
\begin{align}
&|\langle\varphi_n',\psi\rangle|\leq m_2\Vert\nabla\mu_n\Vert\Vert\nabla\psi_I\Vert+
\Vert u_n\Vert_{L^6(\Omega)^3}\Vert\varphi_n\Vert_{L^3(\Omega)}\Vert\nabla\psi_I\Vert\nonumber\\
&\leq c\big(\Vert\nabla\mu_n\Vert+\Vert\nabla u_n\Vert\Vert\varphi_n\Vert^{1/2}\Vert\varphi_n\Vert_{L^6(\Omega)}^{1/2}\big)\Vert\nabla\psi\Vert\nonumber\\
&\leq C\big(\Vert\nabla\mu_n\Vert+
\Vert\nabla u_n\Vert\Vert\varphi_n\Vert_V^{1/2}\big)\Vert\nabla\psi\Vert,\qquad\forall\psi\in V\label{est9}
\end{align}
where in the last estimate we have used \eqref{est2}.
Then, by \eqref{est1}, \eqref{est4} and \eqref{est6} from \eqref{est9} we get
\begin{align}
&\Vert\varphi_n'\Vert_{L^{4/3}(0,T;V')}\leq C.
\label{est31}
\end{align}
For $d=2$, by arguing as above it is not difficult to infer the following bound
\begin{align}
&\Vert\varphi_n'\Vert_{L^{2-\delta}(0,T;V')}\leq C.\label{est10}
\end{align}

As far as the sequence $\{u_n'\}$ is concerned, by arguing exactly as in \cite[Proof of Theorem 1]{CFG}
we can deduce the bounds
\begin{align}
&\Vert u_n'\Vert_{L^{4/3}(0,T;V_{div}')}\leq C,\qquad\mbox{if}\:\: d=3,\label{est11}\\
& \Vert u_n'\Vert_{L^{2-\gamma}(0,T;V_{div}')}\leq C,\qquad\mbox{if}\:\: d=2.\label{est12}
\end{align}
From \eqref{est1}--\eqref{est2}, \eqref{est6}--\eqref{est8} and \eqref{est31}--\eqref{est12},
using compactness results, we obtain for a not relabeled subsequence
\begin{align}
& u_n\rightharpoonup u\quad\mbox{weakly}^{\ast}\mbox{ in } L^{\infty}(0,T;G_{div}),
\quad\mbox{weakly in }L^2(0,T;V_{div}),\label{con1}\\
& u_n\to u\quad\mbox{strongly in }L^2(0,T;G_{div}),\quad\mbox{a.e. in }Q_T,\label{con2}\\
& u_n'\rightharpoonup u_t\quad\mbox{weakly in }L^{4/3}(0,T;V_{div}'),\qquad d=3,\label{con3}\\
& u_n'\rightharpoonup u_t
\quad\mbox{weakly in }L^{2-\gamma}(0,T;V_{div}'),\qquad\:\: d=2,\label{con4}\\
& \varphi_n\rightharpoonup\varphi\quad\mbox{weakly}^{\ast}\mbox{ in }L^{\infty}(0,T;H),
\quad\mbox{weakly in }L^2(0,T;V),\label{con5}\\
& \varphi_n\to\varphi\quad\mbox{strongly in }L^2(0,T;H),\quad\mbox{a.e. in }Q_T,\label{con6}\\
& \varphi_n'\to\varphi_t\quad\mbox{weakly in }L^{2-\delta}(0,T;V'), \label{con7}\\
& F'(\varphi_n)\rightharpoonup F^\ast\quad\mbox{weakly}^{\ast}\mbox{ in } L^{\infty}(0,T;L^r(\Omega)),\label{con8}\\
& \mu_n\rightharpoonup\mu\quad\mbox{weakly in }L^2(0,T;V).\label{con9}
\end{align}
where $Q_T:=\Omega\times (0,T)$. The pointwise
convergence \eqref{con6}, the weak $^\ast$ convergence
\eqref{con8} and the continuity of $F'$ yield
$F^\ast=F'(\varphi)$.
 By means of
\eqref{con1}--\eqref{con9} we can now pass to the limit in the
approximate problem \eqref{apb1} --\eqref{apb4} as done in
\cite[Proof of Theorem 1]{CFG} and obtain the variational
formulation of system \eqref{sy1}--\eqref{sy6} for functions $u$,
$\varphi$ and $\mu$. In particular notice that, fixing $n$
arbitrarily, for every $\chi\in C^\infty_0(0,T)$ and every
$\psi\in \Psi_n$ we have
\begin{align}
&\int_0^T(m(\varphi_k)\nabla\mu_k,\nabla\psi)\chi d\tau\to\int_0^T(m(\varphi)\nabla\mu,\nabla\psi)\chi d\tau,
\qquad\mbox{as }\:\: k\to\infty,\nonumber
\end{align}
as a consequence of the weak convergence \eqref{con9} and of the convergence
\begin{align}
&m(\varphi_k)\to m(\varphi)\qquad\mbox{strongly in }\: L^r(Q_T),\quad\mbox{for all }\:\: r\in [1,\infty),
\label{con10}
\end{align}
ensured by \eqref{con6} and by Lebesgue's theorem.

As far as the energy inequality is concerned, let us observe that \eqref{con9} and \eqref{con10}
imply that
\begin{align}
&\sqrt{m(\varphi_n)}\nabla\mu_n\rightharpoonup\sqrt{m(\varphi)}\nabla\mu\qquad\mbox{weakly in}\: L^s(Q_T),
\quad\mbox{for all }\: s\in[1,2).
\label{con11}
\end{align}
But \eqref{eiapp} yields
\begin{align}
&\Vert\sqrt{m(\varphi_n)}\nabla\mu_n\Vert_{L^2(Q_T)}\leq C,\nonumber
\end{align}
and hence \eqref{con11} holds also for $s=2$. Therefore
\begin{align}
&\int_0^t\Vert\sqrt{m(\varphi)}\nabla\mu\Vert^2 d\tau
\leq\liminf_{n\to\infty}\int_0^t\Vert\sqrt{m(\varphi_n)}\nabla\mu_n\Vert^2 d\tau.\nonumber
\end{align}
This fact and the above convergences allow us to pass to the inferior limit
in \eqref{eiapp} and deduce \eqref{ei}.

Let us now suppose that assumption (H4) is replaced by (H7). Then, the energy of the approximate solution
can be controlled from below by
\begin{align}
&\mathcal{E}(u_n,\varphi_n)\geq c\big(\Vert u_n\Vert^2+\Vert\varphi_n\Vert_{L^p(\Omega)}^p\big)-c.\nonumber
\end{align}
From \eqref{eiapp} we can then improve \eqref{est2}, that is
\begin{align}
&\Vert\varphi_n\Vert_{L^\infty(0,T;L^p(\Omega))}\leq C,
\label{est32}
\end{align}
which yields \eqref{propreg4}. The regularity properties \eqref{propreg5} and \eqref{propreg6}
follow by a comparison argument exactly as in \cite[Proof of Corollary 1]{CFG} with
reference to the weak formulation \eqref{wdefnd1}, \eqref{wdefnd2} and taking
the improved regularity for $\varphi$ and (H1) into account.

Furthermore, if $d=2$ and (H4) is replaced by (H7), we can take $\mu$ and $u$ as test functions in
the variational formulation \eqref{wdefnd1}, \eqref{wdefnd2} and argue as in \cite[Proof of Corollary 2]{CFG}
in order to deduce the energy identity \eqref{eniden}.
As far as the the dissipative estimate \eqref{disest} is concerned, this can be deduced without difficulties
by adapting the argument of \cite[Proof of Corollary 2]{CFG} and taking (H1) into account.

Finally, if $\varphi_0\in H$ with $F(\varphi_0)\in L^1(\Omega)$, we can
approximate $\varphi_0$ with $\varphi_{0m}\in D(B)$ given by
$\varphi_{0m}:=(I+B/m)^{-1}\varphi_0$. This sequence satisfies
$\varphi_{0m}\to\varphi$ in $H$ and by exploiting assumption (H3) and the argument
of \cite[Proof of Theorem 1]{CFG} we can easily recover the existence of a weak solution, the energy
inequality \eqref{ei} and, for $d=2$, the energy identity \eqref{eniden} as well as the dissipative estimate \eqref{disest}.
\end{proof}

It may be interesting to observe that another energy identity for $d=2$ (inequality for $d=3$)
is satisfied by the weak solution of Theorem \ref{thm}. This energy identity will turn to be useful
especially in the degenerate case in order to establish the existence of the global attractor in 2D.
\begin{cor}\label{cor1}
Let the assumptions of Theorem \ref{thm} be satisfied with (H4) replaced by (H7).
Then, if $d=2$ the weak solution $z=[u,\varphi]$ constructed in Theorem \ref{thm}
satisfies the following energy identity
\begin{align}
&
\frac{1}{2}\frac{d}{dt}
\big(\Vert u\Vert^2+\Vert\varphi\Vert^2\big)+\int_\Omega m(\varphi)F''(\varphi)|\nabla\varphi|^2
+\int_\Omega a m(\varphi)|\nabla\varphi|^2
+\nu\Vert\nabla u\Vert^2\nonumber\\
&=\int_\Omega m(\varphi)(\nabla J\ast\varphi-\varphi\nabla
a)\cdot\nabla\varphi +\int_\Omega(a\varphi-J\ast\varphi)
u\cdot\nabla\varphi+\langle h,u\rangle, \label{energeq}
\end{align}
for almost any $t>0$.
Furthermore, if $d=3$ and if (H7) is satisfied with
$p\geq 3$, the weak solution $z$ satisfies the following energy
inequality
\begin{align}
&\frac{1}{2}\big(\Vert u(t)\Vert^2+\Vert\varphi(t)\Vert^2\big)
+\int_0^t\int_\Omega m(\varphi) F''(\varphi)|\nabla\varphi|^2+\int_0^t\int_\Omega a m(\varphi)|\nabla\varphi|^2
\nonumber\\
&+\nu\int_0^t\Vert\nabla u\Vert^2\leq\frac{1}{2}\big(\Vert u_0\Vert^2+\Vert\varphi_0\Vert^2\big)
+\int_0^t\int_\Omega m(\varphi)\big(\nabla J\ast\varphi-\varphi\nabla a\big)\cdot\nabla\varphi
\nonumber\\
&+\int_0^t\int_\Omega\big(a\varphi-J\ast\varphi\big) u\cdot\nabla\varphi+\int_0^t\langle h,u\rangle d\tau,
\qquad\forall t>0.
\label{energineq}
\end{align}
\end{cor}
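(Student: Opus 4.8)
The plan is to run the standard energy computation but testing with $\varphi$ and $u$ themselves, rather than with $\mu$ and $u$ (the latter choice being what yields \eqref{eniden}). First I would take $\psi=\varphi(t)\in V$ in the Cahn--Hilliard formulation \eqref{wdefnd1} and $v=u(t)\in V_{div}$ in the momentum formulation \eqref{wdefnd2}; both are admissible for almost every $t$ by \eqref{propreg1}. In \eqref{wdefnd1} the transport term vanishes, since $(u\varphi,\nabla\varphi)=\tfrac12\int_\Omega u\cdot\nabla(\varphi^2)=0$ by $\mathrm{div}\,u=0$ and $u|_{\partial\Omega}=0$, while the time pairing satisfies $\langle\varphi_t,\varphi\rangle=\tfrac12\frac{d}{dt}\|\varphi\|^2$; this last step is licit whenever $\varphi_t\in L^2(0,T;V')$, i.e. for $d=2$ or for $d=3$ with $p\geq 3$ by \eqref{propreg5}, via the usual Lions--Magenes absolute-continuity lemma.

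Next I would expand the diffusive term using $\mu=a\varphi-J\ast\varphi+F'(\varphi)$, so that $\nabla\mu=a\nabla\varphi+\varphi\nabla a-\nabla J\ast\varphi+F''(\varphi)\nabla\varphi$ in $L^2$, giving
\[
(m(\varphi)\nabla\mu,\nabla\varphi)=\int_\Omega m(\varphi)\big(F''(\varphi)+a\big)|\nabla\varphi|^2+\int_\Omega m(\varphi)(\varphi\nabla a-\nabla J\ast\varphi)\cdot\nabla\varphi .
\]
By (H2) one has $a,\nabla a\in L^\infty$ and $\nabla J\ast\varphi\in L^2$, and $m$ is bounded by (H1); since moreover $F'(\varphi)=\mu-a\varphi+J\ast\varphi\in V$, the whole left-hand side and all but the first term on the right are finite, so $\int_\Omega m(\varphi)F''(\varphi)|\nabla\varphi|^2$ is well defined as the resulting difference and splits off the $\int_\Omega a\,m(\varphi)|\nabla\varphi|^2$ contribution. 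For the momentum equation I would use $b(u,u,u)=0$ and integrate the Korteweg coupling by parts: $-(\varphi\nabla\mu,u)=(\mu\nabla\varphi,u)=\int_\Omega\mu\,u\cdot\nabla\varphi$, and since $\int_\Omega F'(\varphi)u\cdot\nabla\varphi=\int_\Omega u\cdot\nabla F(\varphi)=0$ only the $a\varphi-J\ast\varphi$ part survives, producing precisely $\int_\Omega(a\varphi-J\ast\varphi)u\cdot\nabla\varphi$. Adding the two identities yields \eqref{energeq}. For $d=2$ this completes the proof, because $u_t\in L^2(0,T;V_{div}')$ by \eqref{propreg6} makes $\langle u_t,u\rangle=\tfrac12\frac{d}{dt}\|u\|^2$ rigorous, so both contributions are genuine equalities.

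The main obstacle is the three-dimensional case, where $u_t$ is only known to lie in $L^{4/3}(0,T;V_{div}')$ by \eqref{propreg2}, so $u$ is not an admissible test function for the time-derivative pairing and the momentum identity degrades to an inequality. I would keep the Cahn--Hilliard identity as an equality (valid since $\varphi_t\in L^2(0,T;V')$ when $p\geq3$), and for the momentum balance write the energy identity at the Galerkin level with $v=u_n\in\mathcal{W}_n$ in \eqref{apb2}, integrate in time, and pass to the inferior limit: the dissipative terms $\tfrac12\|u(t)\|^2$ and $\nu\int_0^t\|\nabla u\|^2$ are recovered by weak lower semicontinuity from \eqref{con1}, and the forcing term converges since $h_n\to h$ strongly against $u_n\rightharpoonup u$. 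Adding the momentum inequality to the Cahn--Hilliard equality then gives the time-integrated inequality \eqref{energineq}. I expect the delicate point to be the passage to the limit in the coupling term $\int_0^t(\varphi_n\nabla\mu_n,u_n)$, where one must combine the strong convergences \eqref{con2}, \eqref{con6} of $u_n$ and $\varphi_n$ with the weak convergence of $\nabla\mu_n$ from \eqref{con9}; the genuinely new feature relative to the constant-mobility analysis is the non-constant factor $m(\varphi)$, which is handled through the boundedness (H1) and the strong convergence \eqref{con10} that already underlies \eqref{con11}.
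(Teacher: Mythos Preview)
Your strategy coincides with the paper's: test \eqref{wdefnd1} with $\varphi$ and \eqref{wdefnd2} with $u$, add, and for $d=3$ replace the momentum identity by the Galerkin-level inequality obtained via weak lower semicontinuity. Two points deserve comment.

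\textbf{The chain rule for $F'(\varphi)$.} You assert $\nabla\mu=a\nabla\varphi+\varphi\nabla a-\nabla J\ast\varphi+F''(\varphi)\nabla\varphi$ in $L^2$ and justify the last term by noting $F'(\varphi)\in V$. That gives $\nabla F'(\varphi)\in L^2$, but not the pointwise identity $\nabla F'(\varphi)=F''(\varphi)\nabla\varphi$: (H3)/(H7) impose no growth bound on $F''$, so without further information $F''(\varphi)$ need not even be locally integrable, and defining $\int_\Omega m(\varphi)F''(\varphi)|\nabla\varphi|^2$ ``as the difference'' makes the identity tautological rather than informative. The paper closes this gap by invoking an Alikakos iteration (as in \cite{BH1}) to obtain $\varphi\in L^\infty(\Omega)$; then $F''(\varphi)\in L^\infty$ and the chain rule is standard. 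The same remark applies to your use of $\nabla F(\varphi)=F'(\varphi)\nabla\varphi$ in handling the Korteweg term.

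\textbf{The $d=3$ coupling term.} The paper does not pass to the limit in $\int_0^t(\varphi_n\nabla\mu_n,u_n)$ via the weak convergence of $\nabla\mu_n$. Instead it rewrites this term \emph{at the Galerkin level} using exactly the trick you described for the limit equation, obtaining
\[
-\int_0^t(\varphi_n\nabla\mu_n,u_n)=\int_0^t\!\!\int_\Omega(a\varphi_n-J\ast\varphi_n)\,u_n\cdot\nabla\varphi_n
=-\int_{Q_t}\Big(\tfrac12\varphi_n^2\nabla a-\varphi_n(\nabla J\ast\varphi_n)\Big)\cdot u_n,
\]
and then passes to the limit using only the strong $L^3(Q_t)$ convergences of $\varphi_n$ and $u_n$ (from interpolation between $L^\infty(0,T;H)$ and $L^2(0,T;V)$). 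No weak convergence of $\nabla\mu_n$ is needed. Your closing remark that the mobility $m(\varphi)$ is the ``genuinely new feature'' in this passage is misplaced: the Korteweg force in \eqref{wdefnd2} does not involve $m$, so the momentum-inequality argument is unaffected by non-constant mobility.
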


\begin{proof}
If $d=2$, we can take $\psi=\varphi$ and $v=u$ as test functions in the weak formulation
\eqref{wdefnd1} and \eqref{wdefnd2}, respectively. This is allowed due to the regularity properties
\eqref{propreg5} and \eqref{propreg6}.
Then, we add the resulting identities and we observe that, since by
an Alikakos' iteration technique as in \cite[Theorem 2.1]{BH1} it can be shown that
$\varphi\in L^\infty(\Omega)$ and since $F\in C^2(\mathbb{R})$,
we can write $\nabla F'(\varphi)=F''(\varphi)\nabla\varphi$. Then,
on account of this identity and of \eqref{propreg7}, the second term on the left hand side of \eqref{wdefnd1} can be rewritten and \eqref{energeq} immediately follows.

Let $d=3$ and assume that (H7) holds with $p\geq 3$. Then, due to the second of \eqref{propreg5}
we can take $\psi=\varphi$ as test function in \eqref{wdefnd1} and get
\begin{align}
&\frac{1}{2}\frac{d}{dt}\Vert\varphi\Vert^2+\big(m(\varphi)\nabla\mu,\nabla\varphi\big)=0.\nonumber
\end{align}
By integrating this identity between 0 and $t$ and by rewriting the term $(m(\varphi)\nabla\mu,\nabla\varphi)$
as done above in the case $d=2$ we obtain
\begin{align}
&\frac{1}{2}\Vert\varphi(t)\Vert^2
+\int_0^t\int_\Omega m(\varphi) F''(\varphi)|\nabla\varphi|^2+\int_0^t\int_\Omega a m(\varphi)|\nabla\varphi|^2
\nonumber\\
&+\int_0^t\int_\Omega m(\varphi)\big(\varphi\nabla a-\nabla J\ast\varphi\big)\cdot\nabla\varphi=\frac{1}{2}\Vert\varphi_0\Vert^2.
\label{halfid1}
\end{align}

On the other hand we take $v=u_n$ in the second equation \eqref{apb2}
of the Faedo-Galerkin approximating problem and integrate the resulting identity to get
\begin{align}
&\frac{1}{2}\Vert u_n(t)\Vert^2+\nu\int_0^t\Vert\nabla u_n\Vert^2 d\tau=
\frac{1}{2}\Vert u_{0n}\Vert^2-\int_0^t\big(\varphi_n\nabla\mu_n,u_n\big)d\tau
+\int_0^t\langle h,u_n\rangle d\tau\nonumber\\
&=\frac{1}{2}\Vert u_{0n}\Vert^2+\int_0^t\int_\Omega\big(a\varphi_n-J\ast\varphi_n\big)\nabla\varphi_n\cdot u_n
+\int_0^t\langle h,u_n\rangle d\tau.
\label{halfid}
\end{align}
We can now pass to the limit in \eqref{halfid}
and use the second weak convergence \eqref{con1} and \eqref{con2}. In particular, observe that
\begin{align}
&\int_0^t\int_\Omega\big(a\varphi_n-J\ast\varphi_n\big)\nabla\varphi_n\cdot u_n
=-\int_{Q_t}\Big(\frac{1}{2}\varphi_n^2\nabla a-\varphi_n\big(\nabla J\ast\varphi_n\big)\Big)\cdot u_n\nonumber\\
&\to-\int_{Q_t}\Big(\frac{1}{2}\varphi^2\nabla a-\varphi\big(\nabla J\ast\varphi\big)\Big)\cdot u
=\int_0^t\int_\Omega\big(a\varphi-J\ast\varphi\big)\nabla\varphi\cdot u,\nonumber
\end{align}
where this last convergence is a consequence of the fact that, due to \eqref{con1}, \eqref{con2},
\eqref{con5}, \eqref{con6}
and interpolation, we have $\varphi_n\to\varphi$ strongly in $L^3(Q_t)$ and $u_n\to u$
strongly in $L^3(Q_t)^3$ (recall that $\nabla a\in L^\infty$). Hence we get
\begin{align}
&\frac{1}{2}\Vert u(t)\Vert^2
+\nu\int_0^t\Vert\nabla u\Vert^2\leq\frac{1}{2}\Vert u_0\Vert^2
+\int_0^t\int_\Omega\big(a\varphi-J\ast\varphi\big) u\cdot\nabla\varphi+\int_0^t\langle h,u\rangle d\tau.
\label{halfid2}
\end{align}
Summing \eqref{halfid1} and \eqref{halfid2} we deduce \eqref{energineq}.
\end{proof}

\begin{oss}
{\upshape The theorems proven in this section still hold when the viscosity smoothly depends on $\varphi$
(see \cite{FG1}). Moreover, they allow us to generalize to the variable
(non-degenerate) mobility the results obtained in \cite{FG1} on the longtime
behavior. More precisely, the existence of the global attractor
and the existence of a trajectory attractor in the cases $d=2$ and $d=3$, respectively.}
\end{oss}

\section{Degenerate mobility}\setcounter{equation}{0}
\label{sec:deg}

In this section we consider a mobility $m$ which degenerates at $\pm1$
and we assume that the double-well potential $F$ is singular (e.g.
logarithmic like) and defined in $(-1,1)$. More precisely, we
assume that $m\in C^1([-1,1])$, $m\geq 0$ and that $m(s)=0$ if and
only if $s=-1$ or $s=1$. Furthermore, we suppose
that $m$ and $F$ fulfill the condition
\begin{description}

\item [(A1)] $F\in C^2(-1,1)$ and
\begin{align}
& mF''\in C([-1,1]).\nonumber
\end{align}

\end{description}
We point out that (A1) is a typical condition which
arises in the Cahn-Hilliard equation with degenerate mobility (see \cite{EG,GL1,GL2,GZ}).

As far as $F$ is concerned
we assume that it can be written in the following form
$$F=F_1+F_2,$$
where the singular component $F_1$ and
the regular component $F_2\in
C^2([-1,1])$ satisfy the following assumptions.

\begin{description}
\item[(A2)]
There exist $a_2>4(a^\ast-a_\ast-b_2)$, where $b_2:=\min_{[-1,1]}F_2''$,
 and $\e_0>0$ such that
\begin{align*}
&F_1^{''}(s)\geq a_2,\qquad\forall s\in(-1,-1+\e_0]\cup[1-\e_0,1).
\end{align*}


\item[(A3)] There exists $\e_0>0$ such that $F_1^{''}$ is
    non-decreasing in $[1-\e_0,1)$ and non-increasing in $(-1,-1+\e_0]$.
\item[(A4)]There exists
$c_0>0$ such that
\begin{align}
&F''(s)+a(x)\geq c_0,\qquad\forall s\in(-1,1),\qquad\mbox{a.a. }x\in\Omega.
\end{align}
\end{description}


The constants $a^\ast$ and $a_\ast$ are given in (H2) and (H4), respectively, and
 the assumption on the external force $h$ is still (H6).

 \begin{oss}{\upshape
 It is easy to see that (A1)--(A4) are satisfied in the physically
 relevant case where the mobility and the double-well potential
 are given by
 \begin{align}
& m(s)=k_1(1-s^2),\qquad
F(s)=-\frac{\theta_c}{2}s^2+\frac{\theta}{2}\big((1+s)\log(1+s)+(1-s)\log(1-s)\big),
\label{potlog}
 \end{align}
where $0<\theta<\theta_c$. Indeed, setting
$F_1(s):=(\theta/2)\big((1+s)\log(1+s)+(1-s)\log(1-s)\big)$ and
$F_2(s)=-(\theta_c/2)s^2$, then we have $mF_1''=k_1\theta>0$ and so
(A1) is fulfilled. Moreover $F_1$ satisfies also (A2) and (A3),
while (A4) holds if and only if $\inf_\Omega a>\theta_c-\theta$.
Another example is given by taking
 \begin{align}
& m(s)=k(s)(1-s^2)^m,\qquad F(s)=-k_2 s^2+F_1(s) \nonumber
 \end{align}
where $k\in C^1([-1,1])$ such that $0<k_3\leq k(s)\leq k_4$ for
all $s\in[-1,1]$, and $F_1$ is a $C^2(-1,1)$ convex function such
that
\begin{align}
&F_1''(s)=l(s)(1-s^2)^{-m},\qquad\forall s\in(-1,1),\nonumber
\end{align}
where $m\geq 1$ and $l\in C^1([-1,1])$.}
 \end{oss}

 \begin{oss}
 {\upshape
Note that (A4),
which is equivalent to the condition $\inf_\Omega a>-\inf_{(-1,1)}F''$,
 is more general than \cite[(A6)]{FG2}.
 More precisely, \cite[(A6)]{FG2} implies (A4), while (A4)
 implies \cite[(A6)]{FG2} provided we have $\inf_{(-1,1)}(F_1''+F_2'')
 =\inf_{(-1,1)}F_1''+\min_{[-1,1]}F_2''$. For example, consider
 the following double-well potential
 \begin{align}
 &F(s)=-\frac{\theta_c}{2}s^2-\frac{\theta_2}{12}s^4+\frac{\theta}{2}\big((1+s)\log(1+s)+(1-s)\log(1-s)\big),
 \nonumber
 \end{align}
 where $0<\theta<\theta_c$ and $0<\theta<\theta_2$.
 Then, it easy to see that
 \cite[(A6)]{FG2} is satisfied iff $\inf_\Omega a>\theta_c+\theta_2-\theta$,
 while (A4) requires the weaker condition
 $\inf_\Omega a>\theta_c+\theta_2-2\sqrt{\theta\theta_2}$.
}
\end{oss}

If the mobility degenerates we are no longer able to control
the gradient of the chemical potential $\mu$ in some $L^p$
space. For this reason, and also in order to pass to the limit
in the approximate problem considered in the proof of the existence result,
we shall have to suitably reformulate
the definition of weak solution in such a way that $\mu$
does not appear anymore (cf. \cite{EG}).

\begin{defn}
\label{wsdeg}
Let $u_0\in G_{div}$, $\varphi_0\in H$ with $F(\varphi_0)\in L^1(\Omega)$ and $0<T<+\infty$ be given.
A couple $[u,\varphi]$ is a weak solution to \eqref{sy1}-\eqref{sy6} on $[0,T]$ corresponding to $[u_0,\varphi_0]$
if
\begin{itemize}
\item  $u$, $\varphi$ satisfy
\begin{align}
&u\in L^{\infty}(0,T;G_{div})\cap L^2(0,T;V_{div}),\label{reg1}\\
&u_t\in L^{4/3}(0,T;V_{div}'),\qquad\mbox{if}\quad d=3,\label{reg2}\\
&u_t\in L^2(0,T;V_{div}'),\qquad\mbox{if}\quad d=2,\label{reg3}\\
&\varphi\in L^{\infty}(0,T;H)\cap L^2(0,T;V),\label{reg4}\\
&\varphi_t\in L^2(0,T;V'),\label{reg5}
\end{align}
and
\begin{align}
&\varphi\in L^{\infty}(Q_T),\qquad|\varphi(x,t)|\leq 1\quad\mbox{a.e. }(x,t)\in Q_T:=\Omega\times(0,T);\label{reg6}
\end{align}

\item for every $\psi\in V$, every $v\in V_{div}$ and for almost
any $t\in(0,T)$ we have
\begin{align}
&\langle\varphi_t,\psi\rangle+\int_\Omega m(\varphi)F''(\varphi)\nabla\varphi\cdot\nabla\psi+
\int_\Omega m(\varphi) a \nabla\varphi\cdot\nabla\psi\nonumber\\
&+\int_\Omega m(\varphi)(\varphi\nabla a-\nabla J\ast\varphi)\cdot\nabla\psi=(u\varphi,\nabla\psi),
\label{wform1}\\
&\langle u_t,v\rangle+\nu(\nabla u,\nabla v)+b(u,u,v)=\big((a\varphi-J\ast\varphi)\nabla\varphi,v\big)+\langle h,v\rangle;\label{wform2}
\end{align}

\item the initial conditions $u(0)=u_0$, $\varphi(0)=\varphi_0$ hold
 (cf. Remark \ref{inconds}).

\end{itemize}
\end{defn}

We now state the existence result for the degenerate mobility
case. To this aim we need to introduce the {\itshape \color{black} entropy \color{black}} function $M\in
C^2(-1,1)$ defined by
\begin{align}
&m(s)M''(s)=1,\qquad M(0)=M'(0)=0.\nonumber
\end{align}

\begin{thm}
\label{Theor2} Assume that (A1)-(A4) and (H2), (H6) are satisfied.
Let $u_0\in G_{div}$, $\varphi_0\in L^{\infty}(\Omega)$ such that
$F(\varphi_0)\in L^1(\Omega)$ and $M(\varphi_0)\in L^1(\Omega)$.
Then, for every $T>0$ there exists a weak solution
$z:=[u,\varphi]$ to \eqref{sy1}-\eqref{sy6} on $[0,T]$
corresponding to $[u_0,\varphi_0]$  in the sense of Definition \ref{wsdeg}
\color{black}
such that $\overline{\varphi}(t)=\overline{\varphi_0}$ for all $t\in[0,T]$
and
\begin{align}
&\varphi\in L^{\infty}(0,T;L^p(\Omega)),
\label{phiLq}
\end{align}
where $p\leq 6$ for $d=3$ and $2\leq p<\infty$ for $d=2$. In
addition, if $d=2$, the weak solution $z:=[u,\varphi]$ satisfies
the energy equation \eqref{energeq}, and, if
$d=3$, $z$ satisfies the energy inequality \eqref{energineq}.

\end{thm}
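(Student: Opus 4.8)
The plan is to realize $[u,\varphi]$ as a limit of the non-degenerate solutions supplied by Theorem \ref{thm}. For $\e\in(0,1)$ I would introduce a regularized mobility $m_\e\in C^{0,1}_{loc}(\mathbb{R})$ and a regularized potential $F_\e$ that fulfil the hypotheses (H1), (H3), (H5), (H6), (H7) of Section~\ref{sec:nondeg}, designed so that $m_\e\equiv m$ and $F_\e\equiv F$ on $[-1+\e,1-\e]$, while outside this interval $m_\e$ is a positive constant (of size $\e$) and $F_\e$ is extended to a $C^{2,1}$ function of controlled polynomial growth that still satisfies $F_\e''+a\ge c_0$. Assumption (A1) lets one keep the product $m_\e F_\e''$ uniformly bounded. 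Let $M_\e\in C^2(-1,1)$ be defined by $m_\e M_\e''=1$, $M_\e(0)=M_\e'(0)=0$, so that $M_\e\nearrow M$ pointwise and $M_\e(\varphi_0)\le M(\varphi_0)$. Since $F_\e(\varphi_0)\in L^1(\Omega)$ with uniformly bounded initial energy, Theorem \ref{thm} yields for each $\e$ a weak solution $[u_\e,\varphi_\e]$ of the regularized system obeying \eqref{ei}, and, by Corollary \ref{cor1}, the energy relations \eqref{energeq}--\eqref{energineq} written with $m_\e,F_\e$.

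Next I would collect the $\e$-uniform bounds. From \eqref{ei} one gets $u_\e$ bounded in $L^\infty(0,T;G_{div})\cap L^2(0,T;V_{div})$, $\varphi_\e$ bounded in $L^\infty(0,T;H)$, $\int_\Omega F_\e(\varphi_\e)$ bounded and $\sqrt{m_\e(\varphi_\e)}\nabla\mu_\e$ bounded in $L^2(Q_T)$. The decisive estimate is the entropy bound: testing the $\varphi_\e$-equation with $M_\e'(\varphi_\e)$ (the convective contribution drops because $\mathrm{div}\,u_\e=0$, and $m_\e M_\e''=1$) gives
\[
\frac{d}{dt}\int_\Omega M_\e(\varphi_\e)+\int_\Omega\big(F_\e''(\varphi_\e)+a\big)|\nabla\varphi_\e|^2=-\int_\Omega\varphi_\e\nabla a\cdot\nabla\varphi_\e+\int_\Omega(\nabla J*\varphi_\e)\cdot\nabla\varphi_\e,
\]
so that, using (A4) to bound $F_\e''+a\ge c_0$ and absorbing the two nonlocal terms via (H2) and Young's inequality, I obtain $M_\e(\varphi_\e)$ bounded in $L^\infty(0,T;L^1(\Omega))$ and $\varphi_\e$ bounded in $L^2(0,T;V)$. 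Because (A1) makes $m_\e F_\e''$ uniformly bounded, the reformulated flux $m_\e(\varphi_\e)\nabla\mu_\e=m_\e F_\e''(\varphi_\e)\nabla\varphi_\e+m_\e a\nabla\varphi_\e+m_\e(\varphi_\e\nabla a-\nabla J*\varphi_\e)$ is bounded in $L^2(Q_T)$; a comparison in the regularized equations then gives $\varphi_{\e,t}$ and $u_{\e,t}$ bounded in $L^{4/3}(0,T;V')$ and $L^{4/3}(0,T;V_{div}')$ (with better exponents if $d=2$), which suffices for compactness.

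I would then pass to the limit. By Aubin--Lions I extract $\varphi_\e\to\varphi$ strongly in $L^2(0,T;H)$ and a.e.\ in $Q_T$, and $u_\e\to u$ strongly in $L^2(0,T;G_{div})$, with the expected weak limits for gradients, time derivatives and for $m_\e(\varphi_\e)\nabla\mu_\e$. The constraint $|\varphi|\le 1$, i.e.\ \eqref{reg6}, follows from the uniform $L^\infty(0,T;L^1)$ bound on $M_\e(\varphi_\e)$ together with the a.e.\ convergence and Fatou's lemma, since $M_\e\nearrow M$ and $M$ is infinite outside $[-1,1]$; this gives \eqref{phiLq} for free, as $\varphi\in L^\infty(Q_T)$. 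For the nonlinear flux I would use that, by (A1) and dominated convergence, $m_\e(\varphi_\e)F_\e''(\varphi_\e)\to m(\varphi)F''(\varphi)$ strongly in every $L^q(Q_T)$, $q<\infty$, while $\nabla\varphi_\e\rightharpoonup\nabla\varphi$ weakly in $L^2(Q_T)$, so the product converges to $m(\varphi)F''(\varphi)\nabla\varphi$; the lower order and Navier--Stokes terms pass to the limit exactly as in \cite[Proof of Theorem 1]{CFG}, yielding \eqref{wform1}, \eqref{wform2}. Mass conservation $\overline{\varphi}(t)=\overline{\varphi_0}$ follows by taking $\psi\equiv1$ in \eqref{wform1}; the regularity $\varphi_t\in L^2(0,T;V')$ (and $u_t\in L^2(0,T;V_{div}')$ for $d=2$) is then recovered by comparison in \eqref{wform1}, \eqref{wform2}, since $|\varphi|\le1$ makes $u\varphi\in L^2(0,T;H)$ and $m(\varphi)F''(\varphi)\nabla\varphi\in L^2(Q_T)$; finally \eqref{energeq} ($d=2$) and \eqref{energineq} ($d=3$) are obtained by passing to the limit in Corollary \ref{cor1}, using weak lower semicontinuity of the dissipative terms when $d=3$.

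The main obstacle is the identification of $\lim_\e m_\e(\varphi_\e)F_\e''(\varphi_\e)\nabla\varphi_\e$, the step where the degeneracy is absorbed, since $\nabla\mu_\e$ itself is uncontrolled. Everything rests on (A1): it makes $m_\e F_\e''$ uniformly bounded (hence the flux weakly compact in $L^2$ and the time derivatives tame), and it makes $m_\e(\varphi_\e)F_\e''(\varphi_\e)$ strongly convergent up to the pure phases $\varphi=\pm1$, so that the product of a strongly convergent factor and a weakly convergent gradient has the expected limit. The second delicate point is the entropy estimate and the resulting bound $|\varphi|\le1$: one must design $m_\e$ so that $M_\e\nearrow M$ and control the nonlocal terms $m_\e(\varphi_\e\nabla a-\nabla J*\varphi_\e)\cdot\nabla\varphi_\e$ using only the coercivity (A4) and the kernel bounds (H2), with no help whatsoever from $\nabla\mu$.
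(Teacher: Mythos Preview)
Your strategy is exactly the paper's: truncate $m$ and $F$ outside $[-1+\e,1-\e]$, apply Theorem~\ref{thm} to each regularized problem, supplement the energy inequality with the entropy estimate obtained by testing with $M_\e'(\phie)$, and pass to the limit using that (A1) keeps $m_\e F_\e''$ uniformly bounded and strongly convergent. One point worth sharpening: the inclusion $\varphi\in L^\infty(0,T;L^p(\Omega))$ does \emph{not} come ``for free'' from $|\varphi|\le 1$ a.e.\ in $Q_T$, since Bochner measurability of $t\mapsto\varphi(t)\in L^p(\Omega)$ must be supplied separately; it comes from $\varphi\in L^2(0,T;V)$, and this is precisely why the statement restricts to $p\le 6$ when $d=3$ (the paper needs a separate Corollary, with extra assumptions, to reach $p>6$).
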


\color{black}
\begin{oss}
{\upshape
The potential $F$ and the entropy $M$ are not independent. Indeed, assumption (A1) implies that there exists
a constant $\gamma_0>0$ such that $|m(s)F''(s)|\leq \gamma_0$, for all $s\in[-1,1]$.
Combining this estimate with the definition of $M$ we get $|F''(s)|\leq\gamma_0 M''(s)$, for all $s\in(-1,1)$.
Thus we get
\begin{align}
&|F(s)|\leq|F(0)|+|F'(0)||s|+\gamma_0 M(s),\qquad\forall s\in (-1,1).\label{yu}
\end{align}
Therefore, in the statement of Theorem \ref{Theor2}, condition $F(\varphi_0)\in L^1(\Omega)$ is actually a consequence
of the entropy assumption $M(\varphi_0)\in L^1(\Omega)$.
}
\end{oss}
\color{black}

\begin{proof}
Let us consider the approximate problem $P_{\epsilon}$: find a weak solution $[\ue,\phie]$ to
\begin{align}
&\phie'+\ue\cdot\nabla\phie=\mbox{div}(m_\e(\phie)\nabla\mue),\label{Pbe1}\\
&\ue'-\nu\Delta\ue+(\ue\cdot\nabla)\ue
+\nabla\pi_{\epsilon}
=\mue\nabla\phie+h,\label{Pbe2}\\
&\mue=a\phie-J\ast\phie+\Fe'(\phie),\label{Pbe3}\\
&\mbox{div}(\ue)=0,\label{Pbe4}\\
&\frac{\partial\mue}{\partial n}=0,\quad\ue=0,
\quad\mbox{on }\partial\Omega,\label{Pbe5}\\
&\ue(0)=u_0,\quad\phie(0)=\varphi_0,\quad\mbox{in }\Omega.\label{Pbe6}
\end{align}
Problem $P_{\epsilon}$ is obtained from \eqref{sy1}-\eqref{sy6}
by replacing the singular potential $F$ with a smooth potential
$\Fe$ and the degenerate mobility $m$ with a non-degenerate one
$\me$. \color{black} In particular, $\Fe$ is represented by
\color{black}
$$\Fe=\Fie+F_{2\e},$$
where $\Fie$  and $F_{2\e}$ are defined by
\begin{align}
&\Fie^{''}(s)=\left\{\begin{array}{lll}
F_1^{''}(1-\e),\qquad s\geq 1-\e\\
F_1^{''}(s),\qquad|s|\leq 1-\e\\
F_1^{''}(-1+\e),\qquad s\leq -1+\e
\end{array}\right. \label{approxpot1} \\
& F_{2\e}^{''}(s)=\left\{\begin{array}{lll}
F_2^{''}(1-\e),\qquad s\geq 1-\e\\
F_2^{''}(s),\qquad|s|\leq 1-\e\\
F_2^{''}(-1+\e),\qquad s\leq -1+\e,
\end{array}\right.
 \label{approxpot2}
\end{align}
with $\Fie(0)=F_1(0)$, $\Fie'(0)=F_1'(0)$,
$F_{2\e}(0)=F_2(0)$, $F_{2\e}'(0)=F_2'(0)$.

The approximate non-degenerate mobility is given by
\begin{equation}
\me(s)=\left\{\begin{array}{lll}
m(1-\e),\qquad s\geq 1-\e\\
m(s),\qquad|s|\leq 1-\e\\
m(-1+\e),\qquad s\leq -1+\e.
\end{array}\right.
\label{approxmob}
\end{equation}
Assumption (A3) implies that
\begin{align}
&\Fie(s)\leq F_1(s),\qquad\forall
s\in(-1,1),\qquad\forall\e\in(0,\e_0]. \label{est26}
\end{align}
On the other hand, from the definition of $F_{2\e}$ we have, for
$s>1-\e$,
\begin{align}
&F_{2\e}(s)=F_2(1-\e)+F_2'(1-\e)\big(s-(1-\e)\big)+\frac{1}{2}F_2''(1-\e)\big(s-(1-\e)\big)^2,
\label{F2eps1}
\end{align}
(a similar expression holds for $s<-1+\e$) and $F_{2\e}(s)=F_2(s)$
for $|s|\leq 1-\e$. Since $F_2\in C^2([-1,1])$, then we deduce
that there exist two positive constants $L_1,L_2$, which are
independent of $\e$, such that
\begin{align}
&|F_{2\e}(s)|\leq L_1 s^2+L_2,\qquad\forall s\in\mathbb{R}.
\label{F2eps}
\end{align}
 Therefore, by using the assumption on the initial datum $\varphi_0$
 and \eqref{est26}, \eqref{F2eps}
 we have
\begin{align}
&\int_\Omega\Fe(\varphi_0)\leq\int_\Omega F_1(\varphi_0)
+L_1\Vert\varphi_0\Vert^2+L_2<\infty,\qquad\forall\e\in(0,\e_0].
\label{est22}
\end{align}
Furthermore, we can see that by assumption (A2) there is a $\delta_0\in\mathbb{R}$ such that
\begin{align}
&\Fe(s)\geq\delta s^2-\delta_0,\qquad\forall s\in\mathbb{R},\qquad\forall\e\in(0,\e_0],
\label{est20}
\end{align}
where $0<\delta<a_2/8+b_2/2$.
Indeed, (A2) implies that there exist $a_1^+,a_1^-,a_0\in\mathbb{R}$ such that
$F_1'(s)\geq a_1^+$ for all $s\in[1-\e_0,1)$, $F_1'(s)\leq a_1^-$ for all $s\in(-1,-1+\e_0]$,
and $F_1(s)\geq a_0$ for all $s\in(-1,1)$. Hence, by using definition \eqref{approxpot1},
for $s\geq 1-\e$ we get
\begin{align}
&\Fie(s)=F_1(1-\e)+F_1'(1-\e)\big(s-(1-\e)\big)+\frac{1}{2}F_1''(1-\e)\big(s-(1-\e)\big)^2\nonumber\\
&\geq a_0'+\frac{a_2}{4}\big(s-(1-\e)\big)^2\geq\frac{a_2}{8}s^2+a_0'-\frac{a_2}{2},\label{est19}
\end{align}
provided that $\e\in(0,\e_0]$. For $|s|\leq 1-\e$ we have $\Fie(s)=F_1(s)\geq a_0\geq (a_2/8)s^2-a_2/8+a_0$.
For $s\leq -1+\e$ we can argue as in \eqref{est19}.
On the other hand, from \eqref{F2eps1} we have
\begin{align}
&F_{2\e}(s)\geq \Big(\delta-\frac{a_2}{8}\Big)s^2-\delta_1,\qquad\forall s\in \mathbb{R},\qquad\forall\e\in(0,1),
\label{F2eps2}
\end{align}
where $\delta$ is taken as above and $\delta_1$ is a nonnegative
constant (depending on $b_0:=\min_{[-1,1]}F_2$, $b_1:=\min_{[-1,1]}F_2'$ and $b_2$).
Combining \eqref{est19} with \eqref{F2eps2} we deduce \eqref{est20}.


Moreover, (A4)  immediately  implies that, for $\e\in(0,\e_0]$, there holds
\begin{align}
\Fe^{\prime\prime}(s)+a(x)\geq c_0,\qquad\forall s\in\mathbb{R},\quad\mbox{a.e. }x\in\Omega.
\label{est21}
\end{align}
Indeed, recall that for $s\gtrless\pm 1\mp\e$ we have $\Fie''(s)=F_1''(\pm 1\mp\e)$,
$F_{2\e}''(s)=F_2''(\pm 1\mp\e)$,
and for $|s|\leq 1-\e$ we have $\Fie''(s)=F_1''(s)$, $F_{2\e}''(s)=F_2''(s)$.

We can now check that assumptions (H1)--(H6) of Theorem \ref{thm} for Problem P$_\e$
(with a fixed $\e\in(0,\e_0]$)
are satisfied. In particular,  due to (A2),
we can choose $\delta$ such that
$(a^\ast-a_\ast)/2<\delta<a_2/8+b_2/2$
 and so
(H4) is ensured by \eqref{est20}, while (H5) is satisfied with $r=2$.
Theorem \ref{thm} therefore entails that Problem P$_\e$ admits a weak solution
$z_\e=[\ue,\phie]$ satisfying \eqref{propreg1}--\eqref{propreg3}
and the energy inequality
\begin{equation}
\mathcal{E}_\e\big(\ze(t)\big) +\int_0^t\Big(\nu\|\nabla\ue \|^2+\|\sqrt{\me(\phie)}\nabla\mue \|^2\Big)d\tau
\leq\mathcal{E}_\e(z_0)+\int_0^t\langle h(\tau),\ue(\tau) \rangle d\tau,\label{eiappeps}
\end{equation}
for every $t>0$,
where
\begin{align}
&\mathcal{E}_\e(\ze):=\frac{1}{2}\Vert\ue\Vert^2+\frac{1}{2}\Vert\sqrt{a}\phie\Vert^2
-\frac{1}{2}(\phie,J\ast\phie)+\int_\Omega\Fe(\phie).\nonumber
\end{align}
Now, using \eqref{est22} and \eqref{est20}, from \eqref{eiappeps} we immediately obtain the following
uniform (with respect to $\e\in(0,\e_0]$) estimates
\begin{align}
&\|\ue\|_{L^{\infty}(0,T;G_{div})\cap L^2(0,T;V_{div})}\leq C,\label{estep1}\\
&\|\phie\|_{L^{\infty}(0,T;H)}\leq C,\label{estep2}\\
&\| \sqrt{\me(\phie)}\nabla\mue\|_{L^2(0,T;H)}\leq C,\label{estep3}
\end{align}
where the positive constant $C$ can possibly change
from line to line but is always independent of $\e$.
 Now, test
\eqref{Pbe1} by $\psi=M_\e'(\phie)$, where $M_\e$ is a $C^2$
function such that $\me(s)M_\e''(s)=1$ and $M_\e(0)=M_\e'(0)=0$.
We get
\begin{align}
&\frac{d}{dt}\int_\Omega
M_\e(\phie)+\int_\Omega\me(\phie)\nabla\mue\cdot
M_\e''(\phie)\nabla\phie=\int_\Omega\ue\phie\cdot
M_\e''(\phie)\nabla\phie=0, \label{entropep}
\end{align}
where the last identity is due to
\eqref{Pbe4}. Therefore \eqref{entropep} yields
\begin{align}
&\frac{d}{dt}\int_\Omega M_\e(\phie)+\int_\Omega\nabla\mue\cdot
\nabla\phie=0.\nonumber
\end{align}
On account of \eqref{Pbe3}, we obtain
\begin{align}
&\frac{d}{dt}\int_\Omega M_\e(\phie)+
\int_\Omega\Big(\big(a+\Fe''(\phie)\big)|\nabla\phie|^2+\phie\nabla
a\cdot\nabla\phie-\nabla
J\ast\phie\cdot\nabla\phie\Big)=0.\label{entrop2}
\end{align}

By using \eqref{est21} and \eqref{estep2}, from \eqref{entrop2} we hence infer
\begin{align}
&\frac{d}{dt}\int_\Omega
M_\e(\phie)+\frac{c_0}{2}\Vert\nabla\phie\Vert^2\leq
c_J\Vert\phie\Vert^2\leq C,\nonumber
\end{align}
where $c_J$ is a positive constant depending on $J$ only. Therefore,
on account of the fact that for $\e$ small enough we have
 $M_\e(s)\leq M(s)$ for all $s\in(-1,1)$,
and recalling that $\int_\Omega M(\varphi_0)<\infty$, we deduce
the bounds (see also \eqref{estep2})
\begin{align}
&\Vert\phie\Vert_{L^2(0,T;V)}\leq C,\label{estep4}\\
&\Vert M_\e(\phie)\Vert_{L^\infty(0,T;L^1(\Omega))}\leq
C.\label{estep5}
\end{align}

Let us now establish the estimates for $\phie'$ and $\ue'$.
Let us first consider the case $d=3$
and start from $\phie'$.
From the variational formulation of \eqref{Pbe1} we
get
\begin{align}
&|\langle\phie',\psi\rangle|\leq\Vert\me(\phie)\nabla\mue\Vert\Vert\nabla\psi\Vert
+\Vert\ue\Vert_{L^6(\Omega)}\Vert\phie\Vert_{L^3(\Omega)}\Vert\nabla\psi\Vert\nonumber\\
&\leq c\Vert\sqrt{\me(\phie)}\nabla\mue\Vert\Vert\nabla\psi\Vert+c\Vert\nabla\ue\Vert
\Vert\phie\Vert^{1/2}\Vert\phie\Vert_{L^6(\Omega)}^{1/2}
\Vert\nabla\psi\Vert\nonumber\\
&\leq C\big(\Vert\sqrt{\me(\phie)}\nabla\mue\Vert+\Vert\nabla\ue\Vert\Vert\phie\Vert_V^{1/2}\big)
\Vert\nabla\psi\Vert,\qquad\forall\psi\in V,
\label{est23}
\end{align}
where we have used \eqref{estep2}. Then, on account of \eqref{estep1}, \eqref{estep3} and
\eqref{estep4}, from \eqref{est23} we obtain
\begin{align}
&\Vert\phie'\Vert_{L^{4/3}(0,T;V')}\leq C.
\label{estep6}
\end{align}
In order to deduce a bound for $\ue'$, observe first that we have
\begin{align}
&\langle\mue\nabla\phie,v\rangle=\langle(a\phie-J\ast\phie)\nabla\phie,v\rangle,\qquad\forall v\in V_{div}.
\nonumber
\end{align}
Thus, by \eqref{estep2}, we have that
\begin{align}
&|\langle\mue\nabla\phie,v\rangle|\leq\Vert a\phie-J\ast\phie\Vert_{L^3(\Omega)}\Vert\nabla\phie\Vert\Vert v\Vert_{L^6(\Omega)}\nonumber\\
&\leq
c_J\Vert\phie\Vert_{L^3(\Omega)}\Vert\phie\Vert_V
\Vert v\Vert_{V_{div}}\leq c_J\Vert\phie\Vert^{1/2}\Vert\phie\Vert_{L^6(\Omega)}^{1/2}
\Vert\phie\Vert_V\Vert v\Vert_{V_{div}}\nonumber\\
&\leq C\Vert\phie\Vert_V^{3/2}\Vert v\Vert_{V_{div}},
\label{est24}
\end{align}
Hence, \eqref{estep4} entails the estimate for the Korteweg force in \eqref{Pbe2}
\begin{align}
&\Vert\mue\nabla\phie\Vert_{L^{4/3}(0,T;V_{div}')}\leq C.\nonumber
\end{align}
The estimates for the other terms in the variational formulation of \eqref{Pbe2}
can be deduced by standard arguments for the 3D Navier-Stokes and finally we obtain
\begin{align}
&\Vert \ue'\Vert_{L^{4/3}(0,T;V_{div}')}\leq C.\label{estep7}
\end{align}
In the case $d=2$, by arguing as in \eqref{est23} and \eqref{est24}
and again using \eqref{estep1}--\eqref{estep3} and \eqref{estep4},
it not difficult to see that the following estimates hold
\begin{align}
&\Vert\ue'\Vert_{L^{2-\gamma}(0,T;V_{div}')}\leq C,\qquad
\Vert\phie'\Vert_{L^{2-\delta}(0,T;V')}\leq C,
\qquad\forall\gamma,\delta\in(0,1).\label{estep8}
\end{align}
From the estimates above and using standard compactness results we deduce that
there exist $u\in L^{\infty}(0,T;G_{div})\cap L^2(0,T;V_{div})$ and
$\varphi\in L^{\infty}(0,T;H)\cap L^2(0,T;V)$ such that,
up to a (not relabeled) subsequence we have
\begin{align}
& \ue\rightharpoonup u\qquad\mbox{weakly}^{\ast}\mbox{ in } L^{\infty}(0,T;G_{div}),
\quad\mbox{weakly in }L^2(0,T;V_{div}),\label{conve1}\\
& \ue\to u\quad\mbox{strongly in }L^2(0,T;G_{div}),\quad\mbox{a.e. in }Q_T,\label{conve2}\\
& \ue'\rightharpoonup u_t\quad\mbox{weakly in }L^{4/3}(0,T;V_{div}'),\quad\mbox{if }\: d=3,\label{conve3}\\
& \ue'\rightharpoonup u_t
\qquad\mbox{weakly in }L^{2-\gamma}(0,T;V_{div}'),\qquad\gamma\in(0,1),\quad\mbox{if }\: d=2,\label{conve4}\\
& \phie\rightharpoonup\varphi\quad\mbox{weakly}^{\ast}\mbox{ in }L^{\infty}(0,T;H),
\qquad\mbox{weakly in }L^2(0,T;V),\label{conve5}\\
& \phie\to\varphi\quad\mbox{strongly in }L^2(0,T;H),\qquad\mbox{a.e. in }Q_T,\label{conve6}\\
& \phie'\to\varphi_t\qquad\mbox{weakly in }L^{4/3}(0,T;V'),\quad\mbox{if }\: d=3,\label{conve12}\\
& \phie'\to\varphi_t\qquad\mbox{weakly in
}L^{2-\delta}(0,T;V'),\qquad\delta\in(0,1),\quad\mbox{if }\: d=2.
\label{conve7}
\end{align}
Let us now show that $|\varphi|\leq 1$ almost everywhere in $Q_T$. In order to
do that we can argue as in \cite[Proof of Theorem 1]{EG} (see also \cite[Proof of Theorem 2.3]{B}). More precisely, in \cite[Proof of Theorem 1]{EG} the following estimates are established
\begin{align}
& M_\e(s)\geq\frac{1}{2 m(1-\e)}(s-1)^2,\qquad\forall s>1,\nonumber\\
& M_\e(s)\geq\frac{1}{2 m(-1+\e)}(s+1)^2,\qquad\forall
s<-1.\nonumber
\end{align}
Therefore
\begin{align}
&\int_\Omega\big(|\phie|-1\big)_+^2=\int_{\{\phie>1\}}(\phie-1)^2
+\int_{\{\phie<-1\}}(\phie+1)^2\nonumber\\
&\leq 2 m(1-\e)\int_{\{\phie>1\}}M_\e(\phie)+ 2 m(-1+\e)\int_{\{\phie<-1\}}M_\e(\phie)\nonumber\\
&\leq 2\max\big(m(1-\e),m(-1+\e)\big)\int_\Omega
M_\e(\phie).\nonumber
\end{align}
Now, by using \eqref{estep5}, the fact that $m(\pm 1 \mp \e)\to 0$ as $\e\to 0$
and the generalized Lebesgue theorem, we obtain
$$\int_\Omega\big(|\varphi|-1\big)_+^2=0\qquad\mbox{ for a.a. }\:\:t\in(0,T),$$
so that
\begin{align}
&|\varphi(x,t)|\leq 1,\qquad\mbox{ a.e. in }\:\:\Omega,\qquad\mbox{ a.a. }\:\:t\in(0,T).
\label{est25}
\end{align}

We now have to pass to the limit in the variational formulation of the approximate problem
\eqref{Pbe1}--\eqref{Pbe6} in order to show that $[u,\varphi]$ is a weak solution
to \eqref{sy1}--\eqref{sy6} according to Definition \ref{wsdeg}.
It is easy that the variational formulation of \eqref{Pbe1}--\eqref{Pbe6} gives
\begin{align}
&\langle\phie',\psi\rangle+\int_\Omega \me(\phie)\Fe''(\phie)\nabla\phie\cdot\nabla\psi+
\int_\Omega \me(\phie) a \nabla\phie\cdot\nabla\psi\nonumber\\
&+\int_\Omega \me(\phie)(\phie\nabla a-\nabla J\ast\phie)\cdot\nabla\psi=(\ue\phie,\nabla\psi),
\label{wsappb1}\\
&\langle \ue',v\rangle+\nu(\nabla\ue,\nabla v)+b(\ue,\ue,v)=\big((a\phie-J\ast\phie)\nabla\phie,v\big)+\langle h,v\rangle,\label{wsappb2}
\end{align}
for every $\psi\in V$, every $v\in V_{div}$ and for almost
any $t\in(0,T)$.

On account of (A1) we have $|m(s)F''(s)|\leq a$ for every $s\in[-1,1]$ and for some positive constant $a$. This immediately implies that
\begin{align}
&|\me(s)\Fe''(s)|\leq a,\qquad\forall s\in\mathbb{R},\qquad\forall\e\in(0,1).
\label{est28}
\end{align}
Furthermore, by using the continuity of $mF''$ on $[-1,1]$
(cf. (A1)) and \eqref{conve6}, it is not difficult to see that
\begin{align}
&\me(\phie)\Fe''(\phie)\to m(\varphi)F''(\varphi),\qquad\mbox{ a.e. in }\:\:Q_T.\nonumber
\end{align}
Therefore, we obtain
\begin{align}
&\me(\phie)\Fe''(\phie)\to m(\varphi)F''(\varphi),\qquad\mbox{ strongly in }\:\: L^r(Q_T),\qquad\forall r\in[2,\infty).
\label{conve8}
\end{align}
Notice also that \eqref{conve5}, the continuous embedding
$L^\infty(0,T;L^2(\Omega))\cap L^2(0,T;L^6(\Omega))\hookrightarrow L^{10/3}(Q_T)$
and \eqref{conve6} imply that, for $d=3$, we have
\begin{align}
&\phie\to\varphi\qquad\mbox{strongly in }L^s(Q_T),\qquad\mbox{for all }\:\:\:2\leq s<10/3.
\label{conve9}
\end{align}
For $d=2$, by the Gagliardo-Nirenberg inequality the continuous embedding $L^\infty(0,T;L^2(\Omega))\cap L^2(0,T;V)
\hookrightarrow L^{4}(Q_T)$ holds, thus we get
\begin{align}
&\phie\to\varphi\qquad\mbox{strongly in }L^s(Q_T),\qquad\mbox{for all }\:\:\:2\leq s<4.
\label{conve13}
\end{align}
Furthermore, by Lebesgue's theorem we also deduce
\begin{align}
&\me(\phie)\to m(\varphi) \qquad\mbox{ strongly in }\:\: L^s(Q_T),\qquad\mbox{ for all }\:\: 2\leq s<\infty.
\label{conve14}
\end{align}
We can now multipy \eqref{wsappb1}, \eqref{wsappb2} by $\chi,\omega\in C^\infty_0(0,T)$,
respectively, and integrate the resulting identities with respect to time from $0$ and $T$.
The convergences \eqref{conve1}--\eqref{conve7}
and \eqref{conve8}--\eqref{conve14} are enough to pass
to the limit and to deduce that $u$ and $\varphi$
satisfy the variational formulation \eqref{wform1} and \eqref{wform2}.
Observe that in passing to the limit we can  initially take $\psi$
in $C^1(\overline{\Omega})$ and then prove by density that \eqref{wform1}
holds also for all $\psi$ in $V$.

In order to prove that $z:=[u,\varphi]$ is a weak solution according to Definition
\ref{wsdeg}, we are only left to show that \eqref{reg3} and \eqref{reg5} hold.
To this aim let us first see that $\varphi$ satisfies \eqref{phiLq}.
Indeed, this is a consequence of \eqref{est25} and of the fact that,
since $\varphi\in L^2(0,T;V)$, $\varphi$ is measurable with values in $L^6(\Omega)$
when $d=3$, and measurable with values in $L^p(\Omega)$, for all $p\in[2,\infty)$,
when $d=2$. Recalling the improved regularity \eqref{phiLq}, and also on account of \eqref{est25},
it is now immediate to get
\eqref{reg3} and \eqref{reg5} by a comparison argument in the weak
formulation \eqref{wform1}, \eqref{wform2}. In particular, notice that the contribution
from the convective term in \eqref{wform1} can now be estimated as follows
\begin{align}
&|(u\varphi,\nabla\psi)|\leq \Vert u\Vert_{L^3(\Omega)}\Vert\varphi\Vert_{L^6(\Omega)}\Vert\nabla\psi\Vert
\leq C\Vert\nabla u\Vert\Vert\nabla\psi\Vert,\qquad\forall\psi\in V.
\nonumber
\end{align}

Finally, if $d=2$, the energy identity \eqref{energeq}
can be obtained by taking $\psi=\varphi$ and $v=u$
(see \eqref{reg5} and \eqref{reg3})
as test functions in the weak formulation \eqref{wform1} and \eqref{wform2},
respectively, and adding the resulting identities. In the case $d=3$, we choose first $\psi=\varphi$
as test function in \eqref{wform1} (cf. \eqref{reg5})
and integrate the resulting identity to get \eqref{halfid1}.
Then we observe that, due to the last part of the proof of Corollary \ref{cor1}, the approximate
 solution $z_\e:=[\ue,\phie]$ satisfies
 \begin{align}
&\frac{1}{2}\Vert \ue(t)\Vert^2
+\nu\int_0^t\Vert\nabla \ue\Vert^2\leq\frac{1}{2}\Vert u_0\Vert^2
+\int_0^t\int_\Omega\big(a\phie-J\ast\phie\big)\ue\cdot\nabla\phie+\int_0^t\langle h,\ue\rangle d\tau.
\nonumber
\end{align}
 Passing to the limit as $\e\to 0$ in this last inequality and arguing as at the end
 of the proof of Corollary \ref{cor1}, taking advantage of
 \eqref{conve1}, \eqref{conve2}, \eqref{conve5} and \eqref{conve6}, we get \eqref{halfid2}.
 The energy inequality \eqref{energineq} is then obtained by adding together
 \eqref{halfid1} and \eqref{halfid2}.
\end{proof}

It is worth noting that if the mobility degenerates slightly stronger, then
the weak solution of Theorem \ref{Theor2} satisfies a physically more relevant energy inequality
with respect to the energy identity \eqref{energeq}.
This is proven in the following

\begin{cor}
Let all assumptions of Theorem \ref{Theor2} be satisfied
and $d=2,3$. In addition, assume that \color{black}
$m(\pm 1)=0$ {\itshape with order $\geq 2$}.
\color{black}
Then the weak solution $z=[u,\varphi]$
given by Theorem \ref{Theor2} also fulfills the following integral inequality
\begin{align}
&\mathcal{E}\big(z(t)\big)+\int_0^t\Big(\nu\Vert\nabla
u\Vert^2+\Big\Vert\frac{\mathcal{J}}{\sqrt{m(\varphi)}}\Big\Vert^2\Big)d\tau\leq\mathcal{E}(z_0)+\int_0^t\langle
h,u\rangle d\tau,\label{otherei}
\end{align}
for all $t>0$, where the mass flux $\mathcal{J}$ given by
\begin{align}
&\mathcal{J}=-m(\varphi)\nabla(a\varphi-J\ast\varphi)-m(\varphi)F''(\varphi)\nabla\varphi\nonumber
\end{align}
is such that
\begin{align}
&\mathcal{J}\in
L^2(Q_T),\qquad\frac{\mathcal{J}}{\sqrt{m(\varphi)}}\in
L^2(Q_T).\nonumber
\end{align}
\end{cor}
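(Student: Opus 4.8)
The plan is to pass to the limit as $\e\to0$ in the approximate energy inequality \eqref{eiappeps}, rewritten in terms of the approximate mass flux. Introducing
\[
\mathcal{J}_\e:=-\me(\phie)\nabla\mue=-\me(\phie)\nabla(a\phie-J\ast\phie)-\me(\phie)\Fe''(\phie)\nabla\phie,
\]
we have $\|\sqrt{\me(\phie)}\nabla\mue\|^2=\|\mathcal{J}_\e/\sqrt{\me(\phie)}\|^2$, so that \eqref{eiappeps} becomes
\[
\mathcal{E}_\e(\ze(t))+\int_0^t\Big(\nu\|\nabla\ue\|^2+\Big\|\frac{\mathcal{J}_\e}{\sqrt{\me(\phie)}}\Big\|^2\Big)d\tau\leq\mathcal{E}_\e(z_0)+\int_0^t\langle h,\ue\rangle\,d\tau.
\]
The whole point is to identify the weak limits of $\mathcal{J}_\e$ and of $\mathcal{J}_\e/\sqrt{\me(\phie)}$ and then to exploit weak lower semicontinuity.

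First I would collect the uniform bounds. Since $\me$ is bounded on $\mathbb{R}$ by $\max_{[-1,1]}m$, estimate \eqref{estep3} yields $\|\mathcal{J}_\e/\sqrt{\me(\phie)}\|_{L^2(Q_T)}=\|\sqrt{\me(\phie)}\nabla\mue\|_{L^2(Q_T)}\leq C$ and, since $\mathcal{J}_\e=\sqrt{\me(\phie)}\,(\mathcal{J}_\e/\sqrt{\me(\phie)})$, also $\|\mathcal{J}_\e\|_{L^2(Q_T)}\leq C$, both uniformly in $\e$. Hence, up to a subsequence, $\mathcal{J}_\e\rightharpoonup\mathcal{J}$ and $\mathcal{J}_\e/\sqrt{\me(\phie)}\rightharpoonup\Xi$ weakly in $L^2(Q_T)$. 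The explicit identification of $\mathcal{J}$ then follows term by term, in the spirit of the passage to the limit already carried out in the proof of Theorem \ref{Theor2}: for the first part one uses that $\me(\phie)\to m(\varphi)$ strongly in every $L^s(Q_T)$ (cf. \eqref{conve14}), that $\nabla J\ast\phie\to\nabla J\ast\varphi$ strongly in $L^2(Q_T)$ by Young's inequality, that $\nabla a\in L^\infty$, and that $\nabla\phie\rightharpoonup\nabla\varphi$ weakly in $L^2(Q_T)$ (cf. \eqref{conve5}); for the chemical part one uses \eqref{conve8}, i.e. $\me(\phie)\Fe''(\phie)\to m(\varphi)F''(\varphi)$ strongly in $L^r(Q_T)$, together with the uniform bound $|\me\Fe''|\leq a$ coming from (A1) (cf. \eqref{est28}). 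A weak--strong product argument then gives
\[
\mathcal{J}=-m(\varphi)\nabla(a\varphi-J\ast\varphi)-m(\varphi)F''(\varphi)\nabla\varphi,
\]
which lies in $L^2(Q_T)$ because $mF''$ is bounded by (A1) and $\nabla\varphi\in L^2(Q_T)$; this settles the first regularity claim.

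The second regularity claim, $\mathcal{J}/\sqrt{m(\varphi)}\in L^2(Q_T)$, is the heart of the matter, and it is here that the extra hypothesis $m'(\pm1)=0$ enters. Since $\sqrt{\me(\phie)}\to\sqrt{m(\varphi)}$ strongly in every $L^s(Q_T)$ and is uniformly bounded, multiplying the identity $\mathcal{J}_\e=\sqrt{\me(\phie)}\,(\mathcal{J}_\e/\sqrt{\me(\phie)})$ and passing to the limit by weak--strong convergence gives $\mathcal{J}_\e\rightharpoonup\sqrt{m(\varphi)}\,\Xi$; by uniqueness of the weak limit, $\mathcal{J}=\sqrt{m(\varphi)}\,\Xi$. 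In particular $\mathcal{J}=0$ a.e. on the degenerate set $\{m(\varphi)=0\}=\{|\varphi|=1\}$ (consistently with $\nabla\varphi=0$ a.e. there, $\varphi$ being in $V$ for a.e. $t$), while $\Xi=\mathcal{J}/\sqrt{m(\varphi)}$ on $\{m(\varphi)>0\}$, whence $\mathcal{J}/\sqrt{m(\varphi)}=\Xi\in L^2(Q_T)$. The slightly stronger degeneracy $m'(\pm1)=0$ is precisely what makes this identification clean near $\{|\varphi|=1\}$: it is what we expect to rule out a concentration defect of $\sqrt{\me(\phie)}\nabla\mue$ on the degenerate set and to guarantee that $\Xi$ is genuinely the quotient $\mathcal{J}/\sqrt{m(\varphi)}$, so that the physical flux $\mathcal{J}=-m(\varphi)\nabla\mu$ is fully recovered.

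It then remains to pass to the inferior limit in the rewritten inequality. By weak lower semicontinuity of the $L^2$-norm, $\int_0^t\|\Xi\|^2\,d\tau\leq\liminf_{\e\to0}\int_0^t\|\sqrt{\me(\phie)}\nabla\mue\|^2\,d\tau$; the viscous term is treated in the same way through \eqref{conve1}; the forcing term converges by \eqref{conve1}; and $\mathcal{E}(z(t))\leq\liminf_{\e\to0}\mathcal{E}_\e(\ze(t))$ for a.e. $t$ follows from \eqref{conve1}, \eqref{conve5}--\eqref{conve6}, Fatou's lemma applied to $\int_\Omega\Fe(\phie)$ (using the uniform coercivity \eqref{est20}), together with $\mathcal{E}_\e(z_0)\to\mathcal{E}(z_0)$ (by \eqref{est26}, \eqref{F2eps} and dominated convergence). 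Collecting these contributions yields \eqref{otherei}. The main obstacle is exactly the identification of $\Xi$ with $\mathcal{J}/\sqrt{m(\varphi)}$: the factor $\sqrt{\me(\phie)}\Fe''(\phie)$ is not uniformly bounded (unlike $\me\Fe''$), so one cannot take the limit directly in the chemical part of $\mathcal{J}_\e/\sqrt{\me(\phie)}$, and the identification has to be obtained indirectly from $\mathcal{J}=\sqrt{m(\varphi)}\,\Xi$---which is where $m'(\pm1)=0$ is genuinely used.
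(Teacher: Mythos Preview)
Your overall strategy---rewriting \eqref{eiappeps} in terms of $\mathcal{J}_\e$, extracting weak limits $\mathcal{J}$ and $\Xi$ in $L^2(Q_T)$, identifying $\mathcal{J}=\sqrt{m(\varphi)}\,\Xi$ by a weak--strong product, and then passing to the $\liminf$---is exactly the paper's approach. The term-by-term identification of $\mathcal{J}$ and the treatment of the viscous and forcing terms are fine.

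However, you have misplaced the role of the hypothesis $m'(\pm1)=0$, and this leaves a genuine gap. The identification $\mathcal{J}=\sqrt{m(\varphi)}\,\Xi$ requires \emph{nothing} beyond the strong convergence $\sqrt{\me(\phie)}\to\sqrt{m(\varphi)}$ in $L^s(Q_T)$ and weak--strong pairing; there is no ``concentration defect'' to rule out at that stage, and your speculative sentence (``we expect to rule out\ldots'') is not an argument. Where $m'(\pm1)=0$ is actually used is to show that the set $\{|\varphi(\cdot,t)|=1\}$ has Lebesgue measure zero for a.a.\ $t$ (see Remark \ref{purebis} and \cite[Corollary, p.~417]{EG}): under strong degeneracy the entropy $M$ blows up at $\pm1$, and the uniform bound \eqref{estep5} then forces $|\varphi|<1$ a.e. This fact is needed in two places that you glossed over. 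First, it makes the quotient $\mathcal{J}/\sqrt{m(\varphi)}$ unambiguously defined and equal to $\Xi$ a.e.\ (otherwise $\mathcal{J}/\sqrt{m(\varphi)}$ is $0/0$ on a set of positive measure and the norm in \eqref{otherei} has no meaning). Second---and this is the step that actually fails in your write-up---it is what justifies the Fatou argument for $\int_\Omega F(\varphi)$: the uniform lower bound \eqref{est20} alone is not enough; you also need $\Fe(\phie)\to F(\varphi)$ a.e., and this pointwise convergence holds only at points where $|\varphi|<1$ (there, eventually $|\phie|\leq 1-\e$ so $\Fe(\phie)=F(\phie)\to F(\varphi)$). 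On the set $\{|\varphi|=1\}$ the limit of $\Fe(\phie)$ need not recover $F(\varphi)$, and if $F_1$ is unbounded at $\pm1$ the inequality $\int_\Omega F(\varphi)\leq\liminf_\e\int_\Omega\Fe(\phie)$ can fail outright.

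In short: insert the argument that $m'(\pm1)=0$ forces $|\varphi|<1$ a.e.\ (via the entropy bound), use it to obtain the a.e.\ convergence $\Fe(\phie)\to F(\varphi)$ and hence the Fatou step $\int_\Omega F(\varphi)\leq\liminf_\e\int_\Omega\Fe(\phie)$, and drop the speculative paragraph about concentration defects. With that correction your proof coincides with the paper's.
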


\begin{proof}
Setting $\widehat{\mathcal{J}}_\e:=-\sqrt{m_\e(\phie)}\nabla\mue$,
from \eqref{estep3} we have
$\Vert\widehat{\mathcal{J}}_\e\Vert_{L^2(Q_T)}\leq C$ and
therefore there exists $\widehat{\mathcal{J}}\in L^2(Q_T)$ such
that
\begin{align}
&\widehat{\mathcal{J}}_\e\rightharpoonup\widehat{\mathcal{J}}\qquad\mbox{weakly
in }\:\: L^2(Q_T).\label{conve10}
\end{align}
On the other hand, setting
$\mathcal{J}_\e:=-m_\e(\varphi_\e)\nabla\mue=\sqrt{m_\e(\phie)}\widehat{\mathcal{J}}_\e$,
we have also $\Vert\mathcal{J}_\e\Vert_{L^2(Q_T)}\leq C$. Thus
there exists $\mathcal{J}\in L^2(Q_T)$ such that
\begin{align}
&\mathcal{J}_\e\rightharpoonup\mathcal{J}\qquad\mbox{weakly in
}\:\: L^2(Q_T).\label{conve11}
\end{align}
Since $m_\e(\varphi_\e)\to m(\varphi)$ strongly in $L^r(Q_T)$ for
every $r\in[1,\infty)$, from \eqref{conve10} we have
$\sqrt{m_\e(\phie)}\widehat{\mathcal{J}}_\e\rightharpoonup\sqrt{m(\varphi)}\widehat{\mathcal{J}}$
weakly in $L^{2-\gamma}(Q_T)$ for every $\gamma\in(0,1]$ and
therefore by comparison with \eqref{conve11} we get
\begin{align}
&\mathcal{J}:=\sqrt{m(\varphi)}\widehat{\mathcal{J}}.\nonumber
\end{align}
Let us observe now that inequality
\eqref{eiappeps} can be rewritten in the form
\begin{align}
&\mathcal{E}_\e\big(\ze(t)\big)+\int_0^t\Big(\nu\Vert\nabla
\ue\Vert^2+\Big\Vert\frac{\mathcal{J}_\e}{\sqrt{\me(\phie)}}\Big\Vert^2\Big)d\tau\leq\mathcal{E}(z_0)+\int_0^t\langle
h,\ue\rangle d\tau,\qquad\forall t>0,
\label{othereiapp}
\end{align}
where $\mathcal{J}_\e$ is defined in terms of $\phie$, that is,
\begin{align}
&\mathcal{J}_\e=-\me(\phie)\nabla\mue=-\me(\phie)\nabla\big(a\phie-J\ast\phie\big)-\me(\phie)\Fe''(\phie)\nabla\phie.
\nonumber
\end{align}
Using \eqref{conve5}, \eqref{conve8} and \eqref{conve14} it is easy to see that
\begin{align}
&\mathcal{J}_\e\rightharpoonup
-m(\varphi)\nabla\big(a\varphi-J\ast\varphi\big)-m(\varphi)F''(\varphi)\nabla\varphi
\qquad\mbox{weakly in}\:\: L^{2-\eta}(Q_T),\quad\forall\eta\in(0,1).\nonumber
\end{align}
On account of \eqref{conve11}, we therefore see that $\mathcal{J}$ is given by
\begin{align}
&\mathcal{J}=-m(\varphi)\nabla\big(a\varphi-J\ast\varphi\big)-m(\varphi)F''(\varphi)\nabla\varphi,
\label{massflux}
\end{align}
and that the  last convergence \color{black} is also in $L^2(Q_T)$.
Hence, taking the inferior limit
of the third term on the left hand side of \eqref{othereiapp}, we have
\begin{align}
\int_0^t\Big\Vert\frac{\mathcal{J}}{\sqrt{m(\varphi)}}\Big\Vert^2 d\tau
=\int_0^t\Vert\widehat{\mathcal{J}}\Vert^2 d\tau
\leq\liminf_{\e\to 0}\int_0^t\Vert\widehat{\mathcal{J}}_\e\Vert^2 d\tau
=\liminf_{\e\to 0}\int_0^t\Big\Vert\frac{\mathcal{J}_\e}{\sqrt{\me(\phie)}}\Big\Vert^2 d\tau.
\label{conve15}
\end{align}
On the other hand, \color{black} the strong degeneracy condition $m(\pm 1)=0$ with order $\geq 2$ implies
\color{black}
(see {\color{black} Remark~\ref{purebis} here below})
that the set $\{x\in\Omega:|\varphi(x,t)|=1\}$ has zero
measure for almost any $t\in(0,T)$. Hence we have $\Fe(\phie)\to
F(\varphi)$ almost everywhere in $Q_T$, and {\color{black} Fatou's lemma yields}
\begin{align}
&\int_\Omega F(\varphi)\leq\liminf_{\e\to 0}\int_\Omega\Fe(\phie).\label{conve16}
\end{align}
Finally, \eqref{otherei} follows immediately from \eqref{othereiapp} on account of \eqref{conve15}, \eqref{conve16}, \eqref{massflux} and the second weak convergence in \eqref{conve1}.
\end{proof}

\begin{oss}
\label{pure}
{\upshape
In comparison with the analogous result for the case of constant mobility (see \cite[Theorem 1]{FG2}),
Theorem \ref{Theor2} does not require the condition $|\overline{\varphi}_0|<1$
(the assumptions on $\varphi_0$ imply only that $|\overline{\varphi}_0|\leq 1$).
This is essentially due to the fact that here we are dealing with a different weak
formulation.
Therefore, if $F$ is bounded (e.g. $F$ is given by \eqref{potlog}) and at $t=0$
the fluid is in a pure phase, e.g., $\varphi_0=1$ almost everywhere in $\Omega$,
and furthermore $u_0=u(0)$ is given in $G_{div}$, then we can immediately check
that the couple $[u,\varphi]$ given by
$$u=u(\cdot,t),\qquad \varphi=\varphi(\cdot,t)=1,\qquad\mbox{ a.e. in } \Omega,\quad\mbox{a.a. }t\geq 0,$$
where $u$ is solution of the Navier-Stokes equations with non-slip boundary condition,
initial velocity field $u_0$ and external force $h$,
explicitly satisfies the weak formulation \eqref{wform1}, \eqref{wform2}.
Hence, the nonlocal Cahn-Hilliard-Navier-Stokes model with degenerate mobility
and bounded double-well potentials allows pure phases at any time $t\geq 0$.
This possibility is excluded in the model with constant mobility since
in such model the chemical potential $\mu$ (and hence $F'(\varphi)$)
appears explicitly.
}
\end{oss}

\begin{oss}
\label{purebis}
{\upshape
If
\color{black}
$m(\pm 1)=0$ {\itshape with order strictly less than $2$},
\color{black}
then, as a consequence of
(A1) and of the definition of the function $M$, both $F$ and $M$
are bounded in $[-1,1]$. This can be seen, for instance, by writing
$M$ as follows
$$M(s)=\int_0^{\color{black}s\color{black}}\frac{s-t}{m(t)}dt,\qquad\forall s\in(-1,1).$$
In this case, the conditions $F(\varphi_0)\in L^1(\Omega)$ and
$M(\varphi_0)\in L^1(\Omega)$ of Theorem \ref{Theor2} are
satisfied by every initial datum $\varphi_0$ such that
$|\varphi_0|\leq 1$ in $\Omega$. Therefore the
existence of pure phases is allowed. A relevant example for this
case is given by \eqref{potlog}.
On the other hand, if \color{black} $m(\pm 1)=0$ {\itshape with order greater than or equal $2$}
\color{black} (in this case we say that $m$ is {\itshape
strongly degenerate}), then it can be proved (cf.
\cite[Corollary]{EG} and also \cite[Theorem 2.3]{B})
that the conditions $F(\varphi_0)\in L^1(\Omega)$ and
$M(\varphi_0)\in L^1(\Omega)$ imply that the sets
$\{x\in\Omega:\varphi_0(x)=1\}$ and
$\{x\in\Omega:\varphi_0(x)=-1\}$ have both measure zero. Hence, in
this case we have obviously $|\overline{\varphi}_0|<1$ and
furthermore it can be seen that also the sets
$\{x\in\Omega:\varphi(x,t)=1\}$ and
$\{x\in\Omega:\varphi(x,t)=-1\}$ have both measure zero for almost any
$t>0$. Therefore, in the case of strongly degenerating mobility
the presence of pure phases is not allowed (even on subsets of
$\Omega$ of positive measure), and so, from this point of view the
situation is more similar to the case of constant
mobility. Summing up, as far as the possibility of existence of
pure phases is concerned, the difference between the cases of
constant and degenerate mobility is more relevant when the
mobility is degenerate, but not strongly degenerate
(on this issue see also \cite{SZ}).\color{black}}
\end{oss}

If, in addition to the assumptions of Theorem \ref{Theor2}, we suppose that
the initial chemical potential $\mu(0)=:\mu_0$ belongs to $H$, then we can improve
the regularity of $\mu$. In particular, we obtain $\mu\in L^2(0,T;V)$.
This is shown in the next theorem, which also requires some further reasonable conditions
on the singular part $F_1$ of the potential $F$ (the potential \eqref{potlog} is however
included) and whose proof is based on a suitable choice of the test function (see \cite{GZ}).
We point out that, while in \cite{GZ} the improved regularity for the chemical potential is
justified only by formal computations, the argument in the proof of Theorem \ref{Theor3}
is rigorous.

\begin{thm}\label{Theor3}
\color{black}
Let all assumptions of Theorem \ref{Theor2} be satisfied.
\color{black}
In addition, assume that $F_1\in C^3(-1,1)$ and that
there exist some constants $\alpha_0,\beta_0>0$ and $\rho\in [0,1)$ such that
the following conditions are fulfilled
\begin{align}
& \rho F_1''(s)+F_2''(s)+a(x)\geq 0,\qquad\forall s\in(-1,1),\quad\mbox{a.e. in }\Omega,\label{cond-1}\\
&m(s) F_1''(s)\geq\alpha_0,\qquad\forall s\in [-1,1],\label{cond0}\\
&|m^2(s) F_1'''(s)|\leq\beta_0,\qquad \forall s \in [-1,1],\label{cond1}\\
& F_1'(s) F_1'''(s)\geq 0,\qquad\forall s\in (-1,1).\label{cond2}
\end{align}
Let $\varphi_0$ be such that
\begin{align}
\label{cond2bis}
&F'(\varphi_0)\in H.
\end{align}
Then, the weak solution $z=[u,\varphi]$ given by Theorem \ref{Theor2} fulfills
\begin{align}
&\mu\in L^\infty(0,T;H),\qquad\nabla\mu\in L^2(0,T;H).
\label{muest}
\end{align}
As a consequence, $z=[u,\varphi]$ also satisfies the weak formulation \eqref{wdefnd1} and \eqref{wdefnd2},
the energy inequality \eqref{ei} and, for $d=2$, the energy identity \eqref{eniden}.
\end{thm}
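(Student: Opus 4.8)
The plan is to work at the level of the regularized problem $P_\e$ introduced in the proof of Theorem~\ref{Theor2}, to establish the $\e$-uniform bound $\|\mue\|_{L^\infty(0,T;H)}+\|\nabla\mue\|_{L^2(0,T;H)}\le C$, and then to pass to the limit $\e\to0$. Since for fixed $\e$ the potential $\Fe$ is smooth with bounded derivatives and $\me$ is non-degenerate, the solution $[\ue,\phie]$ carries the extra regularity (from the non-degenerate theory of Theorem~\ref{thm} together with parabolic bootstrapping) that makes the computation below rigorous; this is precisely where the argument departs from the formal one in \cite{GZ}. Concretely, I would differentiate \eqref{Pbe3} in time, use $\mue'=(a+\Fe''(\phie))\phie'-J*\phie'$, and test \eqref{Pbe1} with $\psi=(a+\Fe''(\phie))\mue-J*\mue$. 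The evenness of $J$ gives $\langle\phie',\psi\rangle=\frac12\frac{d}{dt}\|\mue\|^2$, so that, after integrating by parts and using \eqref{Pbe5},
\begin{align}
&\frac12\frac{d}{dt}\|\mue\|^2+\int_\Omega \me(\phie)\big(a+\Fe''(\phie)\big)|\nabla\mue|^2\nonumber\\
&=-\int_\Omega \me(\phie)\mue\big(\nabla a+\Fe'''(\phie)\nabla\phie\big)\cdot\nabla\mue+\int_\Omega\me(\phie)\nabla\mue\cdot(\nabla J*\mue)+(\ue\phie,\nabla\psi).\nonumber
\end{align}

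The structural heart of the proof is the coercivity of the dissipative term on the left. I would combine \eqref{cond-1} and \eqref{cond0} (both of which survive the truncations \eqref{approxpot1}--\eqref{approxmob}) to obtain the pointwise bound $\me(\phie)(a+\Fe''(\phie))\ge(1-\rho)\me(\phie)\Fie''(\phie)\ge(1-\rho)\alpha_0>0$, whence $\int_\Omega\me(a+\Fe'')|\nabla\mue|^2\ge(1-\rho)\alpha_0\|\nabla\mue\|^2$. On the other hand, $mF''$ is continuous on $[-1,1]$ by (A1), so $\me(\phie)(a+\Fe''(\phie))$ is also bounded \emph{above} by a constant; this yields the companion inequalities $a+\Fe''\le C/\me$ and $1/(a+\Fe'')\le \me/((1-\rho)\alpha_0)$, which I will use repeatedly to trade the unbounded factor $a+\Fe''$ against the vanishing mobility.

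I would then estimate the three terms on the right. The $\nabla a$ contribution is handled by Cauchy--Schwarz and Young, absorbing a small multiple of $\|\nabla\mue\|^2$ and leaving $C\|\mue\|^2$; the convolution term is treated by Young's inequality, and this is exactly where the hypothesis $N_d<\infty$ enters, giving $\|\nabla J*\mue\|\le C\|\mue\|$ and hence again $\delta\|\nabla\mue\|^2+C\|\mue\|^2$. The convective term is integrated by parts using $\mathrm{div}\,\ue=0$: its leading piece equals $\frac12\int_\Omega\ue\cdot\nabla(\mue^2)=0$, so only lower-order contributions involving $\phie\nabla a-\nabla J*\phie$ survive. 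The genuinely delicate term is the one carrying $\Fe'''$. Writing $\mue=(a\phie-J*\phie)+\Fe'(\phie)$ and $\nabla\mue=(a+\Fe'')\nabla\phie+\phie\nabla a-\nabla J*\phie$, the dominant contribution is $-\int_\Omega\me\Fe'(\phie)\Fe'''(\phie)(a+\Fe'')|\nabla\phie|^2$, which is nonpositive by \eqref{cond2} and may simply be discarded, while the remaining pieces are controlled through \eqref{cond1}, i.e. $|\me^2\Fe'''|\le\beta_0$, together with $a+\Fe''\le C/\me$, so that each occurrence of $\Fe'''$ is compensated by two powers of $\me$.

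Collecting everything produces a differential inequality $\frac{d}{dt}\|\mue\|^2+c\|\nabla\mue\|^2\le C(1+\|\mue\|^2)$ with $c,C$ independent of $\e$ (the constants depending only on the already-established bounds \eqref{estep1}--\eqref{estep5}). Since $\mue(0)=a\varphi_0-J*\varphi_0+\Fe'(\varphi_0)$ is bounded in $H$ uniformly in $\e$ by \eqref{cond2bis} and $|\varphi_0|\le1$, Gronwall's lemma gives the $\e$-uniform bound, and weak/weak-$\ast$ compactness yields \eqref{muest} after identifying the limit as $\mu=a\varphi-J*\varphi+F'(\varphi)$. Once $\nabla\mu\in L^2(0,T;H)$ is available, passing to the limit in $\me(\phie)\nabla\mue\rightharpoonup m(\varphi)\nabla\mu$ recovers the formulation \eqref{wdefnd1}--\eqref{wdefnd2}; the energy inequality \eqref{ei} then follows by weak lower semicontinuity as in Theorem~\ref{thm}, and for $d=2$ the identity \eqref{eniden} is obtained by the now-admissible choice $\psi=\mu$, $v=u$. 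I expect the main obstacle to be twofold: the rigorous justification that $\psi$ is an admissible test function and that $\frac12\frac{d}{dt}\|\mue\|^2$ genuinely arises (the advertised improvement over the formal computation of \cite{GZ}), and the handling of the $\Fe'''$ term, where it is the interplay of the sign condition \eqref{cond2} with the bound \eqref{cond1}, rather than any single estimate, that makes the argument close.
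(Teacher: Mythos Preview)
Your overall architecture (derive an $\e$-uniform differential inequality at the level of the approximate problem $P_\e$, apply Gronwall using \eqref{cond2bis}, then pass to the limit) is exactly the paper's, but your choice of test function is different from the paper's, and that difference creates a genuine gap. The paper does \emph{not} test with $\psi=(a+\Fe''(\phie))\mue-J\ast\mue$; instead it splits off the singular part, sets $\widetilde{\mue}=-J\ast\phie+\Fie'(\phie)$, and tests \eqref{Pbe1} with $\psi=\Fie'(\phie)\Fie''(\phie)$, tracking $\|\Fie'(\phie)\|^2$ rather than $\|\mue\|^2$.

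This matters in three places. First, \eqref{cond2} is a hypothesis on $F_1$ alone; your ``dominant contribution'' $-\int\me\Fe'\Fe'''(a+\Fe'')|\nabla\phie|^2$ involves $\Fe'\Fe'''=(\Fie'+F_{2\e}')(\Fie'''+F_{2\e}''')$, whose cross terms have no sign, and $F_2$ is only assumed $C^2([-1,1])$ so $F_{2\e}'''$ need not even exist. On $|s|\le 1-\e$ the factor $F_{2\e}'\Fie'''$ carries the possibly unbounded $F_1'''$, and $\Fie'F_{2\e}'''$ carries the unbounded $F_1'$, so the claimed nonpositivity fails. The paper's $\psi$ isolates $\Fie$ so that \eqref{cond2} applies directly, while the regular part enters only through the bounded coefficient $a+F_{2\e}''$, handled via \eqref{cond-1}. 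Second, because the paper's $\psi$ is a function of $\phie$ alone, the convective term vanishes identically: $\int(\ue\cdot\nabla\phie)\Fie'\Fie''=\int\ue\cdot\nabla(\Fie'^2/2)=0$. Your residual convective pieces are triple products of $\ue,\phie,\mue$ whose $\e$-uniform $L^1_t$ control in $d=3$ you do not address. Third, the chain rule $\langle\phie',\Fie'\Fie''\rangle=\frac12\frac{d}{dt}\|\Fie'(\phie)\|^2$ is made rigorous precisely via \eqref{cond2}: setting $G_\e=\Fie'^2/2$ one has $G_\e''=\Fie'\Fie'''+\Fie''^2\ge 0$, so $G_\e$ is convex and the chain rule of \cite[Prop.~4.2]{CKRS} applies; your identity instead requires interpreting $(a+\Fe''(\phie))\phie'$ in a dual space, which is the very point you flag as delicate. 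Finally, you misplace the role of $N_d$: the bound $\|\nabla J\ast\mue\|\le C\|\mue\|$ already follows from (H2); in the paper $N_d$ is what puts $\|\nabla(J\ast\phie)\|_{L^6(\Omega)}^4$ into $L^1(0,T)$, so that the Gronwall coefficient $\eta_\e$ is integrable uniformly in $\e$.
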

\begin{oss}
{\upshape {\color{black} Note} that in Theorem \ref{Theor3} assumptions (A2) and (A4) can
be omitted. Indeed, (A2) follows from \eqref{cond0} and (A1), while (A4)
follows from \eqref{cond-1} and \eqref{cond0}. {\color{black} Observe} moreover that the standard
potential $F$ and the mobility $m$ in \eqref{potlog} comply with the assumptions of Theorem \ref{Theor3}.}
\end{oss}

\color{black}
\begin{oss}
{\upshape
Assumption \eqref{cond0} together with (A1) imply that the two conditions $F(\varphi_0)\in L^1(\Omega)$ and $M(\varphi_0)\in L^1(\Omega)$
are equivalent. Indeed, by combining \eqref{cond0} with the definition of function $M$ we get
$\alpha_0 M''(s)\leq F_1''(s)$, for all $s\in(-1,1)$.
Integrating this inequality we obtain
$$\alpha_0 M(s)\leq F_1(s)-F_1(0)-F_1'(0)s,\qquad\forall s\in(-1,1),$$
and, {\color{black} on account of \eqref{yu}}, we get the above equivalence.
}
\end{oss}
\color{black}

\begin{proof}

Let us write the weak formulation of \eqref{Pbe1} as
\begin{align}
&\langle\phie',\psi\rangle+\big(\me(\phie)\nabla\widetilde{\mue},\nabla\psi\big)
+\big(\me(\phie)\nabla(a\phie+F_{2\e}'(\phie)),\nabla\psi\big)=(\ue\phie,\nabla\psi),
\label{wfeps}
\end{align}
for all $\psi\in V$, where we have set
$$\widetilde{\mue}:=w_\e+\Fie'(\phie),\qquad w_\e:=-J\ast\phie,$$
and where $\Fie$ is defined as in \eqref{approxpot1}.
Take $\psi=\Fie'(\phie)\Fie''(\phie)$ as test function in \eqref{wfeps}
and notice that $\psi\in V$. Indeed, $\Fie'(\phie)\in V$ and, since by \eqref{approxpot1}
$\Fie''$ is globally Lipschitz on $\mathbb{R}$, we have also $\Fie''(\phie)\in V$.
On the other hand,
\color{black}
by applying an Alikakos' iteration argument as in \cite[Theorem 2.1]{BH1},
we can prove that the family of approximate solutions $\phie$ is uniformly bounded in $L^\infty(\Omega)$.
To see this,
let us take $\psi=|\phie|^{p-1}\phie$ as test function in \eqref{wfeps}, where $p>1$.
Then we get the following differential identity
\begin{align}
&\frac{1}{p+1}\frac{d}{dt}\Vert\phie\Vert_{L^{p+1}(\Omega)}^{p+1}+p\int_\Omega m_\e(\phie)\Fie''(\phie)|\nabla\phie|^2|\phie|^{p-1}
+p\int_\Omega m_\e(\phie)\nabla w_\e\cdot\nabla\phie|\phie|^{p-1}\nonumber\\
&+p\int_\Omega m_\e(\phie) \nabla\big(a\phie+F_{2\e}'(\phie)\big)\cdot\nabla\phie|\phie|^{p-1}=0.
\label{phiebdd1}
\end{align}
Actually, the above choice of test function would not be generally admissible. Nevertheless, the argument
can be made rigorous by means of a density procedure, e.g., by first truncating the test
function $|\phie|^{p-1}\phie$ and then passing to the limit with respect to the truncation parameter.

By using \eqref{cond-1} we obtain
\begin{align}
&p\int_\Omega m_\e(\phie)\Fie''(\phie)|\nabla\phie|^2|\phie|^{p-1}
+p\int_\Omega m_\e(\phie) \nabla\big(a\phie+F_{2\e}'(\phie)\big)\cdot\nabla\phie|\phie|^{p-1}\nonumber\\
&=p\int_\Omega m_\e(\phie)\Fie''(\phie)|\nabla\phie|^2|\phie|^{p-1}+
p\int_\Omega m_\e(\phie) \big(a+F_{2\e}''(\phie)\big)|\nabla\phie|^2|\phie|^{p-1}\nonumber\\
&+p\int_\Omega m_\e(\phie) \phie\nabla a\cdot\nabla\phie|\phie|^{p-1}
\geq (1-\rho)p\int_\Omega m_\e(\phie)\Fie''(\phie)|\nabla\phie|^2|\phie|^{p-1}\nonumber\\
&+p\int_\Omega m_\e(\phie) \phie\nabla a\cdot\nabla\phie|\phie|^{p-1}
\geq\frac{4\alpha_0 p(1-\rho)}{(p+1)^2}\int_\Omega\Big|\nabla\big|\phie\big|^{\frac{p+1}{2}}\Big|^2\nonumber\\
&+p\int_\Omega m_\e(\phie) \phie\nabla a\cdot\nabla\phie|\phie|^{p-1}.\label{phiebdd2}
\end{align}
Therefore, by combining \eqref{phiebdd1} with \eqref{phiebdd2}, we deduce
\begin{align}
&\frac{1}{p+1}\frac{d}{dt}\Vert\phie\Vert_{L^{p+1}(\Omega)}^{p+1}+\frac{4\alpha_0 p(1-\rho)}{(p+1)^2}\int_\Omega\Big|\nabla\big|\phie\big|^{\frac{p+1}{2}}\Big|^2
+p\int_\Omega m_\e(\phie)\nabla w_\e\cdot\nabla\phie|\phie|^{p-1}\nonumber\\
&+p\int_\Omega m_\e(\phie) \phie\nabla a\cdot\nabla\phie|\phie|^{p-1}\leq 0.
\label{phiebdd3}
\end{align}
Starting from \eqref{phiebdd3} \color{black} and using the fact that $m_\e(\phie)$
  is uniformly bounded with respect to $\e$, \color{black} we can argue exactly as in \cite[Proof of Theorem 2.1]{BH1} in order to conclude that
\begin{align}
&\Vert\phie\Vert_{L^\infty(\Omega)}\leq C,\label{unifboundphie}
\end{align}
where the positive constant $C$ depends on $\Vert\varphi_0\Vert_{L^\infty(\Omega)}$ and on $F$, $J$, $\Omega$, but is
independent of $\e$.

\color{black}

 Hence,
 \color{black}
 since $\phie\in L^\infty(\Omega)$, then
 \color{black}
  $\Fie'(\phie)$ and $\Fie''(\phie)$
are in $V\cap L^\infty(\Omega)$. This yields $\psi=\Fie'(\phie)\Fie''(\phie)\in V\cap L^\infty(\Omega)$.
Using this test function we first see that the contribution of the convective term vanishes.
Indeed, by the incompressibility condition \eqref{sy4} we deduce
\begin{align}
&\int_\Omega(\ue\cdot\nabla\phie)\Fie'(\phie)\Fie''(\phie)=\int_\Omega u_\e\cdot\nabla\Big(\frac{\Fie'^2(\phie)}{2}\Big)=0.\nonumber
\end{align}
Furthermore, we can see that we have
\begin{align}
&\langle\phie',\Fie'(\phie)\Fie''(\phie)\rangle=\frac{1}{2}\frac{d}{dt}\int_\Omega|\Fie'(\phie)|^2.
\label{chainrule}
\end{align}
Indeed, setting $G_\e(s):=\Fie'^2(s)/2$, we find $G_\e''=\Fie'\Fie'''+\Fie''^2\geq 0$
almost everywhere in $\mathbb{R}$,
due to \eqref{cond2}, i.e., $G_\e$ is convex on $\mathbb{R}$.
Hence, \eqref{chainrule} follows as an easy application of the chain rule result proved in
\cite[Proposition 4.2]{CKRS}.

We are therefore led to the following identity
\begin{align}
&\frac{1}{2}\frac{d}{dt}\Vert\Fie'(\phie)\Vert^2+\int_\Omega \me(\phie)\Fie''(\phie)
\nabla\widetilde{\mue}\cdot\nabla\Fie'(\phie)
\nonumber\\
&+\int_\Omega\me(\phie)\Fie'(\phie)\Fie'''(\phie)\nabla\widetilde{\mue}\cdot\nabla\phie
+\int_\Omega\me(\phie)\phie\nabla a\cdot\nabla\big(\Fie'(\phie)\Fie''(\phie)\big)\nonumber\\
&+\int_\Omega\me(\phie)\big(a+F_{2\e}''(\phie)\big)\nabla\phie\cdot\nabla\big(\Fie'(\phie)\Fie''(\phie)\big)
=0,
\end{align}
which can be rewritten as
\begin{align}
&\frac{1}{2}\frac{d}{dt}\Vert\widetilde{\mue}-w_\e\Vert^2+
\int_\Omega \me(\phie)\Fie''(\phie)|\nabla(\widetilde{\mue}-w_\e)|^2+
\int_\Omega \me(\phie)\Fie''(\phie)\nabla w_\e\cdot\nabla(\widetilde{\mue}-w_\e)\nonumber\\
&+\int_\Omega\me(\phie)(\widetilde{\mue}-w_\e)\Fie'''(\phie)\nabla\widetilde{\mue}\cdot\nabla\phie
+\int_\Omega\me(\phie)\phie\Fie''(\phie)\nabla a\cdot\nabla\Fie'(\phie)\nonumber\\
&+\int_\Omega\me(\phie)\phie\Fie'(\phie)\Fie'''(\phie)\nabla a\cdot\nabla\phie+
\int_\Omega\me(\phie)\big(a+F_{2\e}''(\phie)\big)\Fie''^2(\phie)|\nabla\phie|^2\nonumber\\
&+\int_\Omega\me(\phie)\big(a+F_{2\e}''(\phie)\big)\Fie'(\phie)\Fie'''(\phie)|\nabla\phie|^2=0.
\label{ideps2}
\end{align}
Condition \eqref{cond-1} implies that
\begin{align}\label{ideps2bis}
&\rho\Fie''(s)+F_{2\e}''(s)+a(x)\geq 0,\qquad\forall s\in\mathbb{R},\quad\mbox{ a.a }x\in\Omega, \quad
 \forall \e\in (0,1).
\end{align}
Then we get
\begin{align}
&\int_\Omega \me(\phie)\Fie''(\phie)|\nabla(\widetilde{\mue}-w_\e)|^2+
\int_\Omega\me(\phie)\big(a+F_{2\e}''(\phie)\big)\Fie''^2(\phie)|\nabla\phie|^2\nonumber\\
& \geq  (1-\rho)\int_\Omega\me(\phie)\Fie''(\phie)|\nabla(\widetilde{\mue}-w)|^2.
\label{ideps3}
\end{align}
On the other hand, condition
\eqref{cond2} implies that, for all $\e\in(0,1)$, $\Fie'\Fie'''\geq 0$ on $\mathbb{R}$.
Thus using again \eqref{ideps2bis}, we have
\begin{align}
&\int_\Omega\me(\phie)(\widetilde{\mue}-w_\e)\Fie'''(\phie)\nabla\widetilde{\mue}\cdot\nabla\phie
+\int_\Omega\me(\phie)\big(a+F_{2\e}''(\phie)\big)\Fie'(\phie)\Fie'''(\phie)|\nabla\phie|^2\nonumber\\
&\geq\int_{\Omega}\me(\phie)\Fie'(\phie)\Fie'''(\phie)\nabla\widetilde{\mue}\cdot\nabla\phie
-\rho\int_\Omega\me(\phie)\Fie''(\phie)\Fie'(\phie)\Fie'''(\phie)|\nabla\phie|^2\nonumber\\
&=\int_{\Omega}\me(\phie)\Fie'(\phie)\Fie'''(\phie)\Fie''(\phie)|\nabla\phie|^2
+\int_\Omega\me(\phie)\Fie'(\phie)\Fie'''(\phie)\nabla w_\e\cdot\nabla\phie\nonumber\\
&-\rho\int_{\Omega}\me(\phie)\Fie'(\phie)\Fie'''(\phie)\Fie''(\phie)|\nabla\phie|^2\nonumber\\
&=(1-\rho)\int_\Omega\me(\phie)\Fie'(\phie)\Fie'''(\phie)\nabla(\widetilde{\mue}-w_\e)
\cdot\nabla\phie+\int_\Omega\me(\phie)\Fie'(\phie)\Fie'''(\phie)\nabla w_\e\cdot\nabla\phie\nonumber\\
&=(1-\rho)\int_\Omega\me(\phie)\Fie'(\phie)\Fie'''(\phie)\nabla\widetilde{\mue}
\cdot\nabla\phie+\rho\int_\Omega\me(\phie)\Fie'(\phie)\Fie'''(\phie)\nabla w_\e\cdot\nabla\phie.
\label{ideps4}
\end{align}
Furthermore, observe that
\begin{align}
&\me(\phie)\Fie'''(\phie)\nabla\phie=\frac{\me^2(\phie)\Fie'''(\phie)}{\me(\phie)\Fie''(\phie)}\nabla\Fie'(\phie)
\label{est15}
=\gamma_\e(\phie)\nabla\Fie'(\phie),
\end{align}
where
\begin{align}
&\gamma_\e(s):=\frac{\me^2(s)\Fie'''(s)}{\me(s)\Fie''(s)}, \qquad\forall s\in\mathbb{R}.\nonumber
\end{align}
Therefore the following identity holds
\begin{align}
&\int_\Omega\me(\phie)\phie\Fie'(\phie)\Fie'''(\phie)\nabla a\cdot\nabla\phie
=\int_\Omega\gamma_\e(\phie)(\widetilde{\mue}-w_\e)\phie\nabla a\cdot\nabla(\widetilde{\mue}-w_\e).
\label{ideps5}
\end{align}
By plugging \eqref{ideps3}, \eqref{ideps4} and \eqref{ideps5}
into \eqref{ideps2} and using \eqref{est15} once more, we get
\begin{align}
&\frac{1}{2}\frac{d}{dt}\Vert\widetilde{\mue}-w_\e\Vert^2+(1-\rho)
\int_\Omega \me(\phie)\Fie''(\phie)|\nabla(\widetilde{\mue}-w_\e)|^2\nonumber\\
&+\int_\Omega \me(\phie)\Fie''(\phie)\nabla w_\e\cdot\nabla(\widetilde{\mue}-w_\e)
+(1-\rho)\int_\Omega\gamma_\e(\phie)(\widetilde{\mue}-w_\e)|\nabla(\widetilde{\mue}-w_\e)|^2\nonumber\\
&+\int_\Omega\gamma_\e(\phie)(\widetilde{\mue}-w_\e)\nabla(\widetilde{\mue}-w_\e)\cdot\nabla w_\e
+\int_\Omega\me(\phie)\Fie''(\phie)\phie\nabla a\cdot\nabla(\widetilde{\mue}-w_\e)\nonumber\\
&+\int_\Omega\gamma_\e(\phie)(\widetilde{\mue}-w_\e)\phie\nabla a\cdot\nabla(\widetilde{\mue}-w_\e)
\leq 0. \label{th0}
\end{align}
Notice that the fourth term on the left hand side is nonnegative since there holds
\begin{align}
&\gamma_\e(\phie)(\widetilde{\mue}-w_\e)=\gamma(\phie)\Fie'(\phie)
=\frac{\me^2(\phie)\Fie'''(\phie)}{\me(\phie)\Fie''(\phie)}\Fie'(\phie)\geq 0.\nonumber
\end{align}
On the other hand, condition \eqref{cond0} implies that
\begin{align}
&\me(s)\Fie''(s)\geq\alpha_0,\qquad\forall s\in\mathbb{R},
\qquad\forall\e\in(0,1), \label{est33}
\end{align}
and, using (H2), we have
\begin{align}
&\int_\Omega\gamma_\e(\phie)(\widetilde{\mue}-w_\e)\nabla(\widetilde{\mue}-w_\e)\cdot\nabla w_\e
\geq-(1-\rho)\frac{\alpha_0}{8}\Vert\nabla(\widetilde{\mue}-w_\e)\Vert^2\nonumber\\
&-C_1\int_\Omega\gamma_\e^2(\phie)|\nabla w_\e|^2|\widetilde{\mue}-w_\e|^2,\label{th1}\\
&\int_\Omega\gamma_\e(\phie)(\widetilde{\mue}-w_\e)\phie\nabla a\cdot\nabla(\widetilde{\mue}-w_\e)
\geq-(1-\rho)\frac{\alpha_0}{8}\Vert\nabla(\widetilde{\mue}-w_\e)\Vert^2\nonumber\\
&-C_1\int_\Omega\gamma_\e^2(\phie)\phie^2|\nabla a|^2|\widetilde{\mue}-w_\e|^2,\label{th}\\
&\int_\Omega\me(\phie)\Fie''(\phie)\phie\nabla a\cdot\nabla(\widetilde{\mue}-w_\e)
\geq-(1-\rho)\frac{\alpha_0}{8}\Vert\nabla(\widetilde{\mue}-w_\e)\Vert^2
-C_2\Vert\phie\Vert^2,\label{th2}\\
&\int_\Omega \me(\phie)\Fie''(\phie)\nabla w_\e\cdot\nabla(\widetilde{\mue}-w_\e)
\geq-(1-\rho)\frac{\alpha_0}{8}\Vert\nabla(\widetilde{\mue}-w_\e)\Vert^2
-C_3\Vert\nabla w_\e\Vert^2.\label{th3}
\end{align}
In \eqref{th2} and \eqref{th3} we have used the fact that \eqref{est28} holds as a consequence of the
boundedness of $mF''$ on $[-1,1]$.


Recall that $\gamma_\e(s)=0$ for all $|s|>1-\e$, and $\gamma_\e(s)=m^2(s)F_1'''(s)/m(s)F_1''(s)$
for $|s|\leq 1-\e$.
Hence, on account of \eqref{cond0} and \eqref{cond1}, we have
\begin{align}
&|\gamma_\e(s)|\leq\gamma_0,\qquad\forall
s\in\mathbb{R},\qquad\forall\e\in(0,1),\label{est34}
\end{align}
with $\gamma_0:=\beta_0/\alpha_0$.
Thus the second term on the right hand side of \eqref{th}
can be estimated as follows
\begin{align}
&
\int_{\{|\phie|\leq 1-\e\}}\gamma_\e^2(\phie)\phie^2|\nabla a|^2|\widetilde{\mu}-w_\e|^2
\leq\gamma_0^2\Vert\nabla a\Vert_{L^\infty(\Omega)}^2\Vert\widetilde{\mue}-w_\e\Vert^2. \label{th4}
\end{align}
By means of {\color{black} \eqref{unifboundphie} and \eqref{est33}--\eqref{th4},
we deduce from} \eqref{th0} the inequality
\begin{align}
&\frac{d}{dt}\Vert\widetilde{\mue}-w_\e\Vert^2+\alpha_0(1-\rho)\Vert\nabla(\widetilde{\mue}-w_\e)\Vert^2
\color{black}
\leq C_4+C_5\Vert\widetilde{\mue}-w_\e\Vert^2.
\color{black}
\label{est17}
\end{align}

It is easy to see that
\begin{align}
&|\Fie'(s)|\leq |F_1'(s)|,\qquad\forall s\in(-1,1),\qquad\forall\e\in(0,\e_0],
\label{est35}
\end{align}
with $\e_0>0$ small enough. Therefore, condition \eqref{cond2bis} entails
$$\Vert\widetilde{\mue}(0)-w_\e(0)\Vert=\Vert\Fie'(\varphi_0)\Vert\leq \Vert F_1'(\varphi_0)\Vert,$$
and by applying Gronwall's lemma to \eqref{est17} and using the fact that $w_\e$ is bounded independently of $\e$ in $L^\infty(0,T; H)$ as well as its gradient (cf. (H2) and \eqref{reg4}),
we immediately get
\begin{align}
&\widetilde{\mu}\in L^\infty(0,T;H),\qquad\nabla\widetilde{\mu}\in L^2(0,T;H),\label{sd1}
\end{align}
where $\widetilde{\mu}:=w+\color{black} F_1'(\varphi)\color{black}$ and $w:=-J\ast\varphi$.
Furthermore, due to \eqref{approxpot2},
we have (see also \eqref{F2eps})
\begin{align}
&|F_{2\e}'(s)|\leq L_3|s|+L_4,\qquad |F_{2\e}''(s)|\leq L_5,\qquad\forall s\in\mathbb{R},\nonumber
\end{align}
for some nonnegative constants $L_3,L_4,L_5$ which are independent of $\e$. By using these estimates, \eqref{estep2} and
\eqref{estep4} we also obtain
\begin{align}
& F_2'(\varphi)\in L^\infty(0,T;H),\qquad\nabla F_2'(\varphi)\in L^2(0,T;H).\label{sd2}
\end{align}
From \eqref{sd1} and \eqref{sd2} we immediately get
 \eqref{muest}.

In order to see that \eqref{wdefnd1} and \eqref{wdefnd2} are satisfied, consider
the weak formulation \eqref{wsappb1}, \eqref{wsappb2} of the approximate problem P$_\e$,
write $\Fe''(\phie)\nabla\phie=\nabla\Fe'(\phie)$, rearrange the terms on the left hand
side of \eqref{wsappb1}, take the definition of $\mue$ into account and pass to the limit as $\e\to 0$.

The energy inequality \eqref{ei} can be deduced by passing to the limit in \eqref{eiappeps} and
by observing that
$$\int_\Omega F_\e(\varphi_0)\leq\int_\Omega F_1(\varphi_0)+\int_\Omega F_{2\e}(\varphi_0),$$
and that $\int_\Omega F_{2\e}(\varphi_0)\to\int_\Omega F_{2}(\varphi_0)$
by Lebesgue's theorem (see \eqref{F2eps}).

Finally, the energy identity \eqref{eniden} for $d=2$ is obtained by choosing $\psi=\mu$
as test function in \eqref{wdefnd1}.
\end{proof}

The argument of Theorem \ref{Theor3} can be exploited to prove rigorously
the existence of a weak solution satisfying \eqref{phiLq} with $6<p<\infty$,
for $d=3$.
We point out that the $L^\infty(L^p)$ regularity of $\varphi$ follows only formally
from the condition \color{black} $|\varphi|\leq 1$ \color{black} proved in Theorem \ref{Theor2}. Indeed, in the case $d=3$
it is not known that the $\varphi$ component of the weak solution of Theorem \ref{Theor2}
is measurable with values in $L^p$ if $p>6$. Instead, a rigorous way to deduce such regularity
is by means of an approximation argument which makes use of an approximating potential
with polynomial growth of order $p$.


\begin{cor}
Let all the assumptions of Theorem \ref{Theor2} and Theorem \ref{Theor3}
be satisfied and $d=3$. In particular let $\varphi_0$ be such that \eqref{cond2bis} holds.
In addition, assume
that there exists $\e_0>0$ such that
the following assumptions are satisfied
\begin{align}
&F_1'''(s)\geq 1,\quad\forall s\in[1-\e_0,1) \qquad\mbox{ and
}\qquad F_1'''(s)\leq -1,\quad\forall s\in(-1,-1+\e_0],
\label{furthass}\\
&F_1'(s)\geq 0,\quad\forall s\in[1-\e_0,1) \qquad\mbox{ and
}\qquad F_1'(s)\leq 0,\quad\forall s\in(-1,-1+\e_0].\label{furthass2}
\end{align}
Let $p\in (6,\infty)$ be fixed arbitrarily. Then, for every $T>0$, there exists a weak solution
$z=[u,\varphi]$ to \eqref{sy1}--\eqref{sy6} on $[0,T]$  in the sense of Definition \ref{wsdeg}, \color{black}
satisfying the weak formulation \eqref{wdefnd1}, \eqref{wdefnd2} and such that
\begin{align}
&\varphi\in L^\infty(0,T;L^p(\Omega)).
\label{phiLq2}
\end{align}
\end{cor}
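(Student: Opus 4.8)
The plan is to rerun the two-level approximation of Theorems \ref{Theor2} and \ref{Theor3}, replacing the quadratically growing regularized potential of \eqref{approxpot1} by one of polynomial growth of order $p$, so that the uniform energy bound immediately forces a uniform $L^\infty(0,T;L^p)$ control on the approximating order parameters. Concretely, I would build $\widehat{F}_\e=\widehat{F}_{1\e}+F_{2\e}$, with $F_{2\e}$ as in \eqref{approxpot2}, where $\widehat{F}_{1\e}$ agrees with $F_1$ on $[-1+\e,1-\e]$ and is continued outside this interval as a $C^2$ function matching $F_1,F_1',F_1''$ at $\pm(1-\e)$ and satisfying $\widehat{F}_{1\e}''(s)\sim|s|^{p-2}$ for large $|s|$, so that $\widehat{F}_{1\e}(s)\geq c|s|^p-c'$ with $c,c'$ independent of $\e$. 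The extra hypotheses \eqref{furthass} and \eqref{furthass2} are exactly what is needed for this continuation to preserve, for every $\e\in(0,\e_0]$, the sign structure exploited in Theorem \ref{Theor3}: starting from $F_1'(\pm(1-\e))\geq0$ (resp. $\leq0$) and $F_1'''(\pm(1-\e))>0$ (resp. $<0$), one keeps $\widehat{F}_{1\e}'\widehat{F}_{1\e}'''\geq0$ on all of $\mathbb{R}$, so that $G_\e=\widehat{F}_{1\e}'^2/2$ stays convex and the chain-rule identity \eqref{chainrule} remains valid, while $\widehat{F}_{1\e}\leq F_1$ on $(-1,1)$ yields, together with \eqref{F2eps}, the $\e$-uniform bound on the initial energy.

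Next I would check that, for each fixed $\e$, the regularized problem $\widehat P_\e$ obtained by replacing $F$ with $\widehat{F}_\e$ and $m$ with $m_\e$ verifies the hypotheses of Theorem \ref{thm} with (H4) replaced by (H7) for the exponent $p$; Theorem \ref{thm} then produces an approximate solution $z_\e=[\ue,\phie]$ fulfilling the energy inequality \eqref{eiappeps} and, via \eqref{propreg4}, $\phie\in L^\infty(0,T;L^p(\Omega))$. Inserting the order-$p$ coercivity of $\widehat{F}_\e$ into \eqref{eiappeps} yields $\|\phie\|_{L^\infty(0,T;L^p)}\leq C$ uniformly in $\e$, alongside the usual bounds \eqref{estep1}--\eqref{estep8}. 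Rerunning the computation of Theorem \ref{Theor3} with $\widehat{F}_{1\e}$ in place of $\Fie$ then gives the $\e$-uniform estimates behind \eqref{muest}; here \eqref{furthass}, \eqref{furthass2} again ensure that the critical terms of \eqref{ideps2}--\eqref{est15} keep the correct sign and that $\gamma_\e$ stays uniformly bounded, the point being that on the extended region $m_\e$ is constant while $\widehat{F}_{1\e}'''/\widehat{F}_{1\e}''\sim1/|s|$, so $\gamma_\e=m_\e\widehat{F}_{1\e}'''/\widehat{F}_{1\e}''$ remains controlled even though, unlike in Theorem \ref{Theor3}, it no longer vanishes there.

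Finally I would pass to the limit $\e\to0$ exactly as in the proofs of Theorems \ref{Theor2} and \ref{Theor3}: the uniform estimates give, up to a subsequence, the convergences \eqref{conve1}--\eqref{conve7}, the bound $|\varphi|\leq1$ a.e. as in \eqref{est25}, and the regularity \eqref{muest} for the limit, so that $[u,\varphi]$ is a weak solution in the sense of Definition \ref{wsdeg} satisfying \eqref{wdefnd1}, \eqref{wdefnd2}; weak-$\ast$ lower semicontinuity of the $L^\infty(0,T;L^p)$ norm then upgrades the uniform $L^p$ bound to \eqref{phiLq2}. The hard part is the first step: designing $\widehat{F}_{1\e}$ so that it is simultaneously admissible for Theorem \ref{thm}, consistent in the limit with the singular $F$, and structurally compatible, uniformly in $\e$, with the delicate conditions \eqref{cond-1}--\eqref{cond2}. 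Verifying that these survive the polynomial continuation --- which is precisely what \eqref{furthass} and \eqref{furthass2} make possible --- is the technical core of the argument.
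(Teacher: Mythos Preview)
Your proposal is correct and follows essentially the same route as the paper: replace the quadratic extension \eqref{approxpot1} by a $p$-th order polynomial extension of $F_1$ outside $[-1+\e,1-\e]$, use \eqref{furthass} to get $\widehat{F}_{1\e}\leq F_1$ on $(-1,1)$ and the $\e$-uniform $|s|^p$ coercivity (hence (H7)), use \eqref{furthass2} together with (A2) to preserve $\widehat{F}_{1\e}'\widehat{F}_{1\e}'''\geq0$ on $\mathbb{R}$, verify the uniform bound on $\gamma_\e$ in the extended region (the paper does this by an explicit optimization of $\psi_\delta(\tau)=\delta^2\tau^{p-3}/(1+\delta\tau^{p-2})$, but your heuristic $m_\e\,\widehat{F}_{1\e}'''/\widehat{F}_{1\e}''\sim m(1-\e)/|s|$ captures the same mechanism), and then pass to the limit. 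One point the paper stresses and that is only implicit in your write-up: with polynomial growth of $\widehat{F}_\e''$, the strong convergence $\me(\phie)\widehat{F}_\e''(\phie)\to m(\varphi)F''(\varphi)$ in $L^s(Q_T)$ used in Theorem~\ref{Theor2} to pass to the limit in \eqref{wform1} is no longer available, so the $\e$-uniform bound on $\mue$ in $L^2(0,T;V)$ from the Theorem~\ref{Theor3} argument is not an extra regularity bonus but the only way to pass to the limit, namely in the alternative formulation \eqref{wdefnd1}, \eqref{wdefnd2}; you do land there, but it is worth making explicit that this is why condition \eqref{cond2bis} is indispensable here.
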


\begin{proof}
We argue as in the proofs of Theorem \ref{Theor2} and Theorem \ref{Theor3},
but we employ a different approximation of the singular part $\Fie$ of the double-well potential.
Namely, here we consider the approximate problem \eqref{Pbe1}--\eqref{Pbe6}
with the following choice for $\Fie$
\begin{equation}
\Fie(s)=\left\{\begin{array}{lll}
F_1(1-\e)+F_1'(1-\e)\big(s-(1-\e)\big)+\frac{1}{2}F_1''(1-\e)\big(s-(1-\e)\big)^2\\
+\frac{1}{6}\big(s-(1-\e)\big)^p,\qquad s\geq 1-\e\\
F_1(s),\qquad|s|\leq 1-\e\\
F_1(-1+\e)+F_1'(-1+\e)\big(s-(-1+\e)\big)+\frac{1}{2}F_1''(-1+\e)\big(s-(-1+\e)\big)^2\\
+\frac{1}{6}\big|s-(-1+\e)\big|^p,\qquad s\leq -1+\e.
\end{array}\right.\nonumber
\end{equation}
Let us argue as in Theorem \ref{Theor2} to deduce the estimates
for $\phie$ and $\ue$ (and their time derivatives).
Instead, the passage to the limit will be performed differently with respect to
Theorem \ref{Theor2}.


We first check that \eqref{est26} is still true for this new approximate potential.
Indeed, for $1-\e\leq s<1$, thanks to \eqref{furthass} we have, for some $\xi\in(1-\e,s)$
\begin{align}
F_1(s)&=F_1(1-\e)+F_1'(1-\e)\big(s-(1-\e)\big)+\frac{1}{2}F_1''(1-\e)\big(s-(1-\e)\big)^2\nonumber\\
&+\frac{1}{6}F_1'''(\xi)\big(s-(1-\e)\big)^3 \nonumber\\
&\geq F_1(1-\e)+F_1'(1-\e)\big(s-(1-\e)\big)+\frac{1}{2}F_1''(1-\e)\big(s-(1-\e)\big)^2\nonumber\\
&+\frac{1}{6}\big(s-(1-\e)\big)^p= \Fie(s),\nonumber
\end{align}
provided that $\e\in(0,\e_0]$, with $\e_0$ as in \eqref{furthass}. For $-1<s\leq -1+\e$ we argue similarly.

It is easy to see that the uniform (with respect to $\e$) coercivity condition
\eqref{est20} will now be replaced by (cf. also (A2))
\begin{align}
&\Fe(s)\geq\frac{1}{24}|s|^p-c_p,\qquad\forall s\in\mathbb{R},\qquad\forall\e\in(0,\e_0],
\label{est27}
\end{align}
where the constant $c_p$ depends on $p$ but is independent of $\e$.
It is also immediate to check that \eqref{est21} and (H7) hold for the new approximate potential.
Therefore, since we have also $\Fe\in C^{2,1}_{loc}(\mathbb{R})$, assumptions (H1)--(H6) of Theorem \ref{thm}, with (H4)
replaced by (H7), are satisfied. Hence, for every fixed $\e\in(0,\e_0]$, Problem P$_\e$
admits a weak solution satisfying \eqref{propreg1}, the first of \eqref{propreg2}, \eqref{propreg4}, \eqref{propreg5}, and the energy inequality \eqref{eiappeps}.
By means of \eqref{est22}, \eqref{est21} and \eqref{est27}, from the approximate energy inequality we can still recover estimates
\eqref{estep1}, \eqref{estep3}, \eqref{estep4} and \eqref{estep5}, while
estimate \eqref{estep2} will now be substituted with the stronger
\begin{align}
&\Vert\phie\Vert_{L^\infty(0,T;L^p(\Omega))}\leq C.
\end{align}
The estimates for $\ue'$ and $\phie'$ can be improved as well.
Indeed, instead of \eqref{estep6} and \eqref{estep8} we now find
\begin{align}
&\Vert\phie'\Vert_{L^2(0,T;V')}\leq C,\qquad d=3,
\end{align}
and
\begin{align}
&\Vert\ue'\Vert_{L^2(0,T;V_{div}')}\leq C,\qquad
\Vert\phie'\Vert_{L^2(0,T;V')}\leq C,\qquad d=2,
\end{align}
respectively, while \eqref{estep7} still holds. By compactness results we can still deduce that
there exists a couple $z:=[u,\varphi]$ and a subsequence such that
\eqref{conve1}--\eqref{conve7} hold. Instead, in \eqref{conve4}, \eqref{conve12} and \eqref{conve7}
we shall have $L^2$ in place of $L^{2-\gamma}$, $L^{4/3}$ and $L^{2-\delta}$, respectively,
and
\begin{align}
&\phie\rightharpoonup\varphi\qquad\mbox{weakly}^{\ast}\mbox{ in }L^{\infty}(0,T;L^p(\Omega)),
\quad\mbox{weakly in }L^2(0,T;V),\label{conve17}
\end{align}
in place of \eqref{conve5}.

As far as the passage to the limit is concerned, the situation is now a bit different. Indeed,
due to the polynomial growth of $\Fie$, a strong convergence for the term
$\me(\phie)\Fie''(\phie)$ in some $L^s(Q_T)$ space with $s\geq 2$
is no longer available in order to deduce the weak formulation \eqref{wform1} for $\varphi$.
Therefore, the idea is to use \eqref{cond2bis}, argue as in Theorem \ref{Theor3},
get a control for $\mue$ in $L^2(0,T;V)$ and pass to the limit to deduce
the weak formulation \eqref{wdefnd1} for $\varphi$.
Of course, we have to check that the argument of Theorem \ref{Theor3}
still applies with this different choice of $\Fie$.
Now, it is easy to verify that the test function $\psi=\Fie'(\phie)\Fie''(\phie)$
in the weak formulation of \eqref{Pbe1} is still in $V$.
Indeed, $\Fie''$ is now locally Lipschitz on $\mathbb{R}$
and $\phie\in V\cap L^\infty(\Omega)$, which implies that
$\Fie''(\phie)\in V\cap L^\infty(\Omega)$. Thus $\psi\in V$.

By repeating the calculations in the first part of the proof of
Theorem \ref{Theor3} we are then led again to the differential
identity \eqref{ideps2}. It is immediate to check that
\eqref{est33} still holds. As far as \eqref{est34} is concerned,
notice that in this case we have for, e.g., $s> 1-\e$
\begin{align}
\gamma_\e(s)&=\frac{m^2(1-\e)\frac{1}{6}p(p-1)(p-2)\big(s-(1-\e)\big)^{p-3}}{m(1-\e)\Big(F_1''(1-\e)+\frac{1}{6}p(p-1)\big(s-(1-\e)\big)^{p-2}\Big)}\nonumber\\
&\leq\frac{c_{1,p}m^2(1-\e)\big(s-(1-\e)\big)^{p-3}}{\alpha_0+c_{2,p}m(1-\e)\big(s-(1-\e)\big)^{p-2}},\nonumber
\end{align}
where \eqref{cond0} has been taken into account. Now, if we
consider the function
$$\psi_\delta(\tau):=\frac{\delta^2\tau^{p-3}}{1+\delta\tau^{p-2}},\qquad\forall\tau\geq
0,$$ by an elementary calculation we can see that $\psi_\delta$
has its maximum at
$\tau^\ast:=\big((p-3)/\delta\big)^{\frac{1}{p-2}}$ and that
$\psi_\delta(\tau)\leq\psi_\delta(\tau^\ast)=c_p
\delta^{(p-1)/(p-2)}\leq c_p$, for all $\tau\geq 0$ and for all
$\delta\in(0,1)$, where $c_p$ is a positive constant depending
only on $p$. Hence we deduce that $|\gamma_\e(s)|\leq c_p'$ for
all $|s|>1-\e$ and for all $\e\in (0,1)$. Using also the fact that
$\gamma_\e(s)=m^2(s)F_1'''(s)/m(s)F_1''(s)$ for $|s|\leq 1-\e$,
and taking \eqref{cond0} and \eqref{cond1} into account, we can
therefore conclude that
\begin{align}
&|\gamma_\e(s)|\leq\gamma_{0,p},\qquad \forall
s\in\mathbb{R},\qquad\forall\e\in(0,1),
\end{align}
with $\gamma_{0,p}$ given by
$\gamma_{0,p}=\max(c_p',\beta_0/\alpha_0)$ and independent of
$\e$. Hence, \eqref{est34} is still satisfied.

On the other hand, using (A2) and \eqref{furthass2}, it is easy to check that
 also conditions
 $$\Fie'\Fie'''\geq 0\qquad\mbox{ on }\:\:\mathbb{R},\qquad\forall\e\in(0,1),$$
and \eqref{est35} still hold.
We can therefore conclude that the argument of Theorem \ref{Theor3}
applies and that, due also to \eqref{cond2bis}, the following estimates hold (cf. \eqref{est18})
\begin{align}
&\Vert\mue\Vert_{L^\infty(0,T;H)}\leq C,\qquad\Vert\nabla\mue\Vert_{L^2(0,T;H)}\leq C,
\label{est36}
\end{align}
where the constant $C$ depends on $\Vert\mu_0\Vert$ or, equivalently, on $\Vert F'(\varphi_0)\Vert$.

Finally, let us consider the weak formulation \eqref{wdefnd1} and \eqref{wdefnd2}
for the approximate problem \eqref{Pbe1}--\eqref{Pbe6}. By passing to the limit as $\e\to 0$
and by means of \eqref{est36} and the convergences provided in the proof of Theorem \ref{Theor2},
it is not difficult to show that $z=[u,\varphi]$, which in particular satisfies \eqref{phiLq2}
due to the first of \eqref{conve17}, satisfies the weak formulation \eqref{wdefnd1} and \eqref{wdefnd2}.
\end{proof}

\section{The global attractor in 2D}\setcounter{equation}{0}
\label{sec:long}

In this section we consider system \eqref{sy1}--\eqref{sy6} for $d=2$
and we suppose that the external force $h$ is
time-independent, namely,
\begin{description}
\item[(A5)]  $h\in V_{div}'.$
\end{description}

Let us introduce the set $\mathcal{G}_{m_0}$ of all weak solutions
to \eqref{sy1}--\eqref{sy6} (in the sense
of Definition \ref{wsdeg}) corresponding to all
initial data $z_0=[u_0,\varphi_0]\in\mathcal{X}_{m_0}$, where the
phase space $\mathcal{X}_{m_0}$ is the metric space defined by
\begin{align}
&\mathcal{X}_{m_0}:=G_{div}\times\mathcal{Y}_{m_0},\nonumber
\end{align}
with
$\mathcal{Y}_{m_0}$ given by
\begin{align}
&\label{phasesp2} &\mathcal{Y}_{m_0}:=\big\{\varphi\in
L^{\infty}(\Omega): |\varphi|\leq 1\:\:\mbox{ a.e. in
}\Omega,\:\:\:F(\varphi),\color{black} M(\varphi)\color{black}\in L^1(\Omega),
\:\:\:|\overline{\varphi}|\leq m_0\big\},
\end{align}
and $m_0\in [0,1]$ is fixed.
 The metric on
$\mathcal{X}_{m_0}$ is
\begin{align}
&\boldsymbol{d}(z_2,z_1):=
\Vert u_2-u_1\Vert+\Vert\varphi_2-\varphi_1\Vert,\nonumber
\end{align}
for every $z_1:=[u_1,\varphi_1]$ and $z_2:=[u_2,\varphi_2]$ in $\mathcal{X}_{m_0}$.

In the next proposition we shall prove that the set of all weak solution is a generalized semiflow
in the sense of J.M. Ball (cf. \cite{Ba}).
\begin{prop}
\label{gensemifl} Let $d=2$ and suppose that
the conditions of Theorem \ref{Theor2} are satisfied.
Assume also that
\eqref{cond-1}, \eqref{cond0} and that (A5) {\color{black} hold.}
Then $\mathcal{G}_{m_0}$
is a generalized semiflow on $\mathcal{X}_{m_0}$.
\end{prop}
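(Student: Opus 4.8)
The plan is to verify the four defining properties of a generalized semiflow in the sense of Ball \cite{Ba} for the family $\mathcal{G}_{m_0}$: existence of a trajectory through every datum of $\mathcal{X}_{m_0}$, invariance under positive time translations, stability under concatenation, and the upper semicontinuity property with respect to convergence of initial data in the metric $\boldsymbol{d}$. The decisive tool throughout will be the second energy identity \eqref{energeq}, which holds for \emph{every} weak solution of Definition \ref{wsdeg} when $d=2$, since the regularity \eqref{reg3}, \eqref{reg5} legitimates the choice $\psi=\varphi$, $v=u$ in \eqref{wform1}, \eqref{wform2}. Existence is a direct consequence of Theorem \ref{Theor2}, once one notes that \eqref{cond0} gives $m F_1''\geq\alpha_0$, whence $M''=1/m\leq F_1''/\alpha_0$ and therefore $M(\varphi_0)\in L^1(\Omega)$ whenever $F(\varphi_0)\in L^1(\Omega)$; thus each $z_0\in\mathcal{X}_{m_0}$ is an admissible datum. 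Mass conservation $\overline{\varphi}(t)=\overline{\varphi_0}$ together with $|\varphi(t)|\leq 1$ (cf. \eqref{est25}) keeps the solution in $\mathcal{X}_{m_0}$, while strong continuity $z\in C([0,\infty);\mathcal{X}_{m_0})$ follows by combining the weak continuity of Remark \ref{inconds} with the absolute continuity of $t\mapsto\frac12(\|u(t)\|^2+\|\varphi(t)\|^2)$ granted by \eqref{energeq}.

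Translation invariance is immediate because the system is autonomous ($h\in V_{div}'$ is time-independent by (A5)): if $z\in\mathcal{G}_{m_0}$ then $z(\cdot+\tau)$ again solves \eqref{wform1}, \eqref{wform2} and starts at $z(\tau)\in\mathcal{X}_{m_0}$. Concatenation is equally routine: if $z_1,z_2\in\mathcal{G}_{m_0}$ with $z_2(0)=z_1(t_0)$, the function equal to $z_1$ on $[0,t_0]$ and to $z_2(\cdot-t_0)$ afterwards inherits the regularity \eqref{reg1}--\eqref{reg6}, is continuous into $\mathcal{X}_{m_0}$, and, having no jump in $V'\times V_{div}'$, satisfies \eqref{wform1}, \eqref{wform2} for almost every $t\in(0,\infty)$ since these hold on each subinterval.

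The real content is the upper semicontinuity property. Given $z_{0n}\to z_0$ in $\mathcal{X}_{m_0}$ and $z_n=[u_n,\varphi_n]\in\mathcal{G}_{m_0}$ with $z_n(0)=z_{0n}$, I would first extract uniform-in-$n$ bounds directly from \eqref{energeq}, \emph{bypassing} the free energy $\mathcal{E}$: conditions \eqref{cond-1} and \eqref{cond0} yield $m(F''+a)\geq(1-\rho)\,mF_1''\geq(1-\rho)\alpha_0>0$, so the dissipation in \eqref{energeq} controls $(1-\rho)\alpha_0\|\nabla\varphi_n\|^2+\nu\|\nabla u_n\|^2$. Bounding the right-hand side of \eqref{energeq} by Young's inequality (using $|\varphi_n|\leq1$, $\nabla a\in L^\infty$, (H2)) and applying Gronwall's lemma furnishes, on every $[0,T]$, bounds for $u_n$ in $L^\infty(0,T;G_{div})\cap L^2(0,T;V_{div})$ and $\varphi_n$ in $L^\infty(0,T;H)\cap L^2(0,T;V)$ depending only on $\|z_{0n}\|$, hence uniform; a comparison argument as in Theorem \ref{Theor2} then bounds $u_n'$, $\varphi_n'$ in $L^2(0,T;V_{div}')$ and $L^2(0,T;V')$. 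By Aubin--Lions I obtain (up to subsequence) $u_n\to u$ in $L^2(0,T;G_{div})$ and $\varphi_n\to\varphi$ in $L^2(0,T;H)$, a.e. in $Q_T$, together with the limits of type \eqref{conve1}--\eqref{conve7}; arguing exactly as in Theorem \ref{Theor2} (continuity of $mF''$ on $[-1,1]$, a.e. convergence and Lebesgue's theorem, cf. \eqref{conve8}, \eqref{conve14}) one passes to the limit in \eqref{wform1}, \eqref{wform2}, recovers $|\varphi|\leq1$ and $\overline{\varphi}(t)=\overline{\varphi_0}$, so $z=[u,\varphi]\in\mathcal{G}_{m_0}$; an Arzel\`a-type argument based on the time-derivative bounds gives $z_n(t)\rightharpoonup z(t)$ in $\mathcal{X}_{m_0}$ for every $t\geq0$, in particular $z(0)=z_0$.

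It remains to upgrade this pointwise weak convergence to convergence in $\boldsymbol{d}$, and here I would use Ball's energy method with $E_n(t):=\frac12(\|u_n(t)\|^2+\|\varphi_n(t)\|^2)$. Integrating \eqref{energeq} gives $E_n(t)=E_n(0)-\int_0^tD_n+\int_0^tR_n$, with dissipation $D_n\geq0$ and remainder $R_n$. Now $E_n(0)\to E(0)$ because $z_{0n}\to z_0$ in $\mathcal{X}_{m_0}$; the remainder satisfies $\int_0^tR_n\to\int_0^tR$, the term $\int m(\varphi_n)(\nabla J\ast\varphi_n-\varphi_n\nabla a)\cdot\nabla\varphi_n$ converging by weak--strong pairing (its coefficient is strongly convergent in $L^2(Q_T)$ against the weak limit $\nabla\varphi_n\rightharpoonup\nabla\varphi$), the convective term being first rewritten as $-\int(\frac12\varphi_n^2\nabla a-\varphi_n\nabla J\ast\varphi_n)\cdot u_n$ exactly as in the proof of Corollary \ref{cor1} and then treated via the strong $L^4(Q_T)$ convergence of $u_n,\varphi_n$ (two-dimensional embedding $L^\infty(0,T;H)\cap L^2(0,T;V)\hookrightarrow L^4(Q_T)$), and $\int_0^t\langle h,u_n\rangle\to\int_0^t\langle h,u\rangle$; finally the dissipation is weakly lower semicontinuous, $\int_0^tD\leq\liminf_n\int_0^tD_n$, since $\sqrt{m(\varphi_n)(F''(\varphi_n)+a)}\,\nabla\varphi_n\rightharpoonup\sqrt{m(\varphi)(F''(\varphi)+a)}\,\nabla\varphi$ in $L^2(Q_T)$. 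Passing to the superior limit and using that the limit $z$ itself obeys \eqref{energeq} yields $\limsup_nE_n(t)\leq E(t)$, which with the weak lower semicontinuity $E(t)\leq\liminf_nE_n(t)$ forces $\|u_n(t)\|^2+\|\varphi_n(t)\|^2\to\|u(t)\|^2+\|\varphi(t)\|^2$; combined with the separate weak convergences this gives $\|u_n(t)\|\to\|u(t)\|$ and $\|\varphi_n(t)\|\to\|\varphi(t)\|$, hence strong convergence of both components and $\boldsymbol{d}(z_n(t),z(t))\to0$ for every $t$. The main obstacle is precisely this last property: securing uniform estimates and the full energy balance \emph{through} the degeneracy of $m$, for which the identity \eqref{energeq}, rather than the free-energy inequality \eqref{ei}, and the coercivity extracted from \eqref{cond-1}--\eqref{cond0} are indispensable.
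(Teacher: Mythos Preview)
Your proposal is correct and follows essentially the same strategy as the paper: both arguments dispose of (H1)--(H3) quickly and concentrate on (H4), deriving uniform a priori bounds from the energy identity \eqref{energeq} together with the coercivity $m(F''+a)\geq(1-\rho)\alpha_0$ coming from \eqref{cond-1}--\eqref{cond0}, extracting convergent subsequences by Aubin--Lions, and then upgrading pointwise weak convergence to strong convergence via an energy argument. The only noteworthy difference lies in this last step: the paper absorbs the right-hand side of \eqref{energeq} into a modified functional $\widetilde{E}(z_j(t))=E(z_j(t))-\int_0^t R_j$, observes that $\widetilde{E}$ is non-increasing, shows $\widetilde{E}(z_j(t))\to\widetilde{E}(z(t))$ for almost every $t$, and then invokes monotonicity plus continuity to extend the convergence to every $t$; you instead run the direct limsup/liminf comparison, using weak lower semicontinuity of the dissipation $\int_0^t D_n$. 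Both are standard variants of Ball's method; the paper's version avoids checking lower semicontinuity of the full dissipation (only $D\geq0$ is needed), while yours is slightly more transparent. Your observation that \eqref{cond0} forces $M(\varphi_0)\in L^1(\Omega)$ whenever $F(\varphi_0)\in L^1(\Omega)$, so that every $z_0\in\mathcal{X}_{m_0}$ is admissible for Theorem \ref{Theor2}, is a useful point the paper leaves implicit.
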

\begin{proof}
It is immediate to check that hypotheses (H1), (H2) and (H3) of
the definition of generalized semiflow \cite[Definition 2.1]{Ba} are
satisfied. It remains to prove the upper semicontinuity with respect to
initial data, i.e., that $\mathcal{G}_{m_0}$ satisfies (H4) of \cite[Definition
2.1]{Ba}. Take then $z_j=[u_j,\phij]\in\mathcal{G}_{m_0}$ such that $z_j(0)\to z_0$
in $\mathcal{X}_{m_0}$. Our aim is to prove that there
exist $z\in\mathcal{G}_{m_0}$ with $z(0)=z_0$ and a subsequence $\{z_{j_k}\}$
such that $z_{j_k}(t)\to z(t)$ in $\mathcal{X}_{m_0}$ for all $t\geq 0$.
Now, each weak solution $z_j=[u_j,\phij]$ satisfies the regularity
properties \eqref{reg1}--\eqref{reg6}, \eqref{phiLq} and the energy equation
\eqref{energeq} which,
 on account of \eqref{cond-1}, \eqref{cond0}
 and of the fact that $|\phij|\leq 1$, implies
\begin{align}
&
\frac{d}{dt}
\Big(\Vert u_j\Vert^2+\Vert\phij\Vert^2\Big)+(1-\rho)\alpha_0\Vert\nabla\phij\Vert^2
+\nu\Vert\nabla u_j\Vert^2
\leq c+c\Vert u_j\Vert^2+\frac{1}{2\nu}\Vert h\Vert_{V_{div}'}^2,\nonumber
\end{align}
where the positive constant $c$ depends on $J$ and on $m$. By integrating this inequality between $0$
and $t$, using the fact that $z_j(0)\to z_0$ in $\mathcal{X}_{m_0}$ and Gronwall's lemma, from
the above differential inequality we get
\begin{align}
&\|u_j\|_{L^{\infty}(0,T;G_{div})\cap L^2(0,T;V_{div})}\leq C,\qquad
\|\phij\|_{L^{\infty}(0,T;H)\cap L^2(0,T;V)}\leq C.\nonumber
\end{align}
By comparison in the variational formulation \eqref{wform1} and \eqref{wform2}
written for each weak solution $z_j=[u_j,\varphi_j]$ we also obtain the estimates
for the time derivatives $u_j'$ and $\varphi_j'$
\begin{align}
&\Vert u_j'\Vert_{L^2(0,T;V_{div}')}\leq C,\qquad\Vert\phij'\Vert_{L^2(0,T;V')}\leq C.\nonumber
\end{align}
Therefore, by standard compactness results, we deduce that there exist two functions $u$ and $\varphi$ such that,
for a not relabeled subsequence, we have
\begin{align}
& u_j\rightharpoonup u\quad\mbox{weakly}^{\ast}\mbox{ in } L^{\infty}(0,T;G_{div}),
\quad\mbox{weakly in }L^2(0,T;V_{div}),\label{convj1}\\
& u_j\to u\quad\mbox{strongly in }L^2(0,T;G_{div}),\quad\mbox{a.e. in }Q_T,\label{convj2}\\
& u_j'\rightharpoonup u_t
\quad\mbox{weakly in }L^2(0,T;V_{div}'),\qquad\:\:\label{convj3}\\
& \phij\rightharpoonup\varphi\quad\mbox{weakly}^{\ast}\mbox{ in }L^{\infty}(0,T;H),
\quad\mbox{weakly in }L^2(0,T;V),\label{convj4}\\
& \phij\to\varphi\quad\mbox{strongly in }L^2(0,T;H),\quad\mbox{a.e. in }Q_T,\label{convj5}\\
& \phij'\to\varphi_t\quad\mbox{weakly in }L^2(0,T;V'). \label{convj6}
\end{align}
Passing to the limit in the variational formulation for $z_j=[u_j,\varphi_j]$ and using
the weak/strong convergences \eqref{convj1}--\eqref{convj6}
and the fact that $|\phij|\leq 1$, we
get that $z:=[u,\varphi]\in\mathcal{G}_{m_0}$ (the argument is similar to the passage to the limit in
\eqref{wsappb1} and \eqref{wsappb2} in the proof of Theorem \ref{Theor2}).
Furthermore we have $z(0)=z_0$, since, as a consequence of
\eqref{convj1}, \eqref{convj3}, \eqref{convj4} and \eqref{convj6}, we have, for all $t\geq 0$
\begin{align}
& u_j(t)\rightharpoonup u(t)\qquad\mbox{weakly in }\:\:G_{div},\label{wcon1}\\
& \phij(t)\rightharpoonup \varphi(t)\qquad\mbox{weakly in }\:\:H.\label{wcon2}
\end{align}
Let us now see that $z_j(t)\to z(t)$ in $\mathcal{X}_{m_0}$ for all $t\geq 0$. First observe that
the energy equation \eqref{energeq} can be written in the form
\begin{align}
&\frac{d}{dt}\widetilde{E}\big(z_j(t)\big)+\int_\Omega m(\phij)F''(\phij)|\nabla\phij|^2
+\int_\Omega a m(\phij)|\nabla\phij|^2+\nu\Vert\nabla u_j\Vert^2=0,
\label{enj}
\end{align}
where
\begin{align}
&\widetilde{E}\big(z_j(t)\big):=E\big(z_j(t)\big)-\int_0^t\int_\Omega m(\phij)\big(\nabla J\ast\phij-\phij\nabla a)\cdot\nabla\phij\nonumber\\
&-\int_0^t\int_\Omega(a\phij-J\ast\phij) u_j\cdot\nabla\phij-\int_0^t\langle h,u_j\rangle d\tau,\nonumber
\end{align}
and
\begin{align}
& E\big(z_j(t)\big):=\frac{1}{2}\big(\Vert u_j(t)\Vert^2+\Vert\phij(t)\Vert^2\big).\nonumber
\end{align}
On the other hand, from \eqref{convj2} and \eqref{convj5} we have, for almost any $t>0$,
\begin{align}
& u_j(t)\to u(t)\qquad\mbox{strongly in }\:\:G_{div},\nonumber\\
& \phij(t)\to \varphi(t)\qquad\mbox{strongly in }\:\:H,\nonumber
\end{align}
and hence $E\big(z_j(t)\big)\to E\big(z(t)\big)$ for almost any $t>0$.
Moreover, due to the condition $|\varphi_j|\leq 1$ for each $j$, to the pointwise convergence
\eqref{convj5} and Lebesgue's theorem we have
\begin{align}
&\varphi_j\to\varphi,\qquad\mbox{ strongly in }\:\: L^s(Q_T),\qquad\forall s\in[2,\infty).\nonumber
\end{align}
Therefore, recalling
that $m(\varphi_j)\to m(\varphi)$ strongly in $L^s(Q_T)$ for all $s\in [2,\infty)$ and using
the second weak convergence in \eqref{convj4}, we have, for all $t\geq 0$,
\begin{align}
&\int_0^t\int_\Omega m(\phij)\big(\nabla J\ast\phij-\phij\nabla a)\cdot\nabla\phij
\to\int_0^t\int_\Omega m(\varphi)\big(\nabla J\ast\varphi-\varphi\nabla a)\cdot\nabla\varphi.\nonumber
\end{align}
Furthermore, since $a\phij-J\ast\phij\to a\varphi-J\ast\varphi$ strongly
in $L^s(Q_T)$ for all $s\in[2,\infty)$ and, by \eqref{convj1},
$u_j\to u$ strongly in $L^s(Q_T)^2$ for all $s\in[2,4)$, then we also have, for all $t\geq 0$
\begin{align}
&\int_0^t\int_\Omega(a\phij-J\ast\phij) u_j\cdot\nabla\phij\to
\int_0^t\int_\Omega(a\varphi-J\ast\varphi) u\cdot\nabla\varphi.\nonumber
\end{align}
Therefore
\begin{align}
&\widetilde{E}\big(z_j(t)\big)\to\widetilde{E}\big(z(t)\big),\qquad\mbox{ a.a. }\:\:t>0.\nonumber
\end{align}
Since, due to \eqref{enj}, $\widetilde{E}\big(z_j(\cdot)\big)$ is non increasing in $[0,\infty)$ for every $j$
and $\widetilde{E}\big(z_j(\cdot)\big),\widetilde{E}\big(z(\cdot)\big)\in C([0,\infty))$,
we infer
\begin{align}
&\widetilde{E}\big(z_j(t)\big)\to\widetilde{E}\big(z(t)\big),\qquad\forall\:\:t\geq 0,\nonumber
\end{align}
and hence also $ E\big(z_j(t)\big)\to E\big(z(t)\big)$ for all $t\geq 0$. This last convergence
together with \eqref{wcon1} and \eqref{wcon2} yield
$u_j(t)\to u(t)$ strongly in $G_{div}$ and
$\phij(t)\to \varphi(t)$ strongly in $H$, and hence $z_j(t)\to z(t)$ in $\mathcal{X}_{m_0}$, for all $t\geq 0$.
\end{proof}

\begin{prop}
\label{dissest} Let $d=2$
and suppose that
the conditions of Theorem \ref{Theor2} are satisfied.
Assume also that \eqref{cond-1}, \eqref{cond0} and (A5) hold.
\color{black}
Then $\mathcal{G}_{m_0}$ is point dissipative and eventually bounded.
\end{prop}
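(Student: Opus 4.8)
The plan is to derive a single dissipative differential inequality, uniform over all of $\mathcal{G}_{m_0}$, for the reduced energy $\|u\|^2+\|\varphi\|^2$, and then to read off both properties from the resulting Gronwall estimate. Recall (cf. \cite[Definition 2.1]{Ba}) that point dissipativity asks for a bounded set $B_0\subset\mathcal{X}_{m_0}$ attracting every point, while eventual boundedness asks that the forward orbit $\bigcup_{t\geq\tau}T(t)B$ of any bounded set $B$ be bounded for some $\tau\geq 0$; since in the metric $\boldsymbol{d}$ the $\varphi$--component is automatically bounded (as $|\varphi|\leq 1$ forces $\|\varphi\|\leq|\Omega|^{1/2}$), both notions reduce to an a priori control of $\|u\|$. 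The starting point is the energy equation \eqref{energeq}, which holds for every $z=[u,\varphi]\in\mathcal{G}_{m_0}$: indeed, in dimension two the regularity \eqref{reg3}, \eqref{reg5} together with $\varphi\in L^2(0,T;V)$ and $|\varphi|\leq 1$ permit the choices $\psi=\varphi$, $v=u$ exactly as in the proof of Corollary \ref{cor1}.

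First I would bound the dissipative terms from below. Writing $F''=F_1''+F_2''$ and using \eqref{cond-1} and \eqref{cond0} together with $m\geq 0$, one gets
\begin{align}
\int_\Omega m(\varphi)\big(F''(\varphi)+a\big)|\nabla\varphi|^2\geq(1-\rho)\alpha_0\|\nabla\varphi\|^2.\nonumber
\end{align}
Next I would estimate the right-hand side of \eqref{energeq}. The term $\int_\Omega m(\varphi)(\nabla J\ast\varphi-\varphi\nabla a)\cdot\nabla\varphi$ is controlled by $\tfrac{(1-\rho)\alpha_0}{4}\|\nabla\varphi\|^2+C$, using that $m(\varphi)$ is bounded (by continuity of $m$ on $[-1,1]$ and $|\varphi|\leq 1$), that $\|\nabla J\ast\varphi\|_{L^\infty}\leq b$ and $|\nabla a|\leq b$ by (H2), and Young's inequality. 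The forcing is handled by $\langle h,u\rangle\leq\tfrac{\nu}{2}\|\nabla u\|^2+\tfrac{1}{2\nu}\|h\|_{V_{div}'}^2$. The crucial term is the coupling $\int_\Omega(a\varphi-J\ast\varphi)u\cdot\nabla\varphi$: here I would first rewrite it, integrating by parts and using $\mathrm{div}\,u=0$ and $u_{|\partial\Omega}=0$ exactly as in the proof of Corollary \ref{cor1}, as $-\int_\Omega\big(\tfrac12\varphi^2\nabla a-\varphi(\nabla J\ast\varphi)\big)\cdot u$. Since $|\varphi|\leq 1$ and, by (H2), $|\nabla a|\leq b$ and $|\nabla J\ast\varphi|\leq b$, the integrand is pointwise bounded, so this term is at most $C\|u\|_{L^1}\leq\tfrac{\nu\lambda_1}{4}\|u\|^2+C$.

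Collecting these estimates in \eqref{energeq}, using the Poincar\'{e} inequality $\lambda_1\|u\|^2\leq\|\nabla u\|^2$ to absorb the residual $\|u\|^2$ into $\nu\|\nabla u\|^2$, and the Poincar\'{e}--Wirtinger inequality together with the conservation $\overline{\varphi}(t)=\overline{\varphi_0}$, $|\overline{\varphi_0}|\leq m_0$, to bound $\|\varphi\|^2$ by $\|\nabla\varphi\|^2$ up to a constant, I expect to arrive at
\begin{align}
\frac{d}{dt}\big(\|u\|^2+\|\varphi\|^2\big)+k\big(\|u\|^2+\|\varphi\|^2\big)\leq C,\nonumber
\end{align}
for almost every $t>0$, with $k>0$ and $C>0$ depending only on $\Omega$, $\nu$, $J$, $m$, $F$, $\rho$, $\alpha_0$ and $\|h\|_{V_{div}'}$, but not on the particular solution nor on its initial datum. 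Gronwall's lemma then yields $\|u(t)\|^2+\|\varphi(t)\|^2\leq(\|u_0\|^2+\|\varphi_0\|^2)e^{-kt}+C/k$. Point dissipativity follows by taking $B_0:=\{z\in\mathcal{X}_{m_0}:\|u\|^2\leq C/k+1\}$, which is bounded and is entered by every orbit after a finite time; eventual boundedness follows since any bounded $B$ has $\sup_{z_0\in B}\|u_0\|\leq M<\infty$, whence $\|u(t)\|^2\leq M^2+|\Omega|+C/k$ for all $t\geq 0$ and all solutions issuing from $B$.

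The only delicate point is the coupling term: in a general setting its coefficient in front of $\|u\|^2$ cannot be made small independently of the data, so dissipativity would require restrictions on the data. Here the pointwise bound $|\varphi|\leq 1$, which is the hallmark of the degenerate-mobility formulation, is precisely what allows the rewritten coupling term to be estimated by $\tfrac{\nu\lambda_1}{4}\|u\|^2+C$ and hence absorbed by viscous dissipation, uniformly in $\mathcal{G}_{m_0}$.
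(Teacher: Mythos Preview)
Your proposal is correct and follows essentially the same route as the paper: both start from the energy identity \eqref{energeq}, bound the dissipative term from below via \eqref{cond-1}--\eqref{cond0}, rewrite and estimate the coupling term $\int_\Omega(a\varphi-J\ast\varphi)u\cdot\nabla\varphi$ using $|\varphi|\leq 1$ and (H2), and then apply Poincar\'{e}/Poincar\'{e}--Wirtinger plus Gronwall to obtain the exponential decay estimate. The only cosmetic difference is that the paper absorbs the coupling term into $\tfrac{\nu}{4}\Vert\nabla u\Vert^2$ rather than $\tfrac{\nu\lambda_1}{4}\Vert u\Vert^2$, which is immaterial.
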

\begin{proof}

Let us estimate the second term on the right hand side of \eqref{energeq} as follows
\begin{align}
&\Big|\int_\Omega(a\varphi-J\ast\varphi)u\cdot\nabla\varphi\Big|
=\Big|\int_{\Omega}\Big(\frac{1}{2}\varphi^2\nabla a-\varphi(\nabla J\ast\varphi)\Big)\cdot u\Big|\nonumber\\
&\leq\frac{3}{2}b|\Omega|^{1/2}\Vert u\Vert\leq\frac{\nu}{4}\Vert\nabla u\Vert^2+C_0,\nonumber
\end{align}
where $C_0=9b^2|\Omega|/4\nu\lambda_1$ with the constant $b$ defined as in (H2).
Moreover, the first term on the right hand side of \eqref{energeq} can be controlled in the following way
\begin{align}
&\Big|\int_\Omega m(\varphi)(\nabla J\ast\varphi-\varphi\nabla a)\cdot\nabla\varphi\Big|
\leq 2m_0|\Omega|^{1/2}b\Vert\nabla\varphi\Vert\leq(1-\rho)
\frac{\alpha_0}{2}\Vert\nabla\varphi\Vert^2+C_1,\nonumber
\end{align}
where $C_1=2 m_\ast^2|\Omega|b^2/\alpha_0$, and $m_\ast=\max_{s\in[-1,1]}m(s)$. Then, by taking
 \eqref{cond-1} and \eqref{cond0}
into account, we get the differential inequality
\begin{align}
&\frac{d}{dt}\big(\Vert u\Vert^2+\Vert\varphi\Vert^2\big)+(1-\rho)
\alpha_0\Vert\nabla\varphi\Vert^2+\nu\Vert\nabla u\Vert^2
\leq C_2+\frac{1}{\nu}\Vert h\Vert_{V_{div}'}^2,\nonumber
\end{align}
where $C_2=2(C_0+C_1)$.
By using the identity
 $\Vert\varphi\Vert^2=\Vert\varphi-\overline{\varphi}_0\Vert^2+\Vert\overline{\varphi}_0\Vert^2$,
 we obtain
\begin{align}
&\frac{d}{dt}
\big(\Vert u\Vert^2+\Vert\varphi-\overline{\varphi}_0\Vert^2\big)+(1-\rho)
\alpha_0 C_P\Vert\varphi-\overline{\varphi}_0\Vert^2
+\nu\lambda_1\Vert u\Vert^2\leq C_2+\frac{1}{\nu}\Vert h\Vert_{V_{div}'}^2,\nonumber
\end{align}
where
$C_P$
is the constant appearing in the Poincar\'{e}-Wirtinger inequality. Therefore we get
\begin{align}
&\frac{d}{dt} E\big(\widetilde{z}(t)\big)+\eta E\big(\widetilde{z}(t)\big)\leq C_3,
\label{est14}
\end{align}
where $\widetilde{z}=[u,\varphi-\overline{\varphi}_0]$ and the constants $\eta$ and $C_3$
are given by $\eta=\min((1-\rho)\alpha_0 C_P,\nu\lambda_1)/2$, and
$C_3=C_2/2+\Vert h\Vert_{V_{div}'}^2/2\nu$, respectively.
By Gronwall's lemma from \eqref{est14} we have
$$ E\big(\widetilde{z}(t)\big)\leq E\big(\widetilde{z}(0)\big)e^{-\eta t}+\frac{C_3}{\eta},\qquad\forall t\geq 0.$$
This estimate easily yields
\begin{align}
&\boldsymbol{d}^2(z(t),0)\leq\boldsymbol{d}^2(z_0,0)e^{-\eta t}
+\frac{2 C_3}{\eta}+|\overline{\varphi}_0|^2|\Omega|,\qquad\forall t\geq 0,\nonumber
\end{align}
which entails both the point dissipativity and the eventual boundedness of $\mathcal{G}_{m_0}$.
\end{proof}

We can now state the result on the existence of the global attractor.
\begin{prop}
Let the assumptions of Proposition \ref{dissest} hold.
Then $\mathcal{G}_{m_0}$ possesses a global attractor.
\end{prop}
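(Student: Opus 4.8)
The plan is to invoke the abstract criterion of Ball \cite{Ba}: a generalized semiflow that is point dissipative and asymptotically compact possesses a (unique) global attractor. Proposition~\ref{gensemifl} already guarantees that $\mathcal{G}_{m_0}$ is a generalized semiflow on $\mathcal{X}_{m_0}$, and Proposition~\ref{dissest} supplies point dissipativity together with eventual boundedness. Hence the only missing ingredient is \emph{asymptotic compactness}, namely: for every sequence $\{z_j\}\subset\mathcal{G}_{m_0}$ with $\{z_j(0)\}$ bounded in $\mathcal{X}_{m_0}$ and every $t_j\to\infty$, the sequence $\{z_j(t_j)\}$ has a subsequence converging in $\mathcal{X}_{m_0}$. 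This is what I would prove, after which the conclusion follows immediately from \cite{Ba}.

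To set this up, I would use the dissipative estimate of Proposition~\ref{dissest}, which shows that $\boldsymbol{d}^2(z_j(t),0)\leq\boldsymbol{d}^2(z_j(0),0)e^{-\eta t}+2C_3/\eta+|\overline{\varphi}_0|^2|\Omega|$; since $\{z_j(0)\}$ is bounded and $|\overline{\varphi}_0|\leq m_0$, the whole trajectories lie in a fixed bounded set for large times, so in particular $\{z_j(t_j)\}$ is bounded. Fix an arbitrary $T>0$. For $j$ large enough that $t_j\geq T$, set $s_j:=t_j-T\geq 0$ and consider the time-translated solutions $v_j:=z_j(s_j+\cdot)$ on $[0,T+1]$. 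By the translation property of a generalized semiflow (and since $\overline{\varphi}$ is conserved and $|\varphi|\leq 1$ is preserved) each $v_j$ belongs to $\mathcal{G}_{m_0}$, and its initial datum $v_j(0)=z_j(s_j)$ is uniformly bounded. I would then repeat verbatim the uniform a priori bounds and the passage-to-the-limit argument of Proposition~\ref{gensemifl}: along a subsequence (not relabeled) $v_j\to v$ with $v\in\mathcal{G}_{m_0}$, satisfying the analogues of \eqref{convj1}--\eqref{convj6}. In particular the analogues of \eqref{wcon1}--\eqref{wcon2} give $v_j(T)=z_j(t_j)\rightharpoonup v(T)$ weakly in $G_{div}\times H$.

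The crux is upgrading this weak convergence to strong convergence at time $T$, and this is where only weak (not strong) convergence of the initial data $v_j(0)$ is available, so the energy structure is indispensable. Since $d=2$, every weak solution satisfies the energy identity \eqref{energeq}, which in the form \eqref{enj} shows that $\widetilde{E}(v_j(\cdot))$ is nonincreasing on $[0,T+1]$. Exactly as in Proposition~\ref{gensemifl}, the nonlocal coupling terms defining $\widetilde{E}$ converge, and the strong $L^2$ space-time convergences yield $E(v_j(t))\to E(v(t))$, hence $\widetilde{E}(v_j(t))\to\widetilde{E}(v(t))$, for a.e. $t\in(0,T+1)$. The reason for enlarging the interval to $[0,T+1]$ is precisely that the elementary lemma ``monotone functions converging a.e. to a continuous limit converge at every interior point'' requires convergence points on both sides of the target time; applying it at the interior time $t=T$ gives $\widetilde{E}(v_j(T))\to\widetilde{E}(v(T))$, and therefore $E(v_j(T))\to E(v(T))$, i.e. $\Vert v_j(T)\Vert_{G_{div}\times H}\to\Vert v(T)\Vert_{G_{div}\times H}$. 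Combined with the weak convergence $v_j(T)\rightharpoonup v(T)$, this forces strong convergence $z_j(t_j)=v_j(T)\to v(T)$ in $\mathcal{X}_{m_0}$, which is asymptotic compactness. The main obstacle is thus this energy-convergence step: it hinges on the validity of the energy \emph{equality} \eqref{energeq} for $d=2$ (granted by Theorem~\ref{Theor2}), on the monotonicity of $\widetilde{E}$, and on placing $t_j$ at an interior time so that the monotone-convergence lemma applies. With point dissipativity, eventual boundedness and asymptotic compactness all in hand, Ball's theorem \cite{Ba} yields the existence of the global attractor, completing the proof; I would also remark that, since $\mathcal{X}_{m_0}$ is connected, the attractor is connected, although this is not asserted in the statement.
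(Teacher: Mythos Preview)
Your proof is correct and relies on the same core mechanism as the paper---the energy identity \eqref{energeq} in the form \eqref{enj}, the monotonicity of $\widetilde{E}$, and the passage from a.e.\ convergence of $\widetilde{E}(v_j(\cdot))$ to convergence at a fixed time---but the two arguments are organised differently. The paper does not establish asymptotic compactness directly. Instead it verifies Ball's notion of \emph{compactness} for a generalized semiflow (namely, from any sequence $\{z_j\}\subset\mathcal{G}_{m_0}$ with bounded initial data one can extract a subsequence converging in $\mathcal{X}_{m_0}$ for every $t>0$), and then quotes \cite[Proposition~3.2]{Ba} (eventually bounded $+$ compact $\Rightarrow$ asymptotically compact) together with \cite[Theorem~3.3]{Ba}. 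In particular, no time translation $v_j=z_j(s_j+\cdot)$ is needed: one works on a fixed interval with the original trajectories and obtains convergence for all $t>0$ by the same monotone-limit lemma. Your route trades this for a direct verification of asymptotic compactness via translates and the interior-point trick on $[0,T+1]$; this is a genuine alternative and makes the role of eventual boundedness more transparent (it enters as the uniform bound on $v_j(0)$), whereas the paper's route is shorter because it defers the passage from compactness to asymptotic compactness to Ball's abstract result.
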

\begin{proof}\label{attractor}
Thanks to Proposition \ref{dissest} and by \cite[Proposition 3.2]{Ba}
and \cite[Theorem 3.3]{Ba}
we only need to show that $\mathcal{G}_{m_0}$ is compact.
Let $\{z_j\}\subset\mathcal{G}$ be a sequence with $\{z_j(0)\}$ bounded in $\mathcal{X}_{m_0}$. We claim that
there exists a subsequence $\{z_{j_k}\}$ such that $z_{j_k}(t)$ converges in $\mathcal{X}_{m_0}$
for every $t>0$.
Indeed, the energy equation \eqref{energeq} entails the existence of a subsequence (not relabeled)
such that (cf. the proof of Proposition \ref{gensemifl}), for almost all $t>0$,
$$u_j(t)\to u(t)\quad\mbox{strongly in }G_{div},\qquad
\varphi_j(t)\to\varphi(t)\quad\mbox{strongly in }H\mbox{ and a.e. in }\Omega,$$
where $z=[u,\varphi]$ is a weak solution.
By arguing as in the proof of Proposition \ref{gensemifl} we infer that
$\widetilde{E}\big(z_j(t)\big)\to\widetilde{E}\big(z(t)\big)$ for all $t\geq 0$.
Thus $z_j(t)\to z(t)$ in $\mathcal{X}_{m_0}$ for $t>0$,
which yields the compactness of $\mathcal{G}_{m_0}$.
\end{proof}

\begin{oss}
\label{purephases}
{\upshape
We point out that the existence of the global attractor has been
established without the restriction $|\overline{\varphi}|<1$ on
the generalized semiflow  (compare with Remark \ref{pure}).
In particular, this result does not
require the separation property.
It is also worth observing that in dimension three the trajectory attractor approach used in \cite{FG1,FG2}
might be extended to the present case.}
\end{oss}

{\color{black}
\begin{oss}
\label{uniqueness}
As mentioned in the Introduction, uniqueness of weak solutions in two dimensions has been recently proved in \cite{FGG}. As a consequence,
the generalized semiflow becomes a semigroup and the global attractor is connected (see \cite[Section 5]{FGG}).
\end{oss}}}

\section{The convective nonlocal Cahn-Hilliard equation\\ with degenerate mobility}\setcounter{equation}{0}
\label{sec:convCahnHill}

By relying on the results of the previous sections we can prove
similar results for the convective nonlocal Cahn-Hilliard equation
with degenerate mobility and with a given velocity field, for
$d=2,3$. In particular, from Theorem \ref{Theor2} it is
straightforward to deduce the following

\begin{thm}
\label{nlocCH1} Assume that (A1)-(A4) and (H2) hold. Let
$u\in L^2_{loc}([0,\infty);V_{div}\cap L^\infty(\Omega)^d)$ be
given and let $\varphi_0\in L^{\infty}(\Omega)$ be such that
$F(\varphi_0)\in L^1(\Omega)$ and $M(\varphi_0)\in L^1(\Omega)$.
Then, for every $T>0$, there exists a weak solution $\varphi$ to
\eqref{sy1}, \eqref{sy2}, \eqref{sy5}$_1$ on $[0,T]$ corresponding
to $\varphi_0$ fulfilling \eqref{reg4}--\eqref{reg6}, the weak
formulation \eqref{wform1}, and such that
$\overline{\varphi}(t)=\overline{\varphi_0}$ for all $t\in[0,T]$.
Furthermore, $\varphi$ satisfies
\begin{align}
&\varphi\in L^{\infty}(0,T;L^p(\Omega)),
\end{align}
where $p\leq 6$ for $d=3$ and $2\leq p<\infty$ for $d=2$. In
addition, for almost any $t>0$, the following energy identity holds
\begin{align}
& \frac{1}{2}\frac{d}{dt} \Vert\varphi\Vert^2+\int_\Omega
m(\varphi)F''(\varphi)|\nabla\varphi|^2 +\int_\Omega a
m(\varphi)|\nabla\varphi|^2 +\int_\Omega
m(\varphi)\big(\varphi\nabla a-\nabla
J\ast\varphi\big)\cdot\nabla\varphi=0.\label{nlocCH7}
\end{align}

\end{thm}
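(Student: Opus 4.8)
The plan is to repeat the construction of Theorem~\ref{Theor2}, simply dropping the Navier--Stokes component and regarding $u$ as a given datum. Concretely, I would introduce the same regularizations $\me$ and $\Fe=\Fie+F_{2\e}$ of the mobility and the potential defined in \eqref{approxpot1}--\eqref{approxmob}, and consider the approximate equation $\phie'+u\cdot\nabla\phie=\mathrm{div}(\me(\phie)\nabla\mue)$, with $\mue=a\phie-J\ast\phie+\Fe'(\phie)$ and $\partial\mue/\partial n=0$. Since $u$ is now prescribed and enjoys the regularity $u\in L^2_{loc}([0,\infty);V_{div}\cap L^\infty(\Omega)^d)$, existence of a weak solution $\phie$ to this non-degenerate, regular problem for each fixed $\e\in(0,\e_0]$ follows from a Faedo--Galerkin scheme exactly as in the $\varphi$-part of the proof of Theorem~\ref{thm}; the argument is in fact simpler here because no coupling with the momentum equation has to be resolved.

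The uniform (in $\e$) estimates are obtained precisely as in Theorem~\ref{Theor2}. Testing the equation by $\mue$ yields the energy bound giving $\|\phie\|_{L^\infty(0,T;H)\cap L^2(0,T;V)}\le C$ together with $\|\sqrt{\me(\phie)}\nabla\mue\|_{L^2(0,T;H)}\le C$, as in \eqref{estep2}--\eqref{estep4}, while testing by $M_\e'(\phie)$ gives the entropy bound \eqref{estep5} on $\int_\Omega M_\e(\phie)$. The only place where the prescribed field enters is the convective term, which is controlled by $|(u\phie,\nabla\psi)|\le\|u\|_{L^\infty}\|\phie\|\,\|\nabla\psi\|$; since $\me$ is uniformly bounded, so that $\|\me(\phie)\nabla\mue\|\le C\|\sqrt{\me(\phie)}\nabla\mue\|$, and since $\|u\|_{L^\infty}\in L^2(0,T)$, a comparison argument in the weak formulation gives the uniform bound $\|\phie'\|_{L^2(0,T;V')}\le C$ in both $d=2$ and $d=3$. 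This $L^2(V')$ control of the time derivative is stronger than the one available in the coupled case for $d=3$, and it is the key feature coming from the $L^\infty$ regularity of $u$.

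Next I would extract, up to subsequences, the analogues of the convergences \eqref{conve5}--\eqref{conve7} for $\phie$ (now with $\phie'\rightharpoonup\varphi_t$ weakly in $L^2(0,T;V')$), deduce $|\varphi|\le1$ a.e.\ in $Q_T$ exactly as in the derivation of \eqref{est25}, and use the strong convergences \eqref{conve8}, \eqref{conve9}, \eqref{conve13}, \eqref{conve14} to pass to the limit in the weak formulation and obtain \eqref{wform1}. Mass conservation $\overline{\varphi}(t)=\overline{\varphi_0}$ follows by choosing the constant test function $\psi\equiv1$, which annihilates every gradient term and leaves $\frac{d}{dt}\int_\Omega\varphi=0$; the regularity $\varphi\in L^\infty(0,T;L^p(\Omega))$, with $p\le6$ for $d=3$ and $p<\infty$ for $d=2$, is then immediate from $|\varphi|\le1$ together with $\varphi\in L^2(0,T;V)$, exactly as for \eqref{phiLq}.

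Finally, to obtain the energy identity \eqref{nlocCH7}, I would take $\psi=\varphi$ as a test function in \eqref{wform1}, which is legitimate since $\varphi\in L^2(0,T;V)$ and $\varphi_t\in L^2(0,T;V')$ give $\langle\varphi_t,\varphi\rangle=\frac12\frac{d}{dt}\|\varphi\|^2$. The crucial simplification is that the convective contribution vanishes: using $\mathrm{div}(u)=0$ and $u=0$ on $\partial\Omega$ one has $(u\varphi,\nabla\varphi)=\frac12\int_\Omega u\cdot\nabla(\varphi^2)=0$, so no term involving the velocity survives, which is exactly why \eqref{nlocCH7} is stated without reference to $u$. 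I expect no serious obstacle beyond this: every genuinely delicate point (the passage to the limit in the degenerate terms and the proof of $|\varphi|\le1$) is inherited verbatim from Theorem~\ref{Theor2}. The one feature worth stressing is that, unlike the coupled system, the energy identity holds in both dimensions, precisely because here it follows from testing a single scalar equation by $\varphi$ with the improved bound $\varphi_t\in L^2(0,T;V')$ afforded by the $L^\infty$ regularity of the prescribed velocity.
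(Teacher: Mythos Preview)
Your overall strategy---reproducing the construction of Theorem~\ref{Theor2} with the Navier--Stokes component removed and $u$ treated as a datum---is exactly the paper's approach, and your handling of the passage to the limit, of $|\varphi|\le 1$, of mass conservation, and of the final energy identity (via $\psi=\varphi$, using that now $\varphi_t\in L^2(0,T;V')$ in both dimensions) is correct.

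There is, however, one point that does not go through ``as in'' the coupled case. You propose to obtain the basic a~priori bounds by testing the approximate equation with $\mue$. In the proof of Theorem~\ref{Theor2} this works because the Cahn--Hilliard test by $\mue$ is paired with the Navier--Stokes test by $\ue$, and the convective contribution $(u\phie,\nabla\mue)$ cancels exactly against the Korteweg term $-(\phie\nabla\mue,u)$. Here the momentum equation is absent, so $(u\phie,\nabla\mue)$ survives and you do not say how to control it; since only $\sqrt{\me(\phie)}\,\nabla\mue$ is bounded and $\me$ is small near the pure phases, a direct absorption is not uniform in~$\e$. (Also, in the degenerate setting the $L^2(0,T;V)$ bound \eqref{estep4} does not follow from the $\mue$-test via \eqref{chemlowbd}; it comes from the entropy test with $M_\e'(\phie)$, as in the proof of Theorem~\ref{Theor2}.) The paper sidesteps this entirely: the Remark after Theorem~\ref{nlocCH1} states that the basic estimates are obtained from \eqref{nlocCH7} written for the approximate solution $\phie$---that is, from testing with $\phie$ rather than with $\mue$---in place of \eqref{eiappeps}. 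This kills the convective term immediately and yields \eqref{estep2}; the entropy test then gives \eqref{estep4} and \eqref{estep5} as before. Your route can still be rescued by observing that, thanks to $\mathrm{div}\,u=0$, one may rewrite $(u\phie,\nabla\mue)=\int_\Omega u\cdot\big(\tfrac12\phie^2\nabla a-\phie(\nabla J\ast\phie)\big)$, which is bounded by $C\|u\|_{L^\infty}\|\phie\|^2$ and closes via Gronwall; but the paper's choice is cleaner and explains why \eqref{nlocCH7} is the natural substitute for the full energy inequality in the decoupled problem.
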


\begin{oss}{\upshape
The basic estimates in the proof of Theorem \ref{nlocCH1}
are now obtained from \eqref{nlocCH7} (written for the approximate solution
$\phie$ and integrated between $0$ and $t$), in place of \eqref{eiappeps}. In particular,
\eqref{nlocCH7} still yields \eqref{estep2}, \eqref{estep4} and \eqref{estep5},
so that we can argue as in the proof of Theorem \ref{Theor3}.}
\end{oss}

For the convective nonlocal Cahn-Hilliard equation with degenerate mobility we can also prove a uniqueness result (see \cite{GL2} for more restrictive assumptions).
We remind that uniqueness of solutions is an open issue for the local case (see \cite{EG}) as well as for the complete
system \eqref{sy1}-\eqref{sy6} even in dimension two.

\begin{prop}\label{nlocCH6}
Let
all the assumptions of Theorem \ref{nlocCH1} and \eqref{cond-1}, \eqref{cond0} be satisfied.
Then, the weak solution to \eqref{sy1}, \eqref{sy2}, \eqref{sy5}$_1$, \eqref{sy6}$_2$ is unique.
\end{prop}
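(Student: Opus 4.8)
The plan is to compare two solutions in the dual norm induced by the inverse Neumann Laplacian $\mathcal{N}$, working throughout with the reformulated flux of \eqref{wform1} (in which $\mu$ never appears). Let $\varphi_1,\varphi_2$ be two weak solutions (in the sense of Definition \ref{wsdeg}, restricted to the Cahn--Hilliard equation) associated with the same $u$ and the same $\varphi_0$, and set $\varphi:=\varphi_1-\varphi_2$. Since $\overline{\varphi_i}(t)=\overline{\varphi_0}$ is conserved, $\overline{\varphi}(t)=0$, so $\varphi(t)\in V_0'$ and $\mathcal{N}\varphi(t)\in V_0\subset V$ is an admissible test function; recalling $\varphi_t\in L^2(0,T;V')$, I would subtract the two copies of \eqref{wform1}, test with $\psi=\mathcal{N}\varphi$, and use $\langle\varphi_t,\mathcal{N}\varphi\rangle=\tfrac12\frac{d}{dt}\Vert\varphi\Vert_*^2$, where $\Vert v\Vert_*^2:=\langle v,\mathcal{N}v\rangle=\Vert\nabla\mathcal{N}v\Vert^2$. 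This yields $\tfrac12\frac{d}{dt}\Vert\varphi\Vert_*^2+\int_\Omega[\Lambda(\varphi_1)-\Lambda(\varphi_2)]\cdot\nabla\mathcal{N}\varphi=(u\varphi,\nabla\mathcal{N}\varphi)$, with $\Lambda(\varphi)=m(\varphi)(F''(\varphi)+a)\nabla\varphi+m(\varphi)\varphi\nabla a-m(\varphi)(\nabla J\ast\varphi)$.

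The crucial step is to extract coercivity from the leading diffusion part in spite of the degeneracy of $m$ and the sign-indefiniteness of $F_2''$. To this end I would introduce the primitive $\Xi(x,s):=\int_0^s m(\tau)\big(F''(\tau)+a(x)\big)\,d\tau$, so that $\partial_s\Xi(x,s)=m(s)(F''(s)+a(x))$ and $\nabla_x\big[\Xi(x,\varphi)\big]=m(\varphi)(F''(\varphi)+a)\nabla\varphi+\Theta(\varphi)\nabla a$, where $\Theta(s):=\int_0^s m$. Combining \eqref{cond0} with \eqref{cond-1} multiplied by $m\geq0$ gives $m(s)(F''(s)+a(x))=m(s)F_1''(s)+m(s)(F_2''(s)+a(x))\geq(1-\rho)m(s)F_1''(s)\geq(1-\rho)\alpha_0$ uniformly in $x$, i.e.\ $\Xi(x,\cdot)$ is $(1-\rho)\alpha_0$-strongly monotone. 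Writing $m(\varphi)(F''(\varphi)+a)\nabla\varphi=\nabla_x[\Xi(x,\varphi)]-\Theta(\varphi)\nabla a$ and integrating by parts (the boundary term vanishing since $\partial_n\mathcal{N}\varphi=0$ and $A\mathcal{N}\varphi=\varphi$), the leading contribution becomes $\int_\Omega\big(\Xi(x,\varphi_1)-\Xi(x,\varphi_2)\big)\varphi\geq(1-\rho)\alpha_0\Vert\varphi\Vert^2$.

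All remaining contributions should be absorbed as lower-order. The spurious $\Theta(\varphi)\nabla a$ term, the transport term $m(\varphi)\varphi\nabla a$, the convolution term $m(\varphi)\nabla J\ast\varphi$ and the convective term $u\varphi$ are each paired with $\nabla\mathcal{N}\varphi$; using that $m$, $s\mapsto m(s)s$ and $\Theta$ are Lipschitz on $[-1,1]$ (here $m\in C^1$), that $|\varphi_i|\leq1$ together with (H2) give $\Vert\nabla J\ast\varphi_2\Vert_{L^\infty}\leq b$ and $\Vert\nabla a\Vert_{L^\infty}\leq b$, that Young's convolution inequality gives $\Vert\nabla J\ast\varphi\Vert\leq b\Vert\varphi\Vert$, and that $u\in L^2(0,T;L^\infty)$, each such term can be bounded by $\epsilon\Vert\varphi\Vert^2+C(t)\Vert\varphi\Vert_*^2$ with $C\in L^1(0,T)$ (the only genuinely time-dependent weight being $\Vert u(t)\Vert_{L^\infty}^2$). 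Choosing $\epsilon$ small enough to be absorbed into $(1-\rho)\alpha_0\Vert\varphi\Vert^2$, I would arrive at $\frac{d}{dt}\Vert\varphi\Vert_*^2\leq C(t)\Vert\varphi\Vert_*^2$; since $\Vert\varphi(0)\Vert_*=0$, Gronwall's lemma forces $\varphi\equiv0$, whence uniqueness.

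I expect the main obstacle to be exactly this coercivity mechanism. Because $m$ degenerates, $\nabla\mu$ is not controlled in any $L^p$ and one cannot simply test with $\varphi$, so the estimate must be carried out in the weaker $V_0'$ metric and must rely on the reformulated flux. The delicate point is that the nonlinear coefficients are merely continuous or Lipschitz (no regularity of $F''$ beyond that of Theorem \ref{nlocCH1} is available here), which is precisely why incorporating the spatially varying diffusivity into the single primitive $\Xi(x,\cdot)$ and reading off its strong monotonicity directly from \eqref{cond-1}--\eqref{cond0} is the right device, rather than trying to differentiate the coefficient $m(\cdot)(F''(\cdot)+a)$.
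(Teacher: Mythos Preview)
Your proposal is correct and follows essentially the same route as the paper: the paper introduces the scalar primitive $\Lambda(x,s)=\int_0^s m(\sigma)\big(F''(\sigma)+a(x)\big)\,d\sigma$ (your $\Xi$) and $\Gamma(s)=\int_0^s m$ (your $\Theta$), rewrites the flux accordingly, tests the difference of two solutions with $\mathcal{N}\varphi$, extracts the coercivity $(\Lambda(\cdot,\varphi_1)-\Lambda(\cdot,\varphi_2),\varphi)\geq(1-\rho)\alpha_0\Vert\varphi\Vert^2$ from \eqref{cond-1}--\eqref{cond0} exactly as you do, bounds the remaining terms using the Lipschitz behaviour of $m$, $\Gamma$ on $[-1,1]$, the bound $|\varphi_i|\leq1$, (H2), and $u\in L^2(0,T;L^\infty)$, and closes with Gronwall in the $\Vert\mathcal{N}^{1/2}\cdot\Vert$ norm. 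The only cosmetic difference is that the paper separates the $\Gamma(\varphi)\nabla a$ contribution already at the level of the reformulated weak equation rather than recovering it through your chain-rule identity for $\nabla_x\Xi(x,\varphi)$.
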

\begin{proof}
Following \cite{GL2}, let us introduce
\begin{align}
&\Lambda_1(s):=\int_0^s m(\sigma)F_1''(\sigma)d\sigma,\qquad\Lambda_2(s):=\int_0^s m(\sigma)F_2''(\sigma)d\sigma,\qquad\Gamma(s):=\int_0^s m(\sigma)d\sigma,
\nonumber
\end{align}
for all $s\in[-1,1]$.
Due to (A1) and \eqref{cond0} we have $\Lambda_1\in C^1([-1,1])$ and
$0<\alpha_0\leq \Lambda_1'(s)\leq a$ for some positive constant $a$.
Then, it is easy to see that the weak formulation \eqref{wform1} can be rewritten as follows
\begin{align}
&\langle\varphi_t,\psi\rangle+\big(\nabla\Lambda(\cdot,\varphi),\nabla\psi\big)
-\big(\Gamma(\varphi)\nabla a,\nabla\psi\big)
+\big( m(\varphi)(\varphi\nabla a-\nabla J\ast\varphi),\nabla\psi\big)
=\big(u\varphi,\nabla\psi\big),
\label{nlocCH2}
\end{align}
for all $\psi\in V$, where
$$\Lambda(x,s):=\Lambda_1(s)+\Lambda_2(s)+a(x)\Gamma(s).$$
Consider now \eqref{nlocCH2} for two weak solutions $\varphi_1$ and $\varphi_2$
\color{black} corresponding to the same initial datum. \color{black}
Let us take the difference between the two identities,
set $\varphi:=\varphi_1-\varphi_2$ and
choose $\psi=\mathcal{N}\varphi$ as test function
in the resulting identity (notice that $\overline{\varphi}=0$). This yields
\begin{align}
&\frac{1}{2}\frac{d}{dt}\Vert\mathcal{N}^{1/2}\varphi\Vert^2
+\big(\Lambda(\varphi_2)-\Lambda(\varphi_1),\varphi\big)
-\big((\Gamma(\varphi_2)-\Gamma(\varphi_1))\nabla a,\nabla\mathcal{N}\varphi\big)\nonumber\\
&+\big((m(\varphi_2)-m(\varphi_1))(\varphi_2\nabla a-\nabla J\ast\varphi_2)+m(\varphi_1)(\varphi\nabla a
-\nabla J\ast\varphi),\nabla\mathcal{N}\varphi\big)\nonumber\\
&=\big(u\varphi,\nabla\mathcal{N}\varphi\big).\label{nlocCH3}
\end{align}
On account of  \eqref{cond-1} and \eqref{cond0}, we find
\begin{align}
&\big(\Lambda(\cdot,\varphi_2)-\Lambda(\cdot,\varphi_1),\varphi\big)\geq
(1-\rho)\int_\Omega m(\theta\varphi_2+(1-\theta)\varphi_1)F_1''(\theta\varphi_2+(1-\theta)\varphi_1)
\varphi^2\nonumber\\
&\geq (1-\rho)\alpha_0\Vert\varphi\Vert^2.\label{nolcCH4}
\end{align}
Furthermore, since $|\varphi_1|,|\varphi_2|\leq 1$, then we have
\begin{align}
&\big|\big((\Gamma(\varphi_2)-\Gamma(\varphi_1))\nabla a,\nabla\mathcal{N}\varphi\big)\big|
\leq m_\ast b \Vert\varphi\Vert\Vert\nabla\mathcal{N}\varphi\Vert\leq
\frac{1}{8} (1-\rho)\alpha_0\Vert\varphi\Vert^2+C_1\Vert\nabla\mathcal{N}\varphi\Vert^2,\\
&\big|(m(\varphi_2)-m(\varphi_1))(\varphi_2\nabla a-\nabla J\ast\varphi_2),\nabla\mathcal{N}\varphi\big)\big|
\leq 2m_{\ast\ast}b\Vert\varphi\Vert
\Vert\nabla\mathcal{N}\varphi\Vert\nonumber\\
&\leq\frac{1}{8} (1-\rho)\alpha_0\Vert\varphi\Vert^2+C_2\Vert\nabla\mathcal{N}\varphi\Vert^2,\\
&\big|\big(m(\varphi_1)(\varphi\nabla a
-\nabla J\ast\varphi),\nabla\mathcal{N}\varphi\big)\big|\leq 2m_\ast b\Vert\varphi\Vert\Vert\nabla\mathcal{N}\varphi\Vert\nonumber\\
&\leq \frac{1}{8} (1-\rho)\alpha_0\Vert\varphi\Vert^2+C_3\Vert\nabla\mathcal{N}\varphi\Vert^2,                                                              \\
&\big|\big(u\varphi,\nabla\mathcal{N}\varphi\big)\big|
\leq \Vert u\Vert_{L^\infty(\Omega)^d}\Vert\varphi\Vert\Vert\mathcal{N}^{1/2}\varphi\Vert\nonumber\\
&\leq\frac{1}{8} (1-\rho)\alpha_0\Vert\varphi\Vert^2+C_4\Vert u\Vert_{L^\infty(\Omega)^d}^2\Vert\mathcal{N}^{1/2}\varphi\Vert^2,
\label{nlocCH5}
\end{align}
where $b$ is given as in (H2), $m_\ast$  as in the proof of Proposition \ref{dissest},
$m_{\ast\ast}:=\max_{s\in[-1,1]} m'(s)$ and the positive constants $C_1,..,C_4$
depend on $\alpha_0,\rho,m_\ast,m_{\ast\ast}$.
By plugging \eqref{nolcCH4}--\eqref{nlocCH5} into \eqref{nlocCH3} we get
\begin{align}
&\frac{d}{dt}\Vert\mathcal{N}^{1/2}\varphi\Vert^2+(1-\rho)\alpha_0\Vert\varphi\Vert^2
\leq C_5\big(1+\Vert u\Vert_{L^\infty(\Omega)^d}^2\big)\Vert\mathcal{N}^{1/2}\varphi\Vert^2
\end{align}
and Gronwall's lemma applies.
\end{proof}
\color{black}

As a consequence of Theorem \ref{nlocCH1} and of Proposition \ref{nlocCH6}
we can define a semiflow $S(t)$ on $\mathcal{Y}_{m_0}$ (cf. \eqref{phasesp2}), $m_0\in[0,1]$,
endowed with the metric induced by the $L^2-$norm. It is then immediate to check that the arguments used in the proofs
of Proposition \ref{gensemifl}, Proposition \ref{dissest}
and Proposition \ref{attractor} can be adapted to the present situation.
This gives
\begin{thm}
\label{attrnlch}
Let (A1)-(A4), \eqref{cond-1} and \eqref{cond0} hold.
Suppose that  $u\in L^\infty(\Omega)^d\color{black}\cap V_{div} \color{black}$ is given and independent of time.
Then the dynamical system $\big(\mathcal{Y}_{m_0},S(t)\big)$
possesses a connected global attractor.
\end{thm}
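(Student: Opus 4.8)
The plan is to deduce from Theorem~\ref{nlocCH1} and Proposition~\ref{nlocCH6} that $S(t)$ is a single valued, continuous semigroup on $\mathcal{Y}_{m_0}$, to show that it is point dissipative, eventually bounded and compact by adapting the proofs of Propositions~\ref{gensemifl}, \ref{dissest} and~\ref{attractor}, and finally to read off connectedness from the structure of the phase space. For every $\varphi_0\in\mathcal{Y}_{m_0}$ Theorem~\ref{nlocCH1} furnishes a weak solution satisfying \eqref{reg4}--\eqref{reg6}, the mass conservation $\overline{\varphi}(t)=\overline{\varphi_0}$ and the energy identity \eqref{nlocCH7}, while Proposition~\ref{nlocCH6} guarantees uniqueness; hence $S(t)\varphi_0:=\varphi(t)$ is single valued and, by uniqueness, enjoys the semigroup property. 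Mass conservation together with $|\varphi(t)|\le 1$ (cf. \eqref{reg6}) shows $S(t)\mathcal{Y}_{m_0}\subseteq\mathcal{Y}_{m_0}$ for $d=2,3$. Note that, since $u$ is divergence free and time independent, testing \eqref{wform1} with $\psi=\varphi$ makes the convective contribution $(u\varphi,\nabla\varphi)$ vanish, so \eqref{nlocCH7} carries no term in $u$; this renders the dissipation estimate below independent of the velocity.

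For the dissipativity I would argue as in the proof of Proposition~\ref{dissest}, but starting from \eqref{nlocCH7} in place of \eqref{energeq}. Using \eqref{cond-1} and \eqref{cond0} one bounds $\int_\Omega m(\varphi)F''(\varphi)|\nabla\varphi|^2+\int_\Omega a\,m(\varphi)|\nabla\varphi|^2\ge(1-\rho)\alpha_0\|\nabla\varphi\|^2$, while the term $\int_\Omega m(\varphi)(\nabla J\ast\varphi-\varphi\nabla a)\cdot\nabla\varphi$ is controlled, thanks to (H2) and $|\varphi|\le 1$, by absorbing a fraction of $\|\nabla\varphi\|^2$. This yields
\begin{align}
&\frac{d}{dt}\|\varphi\|^2+(1-\rho)\alpha_0\|\nabla\varphi\|^2\le C.\nonumber
\end{align}
Writing $\|\varphi\|^2=\|\varphi-\overline{\varphi_0}\|^2+|\overline{\varphi_0}|^2|\Omega|$, invoking the Poincar\'e--Wirtinger inequality and Gronwall's lemma then gives a dissipative estimate of the form $\|\varphi(t)-\overline{\varphi_0}\|^2\le\|\varphi_0-\overline{\varphi_0}\|^2e^{-\eta t}+C$, with $\eta,C>0$ independent of $m_0$, whence point dissipativity and eventual boundedness of $(\mathcal{Y}_{m_0},S(t))$.

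The technical heart, and the step I expect to be the main obstacle, is the compactness together with the continuous dependence, which I would obtain by Ball's energy method precisely as in Propositions~\ref{gensemifl} and~\ref{attractor}. Given a sequence $\varphi_{0j}$ bounded in $\mathcal{Y}_{m_0}$ (or converging to $\varphi_0$ in $H$), the dissipative estimate provides uniform bounds of $\varphi_j$ in $L^\infty(0,T;H)\cap L^2(0,T;V)$ and, by comparison in \eqref{wform1} using $u\in L^\infty(\Omega)^d$, $|mF''|\le a$ and $|\varphi_j|\le 1$, of $\varphi_j'$ in $L^2(0,T;V')$; standard compactness yields a subsequence with $\varphi_j\to\varphi$ strongly in $L^2(0,T;H)$ and a.e., weakly in $L^2(0,T;V)$, and $\varphi_j(t)\rightharpoonup\varphi(t)$ in $H$ for every $t\ge 0$, the limit being the unique solution $S(\cdot)\varphi_0$ by the limit passage of Theorem~\ref{Theor2}. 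To upgrade to strong convergence for every $t$ I would use the nonincreasing modified energy
\begin{align}
&\widetilde{E}\big(\varphi_j(t)\big):=\frac{1}{2}\|\varphi_j(t)\|^2-\int_0^t\int_\Omega m(\varphi_j)\big(\nabla J\ast\varphi_j-\varphi_j\nabla a\big)\cdot\nabla\varphi_j,\nonumber
\end{align}
whose monotonicity follows from \eqref{nlocCH7} with \eqref{cond-1}--\eqref{cond0}. The strong $L^s(Q_T)$ convergence of $\varphi_j$ (hence of $m(\varphi_j)$ and of $a\varphi_j-J\ast\varphi_j$) for all $s\in[2,\infty)$, combined with the weak convergence of $\nabla\varphi_j$, makes the integral term converge, so $\widetilde{E}(\varphi_j(t))\to\widetilde{E}(\varphi(t))$ for a.e. $t$; since $\widetilde{E}(\varphi_j(\cdot))$ is nonincreasing and both $\widetilde{E}(\varphi_j(\cdot))$ and $\widetilde{E}(\varphi(\cdot))$ are continuous, the convergence holds for every $t\ge 0$, whence $\|\varphi_j(t)\|\to\|\varphi(t)\|$. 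Together with the weak convergence this forces $\varphi_j(t)\to\varphi(t)$ in $\mathcal{Y}_{m_0}$ for all $t\ge 0$, which simultaneously proves continuity of $S(t)$ and the compactness required by \cite[Proposition~3.2]{Ba} and \cite[Theorem~3.3]{Ba}; hence $(\mathcal{Y}_{m_0},S(t))$ possesses a global attractor $\mathcal{A}$.

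Finally, connectedness — the feature absent from Proposition~\ref{attractor} — follows here from $S(t)$ being single valued. Indeed $\mathcal{Y}_{m_0}$ is bounded in $H$ (as $|\varphi|\le 1$) and convex: the constraints $|\varphi|\le 1$ and $|\overline{\varphi}|\le m_0$ are convex, and $F(\varphi)\in L^1(\Omega)$ is stable under convex combinations because, by \eqref{cond0}, the singular part $F_1$ is convex on $(-1,1)$ and bounded below, while $F_2\in C^2([-1,1])$ is bounded. Thus $\mathcal{Y}_{m_0}$ is a bounded, connected, invariant absorbing set, and since $S(t)$ is continuous the global attractor $\mathcal{A}=\bigcap_{\tau\ge 0}\overline{\bigcup_{t\ge\tau}S(t)\mathcal{Y}_{m_0}}$ is a nested intersection of connected sets, hence connected.
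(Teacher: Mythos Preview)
Your proposal is correct and follows essentially the same approach as the paper, which simply states that the arguments of Propositions~\ref{gensemifl}, \ref{dissest} and~\ref{attractor} can be adapted to the single-equation setting; you have carried out that adaptation in detail. The one point on which you go beyond the paper is the connectedness of the attractor: the paper does not spell this out, and your argument via convexity of $\mathcal{Y}_{m_0}$ (using that $F_1$ is convex by \eqref{cond0} and $F_2$ is bounded) together with continuity of the single-valued semigroup is the right way to see it.
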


\begin{oss}
{\upshape The fact $u$ must be divergence-free is not necessary (see also \cite{FG1,FG2}).
In particular, the convective term can be of the form $\nabla\cdot(u \varphi)$.}
\end{oss}

\begin{oss}
{\upshape {\color{black} In \cite[Section 4]{GG4} the case $u=0$ was considered. In particular,
the existence of an exponential attractor was proven and, as a consequence,
the existence of a global attractor of finite fractal dimension. However, contrary to the present
case, in \cite{GG4} pure phases were a priori excluded from the phase space (cf. also Remark \ref{purephases}).}}
\end{oss}

\color{black}
\textbf{Acknowledgments}. The authors would like to thank Ciprian G. Gal for a useful remark concerning
the first part of the proof of Theorem \ref{Theor3}. The authors are members of the Gruppo Nazionale per l'Analisi Matematica, la Probabilit\`{a} e le loro Applicazioni (GNAMPA) of the Istituto Nazionale di Alta Matematica (INdAM).
\color{black}

\end{document}